\definecolor{labelkey}{rgb}{0,0.08,0.45}
\definecolor{refkey}{rgb}{0,0.6,0.0}
\definecolor{Brown}{rgb}{0.45,0.0,0.05}
\definecolor{lime}{rgb}{0.00,0.8,0.0}
\definecolor{lblue}{rgb}{0.5,0.5,0.99}
\setlist[enumerate]{nosep}
\definecolor{myblue}{rgb}{.9, .9, 1} 
  \newcommand*\mybluebox[1]{%
    \colorbox{myblue}{\hspace{1em}#1\hspace{1em}}}
\newcommand{\sepp}{\setlength{\itemsep}{-2pt}}
\newcommand{\menge}[2]{\{{#1}~\big |~{#2}\}}
\newcommand{\ball}[2]{\operatorname{ball}\left({#1};{#2}\right)}
\newcommand{\scal}[2]{\left\langle {#1},{#2} \right\rangle}
\newcommand{\To}{\ensuremath{\rightrightarrows}}
\newcommand{\NN}{\ensuremath{\mathbb N}}
\newcommand{\nnn}{\ensuremath{{n\in{\mathbb N}}}}
\newcommand{\RR}{\ensuremath{\mathbb R}}
\newcommand{\RP}{\ensuremath{\mathbb{R}_+}}
\newcommand{\RPP}{\ensuremath{\mathbb{R}_{++}}}
\newcommand{\RM}{\ensuremath{\mathbb{R}_-}}
\newcommand{\argmin}{\ensuremath{\operatorname*{argmin}}}
\newcommand{\bd}{\ensuremath{\operatorname{bdry}}}
\newcommand{\inte}{\ensuremath{\operatorname{int}}}
\newcommand{\reli}{\ensuremath{\operatorname{ri}}}
\newcommand{\cone}{\ensuremath{\operatorname{cone}}}
\newcommand{\aff}{\ensuremath{\operatorname{aff}}}
\newcommand{\lspan}{\ensuremath{{\operatorname{span}}\,}}
\newcommand{\ran}{\ensuremath{\operatorname{ran}}}
\newcommand{\epi}{\ensuremath{\operatorname{epi}}}
\newcommand{\Fix}{\ensuremath{\operatorname{Fix}}}
\newcommand{\Id}{\ensuremath{\operatorname{Id}}}
\crefname{equation}{}{equations}
\crefname{chapter}{Appendix}{chapters}
\crefname{item}{}{items}
\crefname{figure}{Figure}{figures}
\def\namedlabel#1#2{\begingroup
   \def\@currentlabel{#2}%
   \label{#1}\endgroup
}
\def\th@plain{%
  \thm@notefont{}
  \itshape 
}
\def\th@definition{%
  \thm@notefont{}
  \normalfont 
}
\newtheorem{theorem}{Theorem}[section]
\newtheorem{lemma}[theorem]{Lemma}
\newtheorem{corollary}[theorem]{Corollary}
\newtheorem{proposition}[theorem]{Proposition}
\newtheorem{fact}[theorem]{Fact}
\theoremstyle{definition}
\newtheorem{definition}[theorem]{Definition}
\theoremstyle{definition}
\newtheorem{example}[theorem]{Example}
\theoremstyle{definition}
\newtheorem{remark}[theorem]{Remark}
\begin{document}

\title{On the finite convergence of the Douglas--Rachford algorithm
for solving (not necessarily convex) feasibility problems in
Euclidean spaces}

\author{
Heinz H.\ Bauschke\thanks{
Mathematics, University of British Columbia, Kelowna, B.C.\ V1V~1V7, Canada. 
E-mail: \texttt{heinz.bauschke@ubc.ca}.}~~~and~
Minh N.\ Dao\thanks{
Mathematics, University of British Columbia, Kelowna, B.C.\ V1V~1V7, Canada,
and Department of Mathematics and Informatics, Hanoi National University of Education, 136 Xuan Thuy, Hanoi, Vietnam.
E-mail: \texttt{minhdn@hnue.edu.vn}.}
}

\date{April 15, 2016}

\maketitle

\begin{abstract} \noindent
Solving feasibility problems is a central
task in mathematics and the applied sciences.
One particularly successful method is the Douglas--Rachford
algorithm. 
In this paper, we provide many new conditions
sufficient for \emph{finite} convergence. 
Numerous examples illustrate our results. 
\end{abstract}

{\small
\noindent
{\bfseries 2010 Mathematics Subject Classification:}
{Primary 47H09, 90C25; 
Secondary 47H05, 49M27, 65F10, 65K05, 65K10. 
}

\noindent {\bfseries Keywords:}
averaged alternating reflections,
Douglas--Rachford algorithm, 
feasibility problem, 
finite convergence, 
projector,
reflector,
}

\section{Introduction}

The Douglas--Rachford algorithm (DRA) was first introduced in \cite{DR56} 
as an operator splitting technique to solve partial differential equations arising in heat conduction. 
As a result of findings by Lions and Mercier \cite{LM79} in the monotone operator
setting, the method has been extended to find solutions of the
sum of two maximally monotone operators. 
When specialized to normal cone operators, the method is very
useful in solving feasibility problems. To fix our setting, 
we assume throughout that
\begin{empheq}[box=\mybluebox]{equation}
\text{$X$ is a Euclidean space,}
\end{empheq}
i.e, a finite-dimensional real Hilbert space with inner product $\scal{\cdot}{\cdot}$ and induced norm $\|\cdot\|$.
Given closed subsets $A$ and $B$ of $X$ with nonempty intersection, 
we consider the fundamental feasibility problem  
\begin{empheq}[box=\mybluebox]{equation}
\label{e:prob}
\text{find a point in $A\cap B$}
\end{empheq}
which 
frequently arises in science and engineering applications.
A common approach for solving \eqref{e:prob} is to use projection algorithms
that employ projectors onto the underlying sets; see, e.g., 
\cite{BB96}
\cite{BC11},
\cite{Cegielski}, 
\cite{CenZen},
\cite{C3}, 
\cite{Comb95}, 
\cite{Comb97},
\cite{Herman},
and the references therein.
Among those algorithms, the Douglas--Rachford algorithm 
applied to \eqref{e:prob} 
has attracted much attention; 
see, e.g., \cite{ABT14} and \cite{Comb04} 
and the references therein for further information.

In the convex case, it is known, see, e.g., Lions and Mercier \cite{LM79} and Svaiter \cite{Sva11}, that  
the sequence generated by the DRA always converges 
while the ``shadow sequence'' converges to a point of the intersection.
Even when the convex feasibility problem is inconsistent, i.e., $A\cap B =\varnothing$,
it was shown in \cite{BCL04} that the ``shadow sequence'' is bounded
and its cluster points solve a best approximation problem;
the entire sequence converges if one of the sets is an affine subspace
\cite{BDM16}.

Although the Douglas--Rachford algorithm has been applied successfully to various problems involving
one or more nonconvex sets, the theoretical justification is far from complete. 
Recently, in the case of a Euclidean sphere and a line, Borwein
and Sims
\cite{BS11} have proved local convergence of the DRA at points of the intersection,  
while Arag\'on Artacho and Borwein
\cite{AB13} have given a region of convergence for this model in
the plane; moreover, 
Benoist \cite{Ben15} has even shown that the DRA sequence converges in norm to a point of the intersection 
except when the starting point belongs to the hyperplane of symmetry.
In another direction, \cite{BN14} proved local convergence for finite unions of convex sets.

On the convergence rate, it has been shown by Hesse, Luke and
Neumann \cite{HLN14} that the DRA for two
subspaces converges linearly.
Furthermore, the rate is then actually the cosine of the Friedrichs angle
between the subspaces \cite{BBNPW14}.  In the potentially nonconvex
case, under transversality assumptions,
Hesse and Luke \cite{HL13} proved local linear convergence of the DRA for a superregular set and an affine subspace,
while Phan \cite{Pha16} obtained such a rate for two super-regular sets.
Specialized to the convex setting, the result in \cite{Pha16} implies linear convergence of the DRA 
for two convex sets whose the relative interiors have a nonempty intersection; see also \cite{BNP15}. 
It is worth mentioning that the linear convergence of the DRA may fail 
even for simple settings in the Euclidean plane, as shown in \cite{BDNP16}. 
Based on H\"older regularity properties, Borwein, Li, and Tam 
\cite{BLT15} established sublinear convergence 
for two convex basic semi-algebraic sets. 
For the linear convergence of the DRA in the framework of optimization problems involving a sum of two functions,
we refer the reader to, e.g., Giselsson's \cite{Gis15a}, \cite{Gis15b}, 
Li and Pong \cite{LP15}, Liang, Faili, Peyr\'e, and Luke \cite{LPFL15}, 
Patrinos, Stella, and Bemporad's \cite{PSB14}, and the references therein.

Davis and Yin \cite{DY15} observed that the DRA may converge arbitrarily
slowly in infinite dimensions; however, in finite
dimensions, 
it often works extremely well. 
Very recently, the globally finite convergence of the DRA has been shown 
in \cite{BDNP15} for an affine subspace and a locally polyhedral set, or for a hyperplane and an epigraph, 
and then by Arag\'on Artacho, Borwein, and Tam \cite{ABT15} 
for a finite set and a halfspace.

\emph{The goal of this paper is to provide various finite-convergence results.}
The sufficient conditions we present are new
and complementary to existing conditions.

After presenting useful results on projectors and the DRA
(Section~\ref{s:aux})
and on locally identical sets (Section~\ref{s:lis}), we specifically derive 
results related to the following five scenarios:
\begin{itemize} 
\sepp
\item[\bf R1]
$A$ is a halfspace and $B$ is an epigraph of a convex function;
$A$ is either a hyperplane or a halfspace, and $B$ is a halfspace
(see Section~\ref{s:4}).
\item[\bf R2]
$A$ and $B$ are supersets or modifications of other sets where the DRA is better
understood 
(see Section~\ref{s:expand}).
\item[\bf R3]
$A$ and $B$ are subsets of other sets where the DRA is better
understood 
(see Section~\ref{s:shrink}).
\item[\bf R4]
$B$ is a finite, hence nonconvex, set
(see Section~\ref{s:finiteset}). 
\item[\bf R5]
$A$ is an affine subspace and $B$ is a polyhedron
in the absence of Slater's condition
(see Section~\ref{s:8}).
\end{itemize}
The paper concludes with a list of open problem in
Section~\ref{s:open}. 

Before we start our analysis, let us note that our 
notation and terminology is standard and follows, e.g., \cite{BC11}.
The nonnegative integers are $\NN$, and the real numbers are $\RR$, 
while $\RP := \menge{\alpha \in \RR}{\alpha \geq 0}$, $\RPP := \menge{\alpha \in \RR}{\alpha >0}$, 
and $\RM := \menge{\alpha \in \RR}{\alpha \leq 0}$.
Let $C$ be a subset of $X$. 
Then the closure of $C$ is $\overline C$, the interior of $C$ is $\inte C$, the boundary of $C$ is $\bd C$,
and the smallest affine and linear subspaces containing $C$ are,
respectively, $\aff C$ and $\lspan C$. 
The relative interior of $C$, $\reli C$, is the interior of $C$ relative to $\aff C$.
The smallest convex cone containing $C$ is $\cone C$,
the orthogonal complement of $C$ is $C^\perp :=\menge{y \in X}{(\forall x\in C)\; \scal{x}{y} =0}$,
and the dual cone of $C$ is $C^\oplus :=\menge{y \in X}{(\forall x \in C)\; \scal{x}{y} \geq 0}$.
The normal cone operator of $C$ is denoted by $N_C$, i.e.,
$N_C(x) =\menge{y \in X}{(\forall c \in C)\; \scal{y}{c-x} \leq 0}$ if $x \in C$, and $N_C(x) =\varnothing$ otherwise.
If $x \in X$ and $\rho \in \RPP$, 
then $\ball{x}{\rho} :=\menge{y \in X}{\|x -y\| \leq \rho}$ 
is the closed ball centered at $x$ with radius $\rho$.

\section{Auxiliary results}

\label{s:aux}

For the reader's convenience we recall in this section preliminary concepts and auxiliary results 
which are mostly well known and which will be useful later.

Let $A$ be a nonempty closed subset of $X$. The \emph{distance function} of $A$ is 
\begin{equation}
d_A\colon X \to \RR\colon x \mapsto \min_{a \in A} \|x -a\|.
\end{equation}
The \emph{projector} onto $A$ is the mapping
\begin{equation}
P_A\colon X \To A\colon x \mapsto \argmin_{a \in A} \|x -a\| =\menge{a \in A}{\|x -a\| =d_A(x)},
\end{equation}
and the \emph{reflector} across $A$ is defined by 
\begin{equation}
R_A :=2P_A -\Id,
\end{equation}
where $\Id$ is the identity operator. 
Note that closedness of the set $A$ is necessary and sufficient for $A$ to be proximinal, i.e., $(\forall x \in X)$ $P_Ax \neq\varnothing$ (see, e.g., \cite[Corollary~3.13]{BC11}). In the following, we shall write $P_Ax =a$ if $P_Ax =\{a\}$ is a singleton.

\begin{fact}[Projection onto a convex set]
\label{f:proj}
Let $A$ be a nonempty closed convex subset of $X$, 
and let $x$ and $p$ be in $X$. Then the following hold:
\begin{enumerate}
\item 
\label{f:proj_cvx} 
$P_A$ is single-valued and 
\begin{equation}
\label{e:proj_cvx}
p =P_Ax \quad\Leftrightarrow\quad \left[ p\in A \text{~and~} (\forall y \in A)\; \scal{x -p}{y -p}\leq 0 \right]
\quad\Leftrightarrow\quad x -p \in N_A(p).
\end{equation}
\item
\label{f:proj_firm}
$P_A$ is \emph{firmly nonexpansive}, i.e., 
\begin{equation}
(\forall x \in X)(\forall y \in X)\quad \|P_Ax -P_Ay\|^2 +\|(\Id -P_A)x -(\Id -P_A)y\|^2 \leq \|x -y\|^2.
\end{equation}
\item
\label{f:proj_nonex}
$R_A$ is \emph{nonexpansive}, i.e., 
\begin{equation}
(\forall x \in X)(\forall y \in X)\quad \|R_Ax -R_Ay\| \leq \|x -y\|.
\end{equation}
\end{enumerate} 
In particular, $P_A$ and $R_A$ are continuous on $X$.
\end{fact}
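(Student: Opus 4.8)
The plan is to prove Fact~\ref{f:proj} by establishing the characterization \eqref{e:proj_cvx} first, since firm nonexpansiveness and the nonexpansiveness of the reflector follow from it by standard Hilbert-space manipulations. For \ref{f:proj_cvx}, I would fix $x\in X$ and use the Hilbert projection theorem: because $A$ is nonempty, closed, and convex in the (finite-dimensional, hence complete) space $X$, the strictly convex function $a\mapsto\|x-a\|^2$ attains its infimum at a unique point, so $P_Ax$ is a singleton. To get the variational inequality, I would take $p=P_Ax$ and, for arbitrary $y\in A$, exploit convexity: $p+t(y-p)\in A$ for all $t\in[0,1]$, so the function $\varphi(t)=\|x-p-t(y-p)\|^2$ is minimized at $t=0$; computing $\varphi'(0^+)\geq 0$ yields $\scal{x-p}{y-p}\leq 0$. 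Conversely, if $p\in A$ satisfies this inequality, expanding $\|x-y\|^2=\|x-p\|^2+2\scal{x-p}{p-y}+\|p-y\|^2\geq\|x-p\|^2$ for every $y\in A$ shows $p=P_Ax$. The equivalence with $x-p\in N_A(p)$ is then just the definition of the normal cone rewritten with $c=y$.

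Next, for \ref{f:proj_firm}, I would set $p=P_Ax$, $q=P_Ay$, and apply the variational inequality \eqref{e:proj_cvx} twice: $\scal{x-p}{q-p}\leq 0$ and $\scal{y-q}{p-q}\leq 0$. Adding these gives $\scal{(x-p)-(y-q)}{q-p}\leq 0$, i.e. $\scal{(x-y)-(P_Ax-P_Ay)}{P_Ax-P_Ay}\geq 0$. Writing $u=P_Ax-P_Ay$ and $v=(\Id-P_A)x-(\Id-P_A)y$, so that $u+v=x-y$, this says $\scal{u}{v}\geq 0$; hence $\|x-y\|^2=\|u+v\|^2=\|u\|^2+2\scal{u}{v}+\|v\|^2\geq\|u\|^2+\|v\|^2$, which is exactly the claimed inequality. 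For \ref{f:proj_nonex}, I would combine firm nonexpansiveness with the identity $\|R_Ax-R_Ay\|^2=\|2u-(x-y)\|^2=\|u-v\|^2=\|u\|^2-2\scal{u}{v}+\|v\|^2$; since $\scal{u}{v}\geq 0$, this is at most $\|u\|^2+2\scal{u}{v}+\|v\|^2=\|x-y\|^2$. Finally, continuity of $P_A$ is immediate from \ref{f:proj_firm} (it is even $1$-Lipschitz), and continuity of $R_A=2P_A-\Id$ follows.

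Honestly, there is no serious obstacle here: every step is classical, and the only point requiring a little care is the existence-and-uniqueness of the projection, which I would either cite (the excerpt already notes closedness gives proximinality, and convexity upgrades this to single-valuedness) or derive quickly from the parallelogram law applied to a minimizing sequence to show it is Cauchy, together with strict convexity for uniqueness. If citing \cite{BC11} for the projection theorem is acceptable in the paper's style, the whole proof reduces to the two short inner-product computations above. The one thing to be slightly careful about is the one-sided derivative argument at $t=0$: since the feasible perturbations only go in the direction $t\geq 0$, I should only conclude $\varphi'(0^+)\geq 0$ and not an equality, but that one-sided inequality is precisely what is needed.
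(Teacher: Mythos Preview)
Your proof is correct and entirely standard; every step checks out, including the one-sided derivative argument and the $u$--$v$ decomposition for firm nonexpansiveness and for the reflector. The paper, however, does not prove this fact at all: it simply cites \cite[Theorem~3.14, Proposition~6.46, Proposition~4.8, Corollary~4.10]{BC11} for parts \ref{f:proj_cvx}, \ref{f:proj_firm}, and \ref{f:proj_nonex} respectively. Your write-up is precisely the classical argument one would find behind those citations, so the difference is purely one of self-containment versus reference-chasing; your version buys independence from \cite{BC11} at the cost of a few extra lines, while the paper's version keeps the preliminaries section short.
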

\begin{proof}
\ref{f:proj_cvx}: \cite[Theorem~3.14 and Proposition~6.46]{BC11}. 
\ref{f:proj_firm}: \cite[Proposition~4.8]{BC11}.
\ref{f:proj_nonex}: \cite[Corollary~4.10]{BC11}.
\end{proof}

\begin{lemma}
\label{l:projAB}
Let $A$ and $B$ be closed subsets of $X$ such that $A \subseteq B$, and let $x \in X$.
Then the following hold:
\begin{enumerate}
\item 
\label{l:projAB_sub}
$A\cap P_Bx \subseteq P_Ax$. 
\item 
\label{l:projAB_preimage}
$(\forall p \in A)$ $P_B^{-1}p \subseteq P_A^{-1}p$. 
\item
\label{l:projAB_single}
If $P_Bx =p \in A$, then $P_Ax =P_Bx$. 
\item
\label{l:projAB_cvx}
If $B$ is convex and $P_Bx \in A$, then $P_Ax =P_Bx$.
\end{enumerate}
\end{lemma}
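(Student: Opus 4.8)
The plan is to extract everything from one elementary observation: since $A \subseteq B$, minimizing $\|x-\cdot\|$ over the larger set $B$ can only lower the value, so $d_B \leq d_A$ pointwise on $X$. Combined with the defining characterization $P_Ax =\menge{a\in A}{\|x-a\|=d_A(x)}$ (and likewise for $B$), each of the four assertions becomes a short definition chase.

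First, for \ref{l:projAB_sub}, I would pick $a\in A\cap P_Bx$. Membership $a\in A$ gives $\|x-a\|\geq d_A(x)$, while $a\in P_Bx$ gives $\|x-a\|=d_B(x)\leq d_A(x)$; hence $\|x-a\|=d_A(x)$, i.e., $a\in P_Ax$. Part \ref{l:projAB_preimage} is then immediate: if $p\in A$ and $x\in P_B^{-1}p$, then $p\in A\cap P_Bx\subseteq P_Ax$ by \ref{l:projAB_sub}, so $x\in P_A^{-1}p$. For \ref{l:projAB_single}, assume $P_Bx=p$ with $p\in A$; then $\{p\}=P_Bx\subseteq A$, so \ref{l:projAB_sub} already yields $p\in P_Ax$, and in particular $d_A(x)=\|x-p\|=d_B(x)$. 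Conversely, if $a\in P_Ax$, then $\|x-a\|=d_A(x)=d_B(x)$ and $a\in A\subseteq B$, so $a\in P_Bx=\{p\}$, forcing $a=p$; hence $P_Ax=\{p\}=P_Bx$. Finally, \ref{l:projAB_cvx} follows by specialization: when $B$ is convex, \ref{f:proj}\ref{f:proj_cvx} makes $P_B$ single-valued, so the hypothesis $P_Bx\in A$ means precisely that $P_Bx=p$ for some $p\in A$, and \ref{l:projAB_single} applies.

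This is essentially routine; the only point that calls for attention is the bookkeeping between the set-valued projector $P_A\colon X\To A$ and the single-valued shorthand (we write $P_Ax=a$ exactly when $P_Ax=\{a\}$), so that the equalities in \ref{l:projAB_single} and \ref{l:projAB_cvx} are genuine equalities of sets rather than mere one-sided inclusions. I do not anticipate any real obstacle beyond keeping that distinction straight.
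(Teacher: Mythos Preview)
Your proof is correct and follows essentially the same approach as the paper's own proof: the paper also derives \ref{l:projAB_sub} from the elementary comparison of distances induced by $A\subseteq B$, deduces \ref{l:projAB_preimage} directly from \ref{l:projAB_sub}, obtains \ref{l:projAB_single} by the two-sided inclusion (phrasing the reverse inclusion as $P_Ax\subseteq\menge{y\in B}{\|x-y\|=\|x-p\|}=P_Bx$, which is exactly your argument that $d_A(x)=d_B(x)$), and reduces \ref{l:projAB_cvx} to \ref{l:projAB_single} via single-valuedness of $P_B$. The only cosmetic difference is your explicit use of $d_A$ and $d_B$ versus the paper's direct inequality chasing.
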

\begin{proof}
\ref{l:projAB_sub}: The conclusion is obvious if $A\cap P_Bx =\varnothing$.
Assume $A\cap P_Bx \neq\varnothing$, and let $p \in A\cap P_Bx$.
Then $\|x -p\| \leq \|x -y\|$ for all $y \in B$, and so for all $y \in A$ since $A \subseteq B$.
This combined with $p \in A$ gives $p \in \argmin_{y \in A} \|x -y\| =P_Ax$.

\ref{l:projAB_preimage}: Let $p \in A$. For all $x \in P_B^{-1}p$, we have $p \in P_Bx$,
and by \ref{l:projAB_sub}, $p \in A\cap P_Bx \subseteq P_Ax$, which implies $x \in P_A^{-1}p$.

\ref{l:projAB_single}: Assume that $P_Bx =p \in A$. 
Using \ref{l:projAB_sub}, we have $p \in P_Ax$, and so
\begin{equation}
P_Ax =\menge{y \in A}{\|x -y\| =\|x -p\|} \subseteq \menge{y \in B}{\|x -y\| =\|x -p\|} =P_Bx =\{p\}.
\end{equation}
It follows that $P_Ax =P_Bx =\{p\}$.

\ref{l:projAB_cvx}: By \cref{f:proj}\ref{f:proj_cvx}, if $B$ is convex, then $P_Bx$ is a singleton,
and if additionally $P_Bx \in A$, then by \ref{l:projAB_single}, $P_Ax =P_Bx$. 
\end{proof}

\begin{example}[Projection onto an affine subspace]
\label{ex:proj_affine}
Let $Y$ be a real Hilbert space, let $L$ be a linear operator from $X$ to $Y$, 
let $v \in \ran L$, and set $A =\menge{x \in X}{Lx =v}$.
Then 
\begin{equation}
(\forall x \in X)\quad P_Ax =x -L^\dagger(Lx -v),
\end{equation}
where $L^\dagger$ denotes the Moore--Penrose inverse of $L$.
\end{example}
\begin{proof}
This follows from \cite[Lemma~4.1]{BK04}, see also \cite[Example~28.14]{BC11}.
\end{proof}

\begin{example}[Projection onto a hyperplane or a halfspace]
\label{ex:proj_h}
Let $u \in X\smallsetminus \{0\}$, and let $\eta \in \RR$.
Then the following hold:
\begin{enumerate}
\item
\label{ex:proj_hyper} 
If $A =\menge{x \in X}{\scal{x}{u} =\eta}$, then
\begin{equation}
(\forall x \in X)\quad P_Ax =x -\frac{\scal{x}{u} -\eta}{\|u\|^2}u. 
\end{equation}
\item
\label{ex:proj_half}
If $A =\menge{x \in X}{\scal{x}{u} \leq \eta}$, then
\begin{equation}
(\forall x \in X)\quad P_Ax =\begin{cases}
x &\text{~if~} \scal{x}{u} \leq \eta, \\ 
x -\frac{\scal{x}{u} -\eta}{\|u\|^2}u &\text{~if~} \scal{x}{u} > \eta. 
\end{cases} 
\end{equation}
\end{enumerate} 
\end{example}
\begin{proof}
\ref{ex:proj_hyper}: \cite[Example~28.15]{BC11}. \ref{ex:proj_half}: \cite[Example~28.16]{BC11}.
\end{proof}

\begin{example}[Projection onto a ball]
\label{ex:proj_ball}
Let $B =\ball{u}{\rho}$ with $u \in X$ and $\rho \in \RPP$. Then
\begin{equation}
(\forall x \in X)\quad P_Bx =u +\frac{\rho}{\max\{\|x -u\|, \rho\}}(x -u).
\end{equation}
\end{example}
\begin{proof}
Let $x \in X$.
We have to prove $P_Bx =x$ if $\|x -u\| \leq \rho$, and $P_Bx =b :=u +\frac{\rho}{\|x -u\|}(x -u)$ otherwise.
Indeed, if $\|x -u\| \leq \rho$, then $x \in B$, and thus $P_Bx =x$. 
Assume that $\|x -u\| >\rho$.  
On the one hand, for all $y \in B$, by using $\|y -u\| \leq \rho$ and the triangle inequality, 
\begin{equation}
\|x -b\| =\|x -u\| -\rho \leq \|x -u\| -\|y -u\| \leq \|x -y\|.
\end{equation}
On the other hand, $\|b -u\| =\rho$, and so $b \in \ball{u}{\rho}$,  
then by combining with the convexity of $B$ and the above inequality, $P_Bx =b$, which completes the formula.
\end{proof}

\begin{example}[Projection onto an epigraph]
\label{ex:proj_epi}
Let $f \colon X \to \RR$ be convex and continuous, set $B =\epi f :=\menge{(x, \rho) \in X\times \RR}{f(x)\leq\rho}$, 
and let $(x, \rho) \in (X\times \RR)\smallsetminus B$. 
Then there exists $p\in X$ such that $P_B(x, \rho) =(p, f(p))$, 
\begin{equation}
x \in p +\big(f(p) -\rho\big)\partial f(p)\;\text{and}\; \rho <f(p) \leq
f(x)
\end{equation}
and 
\begin{equation}
\label{e:epi_proj}
(\forall y \in X)\quad \scal{y -p}{x -p} \leq \big(f(y)
-f(p)\big)\big(f(p) -\rho\big).
\end{equation}
\end{example}
\begin{proof}
See \cite[Lemma~5.1]{BDNP15}.
\end{proof}

In order to solve the feasibility problem \eqref{e:prob}, where $A$ and $B$ are closed subsets of $X$ with nonempty intersection,  
we employ the \emph{Douglas--Rachford algorithm} (also called \emph{averaged alternating reflections}) that generates a sequence $(x_n)_\nnn$ 
by
\begin{empheq}[box=\mybluebox]{equation}
\label{e:DRAseq}
(\forall\nnn)\quad x_{n+1} \in T_{A,B}x_n,\quad \text{where~} x_0 \in X,
\end{empheq}
and where 
\begin{empheq}[box=\mybluebox]{equation}
T_{A,B} :=\tfrac{1}{2}(\Id+R_BR_A)
\end{empheq}
is the \emph{Douglas--Rachford operator} associated with the ordered pair $(A, B)$.
The sequence $(x_n)_\nnn$ in \eqref{e:DRAseq} is
called a \emph{DRA sequence with respect to $(A, B)$}, with starting point $x_0$.
By \cref{f:proj}\ref{f:proj_cvx}, when $A$ and $B$ are convex, then $P_A$, $P_B$ and hence $T_{A,B}$ are single-valued.
Notice that
\begin{equation}
(\forall x \in X)\quad T_{A,B}x =\tfrac{1}{2}(\Id+R_BR_A)x =\menge{x -a +P_B(2a -x)}{a \in P_Ax},
\end{equation}
and if $P_A$ is single-valued then
\begin{equation}
T_{A,B} =\tfrac{1}{2}(\Id+R_BR_A) =\Id -P_A +P_BR_A.
\end{equation}
In the sequel we adopt the convention that in the case where $P_Ax$ is not a singleton, 
$(P_Ax, P_BR_Ax) =\menge{(a, P_B(2a -x))}{a \in P_Ax}$.

The set of fixed points of $T_{A,B}$ is defined by $\Fix T_{A,B} :=\menge{x\in X}{x \in T_{A,B}x}$.
It follows from $T_{A,B}x =x -P_Ax +P_BR_Ax$ that
\begin{equation}
\label{e:xFixT}
x \in \Fix T_{A,B} \quad\Leftrightarrow\quad P_Ax\cap P_BR_Ax\neq\varnothing, 
\end{equation}
and that modified for clarity
\begin{equation}
\label{e:xFixT'}
\left.
\begin{array}{c}
x \in \Fix T_{A,B}\\
P_Ax \text{~is a singleton~}
\end{array}\right\} \quad\Rightarrow\quad P_Ax \in A\cap B. 
\end{equation}

For the convex case, the basic convergence result of the DRA sequence $(x_n)_\nnn$ 
and the ``shadow sequence'' $(P_Ax_n)_\nnn$ is as follows.
\begin{fact}[Convergence of DRA in the convex consistent case]
\label{f:cvg}
Let $A$ and $B$ be closed convex subsets of $X$ with $A\cap B\neq\varnothing$, 
and let $(x_n)_\nnn$ be a DRA sequence with respect to $(A, B)$.
Then the following hold:
\begin{enumerate}
\item
\label{f:cvg_FixT} 
$x_n \to x \in \Fix T_{A,B} =(A\cap B) +N_{A-B}(0)$ and $P_Ax_n \to P_Ax \in A\cap B$.
\item 
\label{f:cvg_AnB}
If $0 \in \inte(A -B)$, then $x_n \to x \in A\cap B$; 
the convergence is finite provided that $x \in A\cap \inte B$.
\end{enumerate}
\end{fact}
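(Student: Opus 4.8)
The plan is to obtain (i) as an instance of the general convergence theory for firmly nonexpansive fixed‑point iterations, combined with an explicit computation of $\Fix T_{A,B}$, and then to derive (ii) from (i) by an interiority argument. First I would record that, $A$ and $B$ being closed and convex, $P_A$ and $P_B$ are single‑valued (\cref{f:proj}\ref{f:proj_cvx}), so $T_{A,B}=\Id-P_A+P_BR_A$ is a genuine single‑valued self‑map of $X$ and the recursion $x_{n+1}=T_{A,B}x_n$ is well defined. Since $R_A$ and $R_B$ are nonexpansive (\cref{f:proj}\ref{f:proj_nonex}), so is $R_BR_A$, and because $2T_{A,B}-\Id=R_BR_A$ the operator $T_{A,B}$ is firmly nonexpansive. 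Moreover every $z\in A\cap B$ satisfies $P_Az=z$, $R_Az=z$ and $P_BR_Az=z$, hence $T_{A,B}z=z$; thus $\varnothing\neq A\cap B\subseteq\Fix T_{A,B}$. The iteration $(x_n)$ is then Fej\'er monotone with respect to $\Fix T_{A,B}$, hence bounded, hence (in the Euclidean space $X$) it has cluster points; the usual demiclosedness argument, using $\|x_{n+1}-x_n\|\to0$ together with demiclosedness of $\Id-T_{A,B}$ at $0$, forces every cluster point into $\Fix T_{A,B}$, and Fej\'er monotonicity upgrades this to convergence of the whole sequence to some $x\in\Fix T_{A,B}$. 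This is the classical Krasnoselskii--Mann/Opial‑type statement, which I would quote rather than reprove.

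Next I would identify the fixed‑point set. For ``$\subseteq$'', write $a:=P_Az$ for $z\in\Fix T_{A,B}$; the fixed‑point equation reads $P_B(2a-z)=a$, which forces $a\in A\cap B$ together with $z-a\in N_A(a)$ (from $a=P_Az$) and $a-z\in N_B(a)$ (from $a=P_B(2a-z)$, via \cref{f:proj}\ref{f:proj_cvx}). Combining the two resulting inequalities gives $\scal{z-a}{\alpha-\beta}\le0$ for all $\alpha\in A$ and $\beta\in B$, i.e.\ $z-a\in N_{A-B}(0)$, so $z\in(A\cap B)+N_{A-B}(0)$. Conversely, given $a\in A\cap B$ and $w\in N_{A-B}(0)$, specializing the defining inequality of $N_{A-B}(0)$ to $\beta=a$ and to $\alpha=a$ yields $w\in N_A(a)$ and $-w\in N_B(a)$; hence $P_A(a+w)=a$ and $P_B\big(2a-(a+w)\big)=P_B(a-w)=a$, so $a+w\in\Fix T_{A,B}$. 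This gives $\Fix T_{A,B}=(A\cap B)+N_{A-B}(0)$, and in particular $x_n\to x\in\Fix T_{A,B}$. For the shadow sequence, $P_A$ is continuous (\cref{f:proj}), so $P_Ax_n\to P_Ax$, and since $x\in\Fix T_{A,B}$ with $P_Ax$ a singleton, \eqref{e:xFixT'} yields $P_Ax\in A\cap B$.

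For (ii), observe that $0\in\inte(A-B)$ makes the normal cone at $0$ trivial, $N_{A-B}(0)=\{0\}$, so (i) gives $\Fix T_{A,B}=A\cap B$ and therefore $x\in A\cap B$. Suppose in addition $x\in A\cap\inte B$. Then $P_Ax=x$ and $R_Ax=x\in\inte B$, so by continuity of $R_A$ there is $N\in\NN$ with $R_Ax_n\in B$ for all $n\ge N$; for such $n$, $P_BR_Ax_n=R_Ax_n=2P_Ax_n-x_n$, whence $x_{n+1}=x_n-P_Ax_n+P_BR_Ax_n=P_Ax_n\in A$. Since $x_{n+1}\to x\in\inte B$, there is $M\ge N$ with $x_{n+1}\in\inte B$ for all $n\ge M$; then $x_{n+1}\in A\cap B\subseteq\Fix T_{A,B}$, so $x_{n+2}=x_{n+1}$, and inductively $(x_n)$ is eventually constant, necessarily equal to its limit $x$. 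This is precisely finite convergence.

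The one genuinely non‑elementary ingredient is the convergence assertion in the first paragraph: it rests on the convergence theory of averaged (equivalently, firmly nonexpansive) self‑maps, and I expect that to be the main obstacle if one insists on a self‑contained proof. Everything else, namely the description of $\Fix T_{A,B}$ through normal cones and the finite‑convergence bootstrap in (ii), is short and uses only the projection facts already collected in Section~\ref{s:aux}.
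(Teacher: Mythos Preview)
Your argument is correct. The paper, however, does not prove this statement at all: it is recorded as a \emph{Fact}, and the proof consists solely of citations---part~\ref{f:cvg_FixT} to Lions--Mercier \cite{LM79}, Svaiter \cite{Sva11}, and \cite{BCL04}, and part~\ref{f:cvg_AnB} to \cite[Lemma~3.2]{BDNP15}. What you have written is essentially the standard argument those references contain, made self-contained: firm nonexpansiveness of $T_{A,B}$ from the averaged structure $T_{A,B}=\tfrac12(\Id+R_BR_A)$, Fej\'er monotonicity plus asymptotic regularity plus (trivial, in finite dimensions) demiclosedness to obtain convergence, the normal-cone computation of $\Fix T_{A,B}$, and the continuity/interiority bootstrap for finite convergence. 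So your route and the paper's differ only in presentation---you supply the details the paper delegates to the literature---and not in mathematical substance; your version buys self-containment at the cost of length, while the paper's keeps the exposition short by treating the result as known background.
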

\begin{proof}
\ref{f:cvg_FixT}: This follows from \cite[Theorem~1]{LM79} and \cite[Theorem~1]{Sva11}; see also \cite[Corollary~3.9 and Theorem~3.13]{BCL04}.
\ref{f:cvg_AnB}: Clear from \cite[Lemma~3.2]{BDNP15}.
\end{proof}

\section{Locally identical sets}

\label{s:lis}

\begin{definition}
\label{d:local}
Let $A$ and $B$ be subsets of $X$ such that $A\cap B \neq\varnothing$.
Then $A$ and $B$ are called \emph{locally identical} around $c \in A\cap B$ if there exists $\varepsilon \in \RPP$ such that 
$A\cap \ball{c}{\varepsilon} =B\cap \ball{c}{\varepsilon}$.
We say that $A$ and $B$ are locally identical around a set $C \subseteq A\cap B$
if they are locally identical around every point in $C$.
When $A$ and $B$ are locally identical around a point $c$ (respectively, a set $C$), 
we also say that $(A, B)$ is locally identical around $c$ (respectively, $C$).
\end{definition}

\begin{lemma}
\label{l:id}
Let $A$ and $B$ be subsets of $X$ such that $A\cap B \neq\varnothing$. Then the following hold: 
\begin{enumerate}
\item
\label{l:id_int}
$A$ and $B$ are locally identical around $\inte(A\cap B)$. 
\item
\label{l:id_intersect}
If $A$ and $B$ are locally identical around $c \in A\cap B$, then $A$, $B$ and $A\cap B$ are also locally identical around $c$.
\item
\label{l:id_sup}
If $A \subseteq B$, and $c$ is a point in $A$ such that $d_{B\smallsetminus A}(c) >0$, then $A$ and $B$ are locally identical around $c$.    
\item
\label{l:id_sub}
If $A$ is closed convex, and $C$ is a closed subset of $A$ such that $A$ and $C$ are locally identical around $C$, then $A =C$.    
\item
\label{l:id_cvx}
If $A$ and $B$ are closed convex and locally identical around $A\cap B$, then $A =B$. 
\end{enumerate}
\end{lemma}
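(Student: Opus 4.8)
The plan is to prove the five items in order, feeding earlier ones into later ones: all of \ref{l:id_int}--\ref{l:id_sup} are direct consequences of Definition~\ref{d:local}, the only real work is in \ref{l:id_sub}, and \ref{l:id_cvx} then follows by combining \ref{l:id_intersect} with \ref{l:id_sub}.

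For \ref{l:id_int}: if $c\in\inte(A\cap B)$, choose $\ve\in\RPP$ with $\ball{c}{\ve}\subseteq A\cap B$, so that $A\cap\ball{c}{\ve}=\ball{c}{\ve}=B\cap\ball{c}{\ve}$. For \ref{l:id_intersect}: take $\ve\in\RPP$ with $A\cap\ball{c}{\ve}=B\cap\ball{c}{\ve}=:D$; then $(A\cap B)\cap\ball{c}{\ve}=(A\cap\ball{c}{\ve})\cap(B\cap\ball{c}{\ve})=D$, so $A$, $B$ and $A\cap B$ are pairwise locally identical around $c$. For \ref{l:id_sup}: we may assume $B\smallsetminus A\neq\varnothing$, since otherwise $A=B$; then $d_{B\smallsetminus A}(c)\in\RPP$, and for any $\ve$ with $0<\ve<d_{B\smallsetminus A}(c)$ the ball $\ball{c}{\ve}$ is disjoint from $B\smallsetminus A$, whence $B\cap\ball{c}{\ve}\subseteq A\cap\ball{c}{\ve}$, while the reverse inclusion holds because $A\subseteq B$.

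The substantial item is \ref{l:id_sub}, where $C\subseteq A$ is given and the point is to show $A\subseteq C$. I would argue by contradiction: assume $a\in A\smallsetminus C$. Since $C$ is closed it is proximinal, so pick $c\in P_C a$; then $d_C(a)=\|a-c\|\in\RPP$ because $a\notin C$. By convexity of $A$, the segment $[c,a]$ lies in $A$; moreover, for every $t$ with $0<t\le1$, the point $(1-t)c+ta$ is at distance $(1-t)\|a-c\|<d_C(a)$ from $a$ (it equals $a$ when $t=1$), hence lies outside $C$, so $[c,a]\cap C=\{c\}$. On the other hand, local identity of $(A,C)$ around $c\in C$ furnishes $\ve\in\RPP$ with $A\cap\ball{c}{\ve}=C\cap\ball{c}{\ve}$, and for $t>0$ small enough the point $(1-t)c+ta$ lies both in $A$ and in $\ball{c}{\ve}$, hence in $C$ --- contradicting $[c,a]\cap C=\{c\}$. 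Thus $A\smallsetminus C=\varnothing$, i.e.\ $A=C$. This is the one place where the local-identity hypothesis does genuine work; the modest obstacle is choosing the right $c$, namely a nearest point of $C$ to $a$, which is exactly what forces the open segment towards $a$ to avoid $C$.

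Finally, \ref{l:id_cvx}: put $C:=A\cap B$, a closed subset of the closed convex set $A$. By \ref{l:id_intersect}, at each $c\in A\cap B$ the pair $(A,A\cap B)$ is locally identical, i.e.\ $A$ and $A\cap B$ are locally identical around $C$; \ref{l:id_sub} then yields $A=A\cap B$, so $A\subseteq B$. Swapping the roles of $A$ and $B$ (local identity being symmetric in the pair) gives $B\subseteq A$, and hence $A=B$.
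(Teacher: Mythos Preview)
Your proof is correct. Parts \ref{l:id_int}--\ref{l:id_sup} and \ref{l:id_cvx} match the paper's argument essentially verbatim. For \ref{l:id_sub}, however, you take a genuinely different and cleaner route than the paper. The paper fixes $c\in C$ and argues that the supremum $\bar\varepsilon$ of radii $\varepsilon$ for which $A\cap\ball{c}{\varepsilon}=C\cap\ball{c}{\varepsilon}$ must be $+\infty$; this requires choosing a sequence $a_n\in A\cap\ball{c}{\varepsilon_n}\smallsetminus C$ with $\varepsilon_n\downarrow\bar\varepsilon$, extracting a convergent subsequence, pushing the limit into $C$ via convexity of $A$ and closedness of $C$, and then invoking local identity a second time at the limit point to reach a contradiction. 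Your argument bypasses all of this: given $a\in A\smallsetminus C$ you choose $c\in P_Ca$ and exploit the nearest-point property directly, so that every point on the half-open segment $\left]c,a\right]$ is automatically excluded from $C$, while convexity of $A$ and local identity at $c$ force some such point into $C$. This is shorter, avoids any limiting argument, and uses the proximinality of the closed set $C$ (available since $X$ is Euclidean) in place of the paper's compactness-and-supremum machinery.
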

\begin{proof}
\ref{l:id_int}: 
Let $c \in \inte(A\cap B)$. Then there exists $\varepsilon \in \RPP$ such that $\ball{c}{\varepsilon} \subseteq A\cap B$, 
which implies $A\cap \ball{c}{\varepsilon} =\ball{c}{\varepsilon} =B\cap \ball{c}{\varepsilon}$, 
so $A$ and $B$ are locally identical around $c$.

\ref{l:id_intersect}: Note that if $A\cap \ball{c}{\varepsilon} =B\cap \ball{c}{\varepsilon}$ then  
$A\cap \ball{c}{\varepsilon} =B\cap \ball{c}{\varepsilon} =(A\cap B)\cap \ball{c}{\varepsilon}$. 

\ref{l:id_sup}: Since $d_{B\smallsetminus A}(c) >0$, there exists $\varepsilon \in \RPP$ such that
$(B\smallsetminus A)\cap \ball{c}{\varepsilon} =\varnothing$. 
Combining with $A \subseteq B$, we get 
$A\cap \ball{c}{\varepsilon} =(A\cap \ball{c}{\varepsilon})\cup ((B\smallsetminus A)\cap \ball{c}{\varepsilon}) =B \cap\ball{c}{\varepsilon}$.

\ref{l:id_sub}: Let $c \in C$. It suffices to show that
\begin{equation}
\label{e:allball}
(\forall\varepsilon \in \RPP)\quad A\cap \ball{c}{\varepsilon} =C\cap \ball{c}{\varepsilon}.
\end{equation}
Suppose to the contrary that \eqref{e:allball} does not hold. 
Since $A$ and $C$ are locally identical around $C$ which includes $c$, 
\begin{equation}
0 <\bar\varepsilon :=\sup\menge{\varepsilon \in \RPP}{A\cap \ball{c}{\varepsilon} =C\cap \ball{c}{\varepsilon}} <+\infty.
\end{equation}
Then $(\forall\varepsilon \in\ ]\bar\varepsilon, +\infty[)$ $A\cap \ball{c}{\varepsilon} \supsetneqq C\cap \ball{c}{\varepsilon}$.
Now let $\varepsilon_n \downarrow \bar\varepsilon$ and 
\begin{equation}
(\forall\nnn)\quad a_n \in A\cap \ball{c}{\varepsilon_n} \smallsetminus C.
\end{equation} 
By the boundedness of $(a_n)_\nnn$ and the closedness of $A$, we assume without loss of generality that $a_n \to a \in A$.
It follows from $\|a_n -c\| \leq \varepsilon_n$ that $\varepsilon :=\|a -c\| \leq \bar\varepsilon$. 
By the convexity of $A$, $(\forall \lambda \in ]0, 1[)$ $a_\lambda =\lambda a +(1 -\lambda)c \in A$, 
and $\|a_\lambda -c\| =\lambda \|a -c\| =\lambda\varepsilon <\bar\varepsilon$,
which yields $a_\lambda \in A\cap \ball{c}{\lambda\varepsilon} =C\cap \ball{c}{\lambda\varepsilon}$,
using the definition of $\bar\varepsilon$. 
From $a_\lambda \in C$ and the closedness of $C$, letting $\lambda \to 1^-$, 
we obtain $a \in C$, thus $A$ and $C$ are locally identical around $a$, 
i.e., $A\cap \ball{a}{\rho} =C\cap \ball{a}{\rho}$ for some $\rho \in \RPP$.
Since $a_n \to a$, we find $n_0 \in \NN$ satisfying $a_{n_0} \in \ball{a}{\rho}$.
Then $a_{n_0} \in A\cap \ball{a}{\rho} =C\cap \ball{a}{\rho} \subseteq C$,
which contradicts the fact that $(\forall\nnn)$ $a_n \notin C$. 
Therefore, \eqref{e:allball} holds.

Now pick an arbitrary $a \in A$, and let $\varepsilon >\|a -c\|$. 
By combining with \eqref{e:allball}, $a \in A\cap \ball{c}{\varepsilon} =C\cap \ball{c}{\varepsilon}$, and so $a \in C$. 
It follows that $A \subseteq C \subseteq A$, which gives $A =C$.

\ref{l:id_cvx}: Set $C :=A\cap B$. Then $C$ is closed, $C\subseteq A$, $C\subseteq B$, 
and by \ref{l:id_intersect}, $A$, $B$ and $C$ are locally identical around $C$. 
Now apply \ref{l:id_sub}.
\end{proof}

The following example illustrates that the assumption on convexity of $A$ in \cref{l:id}\ref{l:id_sub} is important.  
\begin{example}
Suppose that $X =\RR$, that $A =\{0, 1\}$ and that $C =\{0\}$. 
Then $A$ and $C$ are closed and locally identical around $C$, and $C \subseteq A$, but $C \neq A$.
This does not contradict \cref{l:id}\ref{l:id_sub} because $A$ is not convex.
\end{example}

\begin{lemma}
\label{l:proj}
Let $A$ and $B$ be closed subsets of $X$, and assume that $A$ and $B$ are locally identical around some $c \in A\cap B$,
say there exists $\varepsilon \in \RPP$ such that $A\cap \ball{c}{\varepsilon} =B\cap \ball{c}{\varepsilon}$. 
Let 
\begin{equation}
p \in A\cap \inte(\ball{c}{\varepsilon}) =B\cap \inte(\ball{c}{\varepsilon}).
\end{equation} 
Then the following hold:
\begin{enumerate}
\item 
\label{l:proj_subs} 
If $A \subseteq B$, then $(\forall x \in X)$ $P_Bx\cap \ball{c}{\varepsilon} \subseteq P_Ax$.
\item 
\label{l:proj_cvx} 
If $A$ and $B$ are convex, then $(\forall x \in X)$ $p =P_Ax$ $\Leftrightarrow$ $p =P_Bx$.
Equivalently, if $A$ and $B$ are convex then $P_A^{-1}p = P_B^{-1}p$.   
\item 
\label{l:proj_sub} 
If $A \subseteq B$ and $B$ is convex, then $(\forall x \in X)$ 
\begin{enumerate}
\item\label{l:proj_sub.a} $P_Bx \in \ball{c}{\varepsilon} \Rightarrow P_Ax =P_Bx$;
\item\label{l:proj_sub.b} $p \in P_Ax \quad\Leftrightarrow\quad p =P_Bx$; 
\item\label{l:proj_sub.c} $p \in P_Ax \Rightarrow P_Ax =P_Bx =p$.
\end{enumerate}
\end{enumerate}
\end{lemma}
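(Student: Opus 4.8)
\emph{Overview.} The plan is to treat \ref{l:proj_subs}, \ref{l:proj_cvx}, \ref{l:proj_sub} in turn, using \cref{f:proj} (single-valuedness and the variational characterization of the projection onto a convex set) together with \cref{l:projAB} (projections onto nested sets), with the local identity $A\cap \ball{c}{\varepsilon} =B\cap \ball{c}{\varepsilon}$ serving as the bridge between $A$ and $B$. For \ref{l:proj_subs} I would argue directly: if $x \in X$ and $q \in P_Bx\cap \ball{c}{\varepsilon}$, then $q \in B\cap \ball{c}{\varepsilon} =A\cap \ball{c}{\varepsilon}\subseteq A$, so $q \in A\cap P_Bx$, and \cref{l:projAB}\ref{l:projAB_sub} gives $q \in P_Ax$; the point $p$ plays no role here.

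\emph{Part \ref{l:proj_cvx}.} Since $A$ and $B$ are convex, \cref{f:proj}\ref{f:proj_cvx} makes $P_A$, $P_B$ single-valued with $p =P_Ax \Leftrightarrow x -p \in N_A(p)$, and likewise for $B$; so it suffices to prove $N_A(p) =N_B(p)$. I would first fix $\delta \in \RPP$ with $\ball{p}{\delta}\subseteq \ball{c}{\varepsilon}$ (possible because $p \in \inte\ball{c}{\varepsilon}$), giving $A\cap \ball{p}{\delta} =B\cap \ball{p}{\delta}$, and then show that the normal cone of a convex set at $p$ is already determined by the set near $p$, namely $N_A(p) =\menge{y \in X}{(\forall a \in A\cap \ball{p}{\delta})\ \scal{y}{a -p}\le 0}$: the inclusion ``$\subseteq$'' is trivial, and ``$\supseteq$'' holds because for any $a \in A$ the segment $[p,a]\subseteq A$ meets $\ball{p}{\delta}$ near $p$, so testing $y$ against those points already forces $\scal{y}{a -p}\le 0$. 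The identical description holds with $A$ replaced by $B$, and since the localized sets coincide we obtain $N_A(p) =N_B(p)$, hence $p =P_Ax \Leftrightarrow p =P_Bx$ for every $x$; by single-valuedness this is exactly $P_A^{-1}p =P_B^{-1}p$.

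\emph{Part \ref{l:proj_sub}.} Here $B$ is convex, so $P_Bx$ is a singleton. For \ref{l:proj_sub.a}: if $P_Bx \in \ball{c}{\varepsilon}$ then $P_Bx \in B\cap \ball{c}{\varepsilon} =A\cap \ball{c}{\varepsilon}\subseteq A$, and \cref{l:projAB}\ref{l:projAB_cvx} gives $P_Ax =P_Bx$. Part \ref{l:proj_sub.c} follows from \ref{l:proj_sub.b} and then \ref{l:proj_sub.a} (since $p =P_Bx \in \inte\ball{c}{\varepsilon}$), and in \ref{l:proj_sub.b} the implication $p =P_Bx \Rightarrow p \in P_Ax$ is immediate from \ref{l:proj_sub.a}. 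The remaining implication $p \in P_Ax \Rightarrow p =P_Bx$ is where I expect to spend effort: set $q :=P_Bx$, record $\scal{x -q}{p -q}\le 0$ from \cref{f:proj}\ref{f:proj_cvx} (using $p \in B$), and exploit that $[p,q]\subseteq B$ by convexity. For small $\lambda \in \RPP$ the point $q_\lambda :=p +\lambda(q -p)$ lies in $B\cap \ball{p}{\delta} =A\cap \ball{p}{\delta}\subseteq A$ (with $\delta$ as above), so $\|x -q_\lambda\|\ge d_A(x) =\|x -p\|$; expanding $\|x -q_\lambda\|^2$, dividing by $2\lambda$, and letting $\lambda\downarrow 0$ yields $\scal{x -p}{q -p}\le 0$. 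Adding this to $\scal{x -q}{p -q}\le 0$ — whose left-hand sides sum to $\|q -p\|^2$ — forces $q =p$, i.e., $p =P_Bx$.

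\emph{Main obstacle.} Parts \ref{l:proj_subs}, \ref{l:proj_sub.a}, \ref{l:proj_sub.c} are routine consequences of the local identity and \cref{l:projAB}. The two steps needing genuine care are the localization of the normal cone in \ref{l:proj_cvx} and, above all, the forward direction of \ref{l:proj_sub.b}: there one must upgrade ``$p$ is a nearest point of $A$'' — with $A$ \emph{not} assumed convex — to ``$p$ is a nearest point of the convex superset $B$'', and the short-segment argument based at $p$, where $A$ coincides with the convex set $B$, combined with the variational inequality for $P_Bx$ is what closes the gap.
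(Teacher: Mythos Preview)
Your proposal is correct. Parts \ref{l:proj_subs}, \ref{l:proj_cvx}, \ref{l:proj_sub.a}, and \ref{l:proj_sub.c} match the paper's proof almost verbatim; in \ref{l:proj_cvx} the paper simply cites an external lemma for $N_A(p)=N_B(p)$, which is exactly the localization-of-the-normal-cone argument you spell out.

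The genuine divergence is in \ref{l:proj_sub.b}. The paper does not argue directly: it observes that $A\cap\ball{p}{\rho}=B\cap\ball{p}{\rho}$ is convex (being equal to an intersection of convex sets), applies part \ref{l:proj_cvx} to the convex pair $\big(A\cap\ball{p}{\rho},\,B\big)$ to get $P_{A\cap\ball{p}{\rho}}^{-1}p=P_B^{-1}p$, and then sandwiches using \cref{l:projAB}\ref{l:projAB_preimage} for the inclusions $A\cap\ball{p}{\rho}\subseteq A\subseteq B$ to obtain $P_A^{-1}p\subseteq P_{A\cap\ball{p}{\rho}}^{-1}p=P_B^{-1}p\subseteq P_A^{-1}p$. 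Your route is instead a direct variational argument: combine the obtuse-angle inequality $\scal{x-q}{p-q}\le 0$ for $q=P_Bx$ with the ``short segment'' inequality $\scal{x-p}{q-p}\le 0$ obtained from $q_\lambda\in A$ and $p\in P_Ax$, and sum to force $\|p-q\|^2\le 0$. Both are clean; the paper's version recycles \ref{l:proj_cvx} and the preimage lemma, while yours is self-contained and avoids introducing the auxiliary set $A\cap\ball{p}{\rho}$.
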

\begin{proof}
\ref{l:proj_subs}: Observe that $P_Bx\cap \ball{c}{\varepsilon} =P_Bx\cap (B\cap \ball{c}{\varepsilon}) =P_Bx\cap (A\cap \ball{c}{\varepsilon}) \subseteq A\cap P_Bx$. The conclusion follows \cref{l:projAB}\ref{l:projAB_sub}.

To prove \ref{l:proj_cvx} and \ref{l:proj_sub}, note that since $p \in \inte(\ball{c}{\varepsilon})$, 
there exists $\rho \in \RPP$ such that $\ball{p}{\rho} \subseteq \ball{c}{\varepsilon}$,
which yields
\begin{equation}
\label{e:smallball}
A\cap \ball{p}{\rho} =B\cap \ball{p}{\rho}.
\end{equation}  

\ref{l:proj_cvx}: By \cite[Lemma~2.12]{BDNP15}, 
it follows from $p \in A\cap B$ and \eqref{e:smallball} that $N_A(p) =N_B(p)$. 
Now using \eqref{e:proj_cvx}, 
$(\forall x \in X)$ $p =P_Ax$ $\Leftrightarrow$ $x -p \in N_A(p) =N_B(p)$ $\Leftrightarrow$ $p =P_Bx$. 
Hence, $P_A^{-1}p = P_B^{-1}p$.

\ref{l:proj_sub.a}: Let $x \in X$. Assume that $P_Bx \in \ball{c}{\varepsilon}$.
Then $P_Bx \in B\cap \ball{c}{\varepsilon} =A\cap \ball{c}{\varepsilon} \subseteq A$. 
By \cref{l:projAB}\ref{l:projAB_cvx}, $P_Ax =P_Bx$. 

\ref{l:proj_sub.b}: Using \eqref{e:smallball} and applying \ref{l:proj_cvx} 
for two convex sets $A\cap \ball{p}{\rho}$ and $B$, we obtain 
$P_{A\cap \ball{p}{\rho}}^{-1}p =P_B^{-1}p$.
Next applying \cref{l:projAB}\ref{l:projAB_preimage} for $A\cap \ball{p}{\rho} \subseteq A$ and $A \subseteq B$, we have
$P_A^{-1}p \subseteq P_{A\cap \ball{p}{\rho}}^{-1}p =P_B^{-1}p \subseteq P_A^{-1}p$, and so $P_A^{-1}p =P_B^{-1}p$.

\ref{l:proj_sub.c}: Now assume $p \in P_Ax$. 
Then \ref{l:proj_sub.b} gives $P_Bx =p \in A$, and \cref{l:projAB}\ref{l:projAB_cvx} gives $P_Ax =P_Bx =p$.
\end{proof}

\section{Cases involving halfspaces}

\label{s:4}

In this section, we assume that 
\begin{empheq}[box=\mybluebox]{equation}
\text{$f \colon X \to \RR$ is convex and continuous},
\end{empheq}
and that
\begin{empheq}[box=\mybluebox]{equation}
\epi f :=\menge{(x, \rho) \in X\times \RR}{f(x)\leq\rho}.
\end{empheq}
In the space $X\times \RR$, we set
\begin{empheq}[box=\mybluebox]{equation}
H :=X\times \{0\} \quad\text{and}\quad B :=\epi f.
\end{empheq}
Then the projection onto $H$ is given by
\begin{equation}
(\forall (x, \rho) \in X\times\RR)\quad P_H(x, \rho) =(x, 0),
\end{equation}
the projection onto $B$ is described as in \cref{ex:proj_epi},
and the effect of performing each step of the DRA applied to $H$ and $B$ is characterized in the following result.
\begin{fact}[One DRA step]
\label{f:DRstep}
Let $z =(x, \rho) \in X\times \RR$, 
and set $z_+ :=(x_+, \rho_+) =T_{H,B}(x, \rho)$. 
Then the following hold:
\begin{enumerate}
\item 
\label{f:DRstep_form}
If $\rho \leq -f(x)$, then $z_+ =(x, 0) \in H$. Otherwise, there exists $x_+^* \in \partial f(x_+)$ such that 
\begin{equation}
\label{e:DRstep}
x_+ =x -\rho_+x_+^*,\; f(x_+) \leq f(x), \;\text{and}\;
\rho_+ =\rho +f(x_+) >0;
\end{equation}
in which either ($\rho\geq 0$ and $z_+ \in B$)  
or ($\rho<0$ and $T_{H,B}z_+ \in B$). 
\item 
\label{f:DRstep_pos}
$\ran T_{H,B} \subseteq X\times \RP$, or equivalently, $(\forall z \in X\times \RR)$ $z_+ \in X\times \RP$. 
\end{enumerate}
\end{fact}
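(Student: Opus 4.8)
The plan is to reduce the whole statement to the explicit form of $T_{H,B}$ combined with the epigraphical projection formula of \cref{ex:proj_epi}. Since $H = X \times \{0\}$ is a linear subspace, $P_H$ is single-valued with $P_H(x,\rho) = (x,0)$, so $R_H(x,\rho) = 2(x,0) - (x,\rho) = (x,-\rho)$ and, because $T_{H,B} = \Id - P_H + P_B R_H$,
\[
	T_{H,B}(x,\rho) = (x,\rho) - (x,0) + P_B(x,-\rho) = (0,\rho) + P_B(x,-\rho).
\]
Thus one DRA step is entirely controlled by $P_B(x,-\rho)$, and the two coordinates of $z_+$ are read off from the two coordinates of $(0,\rho) + P_B(x,-\rho)$.

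For part~\ref{f:DRstep_form} I would split on whether $(x,-\rho) \in B$, i.e.\ whether $f(x) \le -\rho$. If $\rho \le -f(x)$, then $(x,-\rho) \in B$, hence $P_B(x,-\rho) = (x,-\rho)$ and $z_+ = (0,\rho) + (x,-\rho) = (x,0) \in H$. Otherwise $(x,-\rho) \notin B$, and \cref{ex:proj_epi}, applied to the point $(x,-\rho)$, produces $p \in X$ with $P_B(x,-\rho) = (p,f(p))$, with $x \in p + \big(f(p)+\rho\big)\partial f(p)$, and with $-\rho < f(p) \le f(x)$. Putting $x_+ := p$ gives $z_+ = (p,\,\rho + f(p)) = (x_+,\,\rho + f(x_+))$, so $\rho_+ = \rho + f(x_+)$; then $-\rho < f(p)$ gives $\rho_+ > 0$, the estimate $f(p) \le f(x)$ gives $f(x_+) \le f(x)$, and since $f(x_+)+\rho = \rho_+ > 0$ the inclusion $x \in x_+ + \rho_+ \partial f(x_+)$ lets me pick $x_+^* := (x - x_+)/\rho_+ \in \partial f(x_+)$, whence $x_+ = x - \rho_+ x_+^*$. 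This is exactly \eqref{e:DRstep}.

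The remaining ``either\dots or'' dichotomy is where the argument needs the most care. If $\rho \ge 0$, membership is a one-line check: $z_+ = (x_+,\rho_+) \in B \Leftrightarrow f(x_+) \le \rho_+ = \rho + f(x_+) \Leftrightarrow 0 \le \rho$, which holds. If $\rho < 0$, then $z_+ = (x_+,\rho_+)$ with $\rho_+ > 0$, and I would apply the just-proven part~\ref{f:DRstep_form} with $z_+$ in the role of $z$, writing $T_{H,B}z_+ = (x_{++},\rho_{++})$. If $\rho_+ \le -f(x_+)$, then $T_{H,B}z_+ = (x_+,0)$, and from $0 < \rho_+ \le -f(x_+)$ we get $f(x_+) \le 0$, so $(x_+,0) \in B$. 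If instead $\rho_+ > -f(x_+)$, then $\rho_{++} = \rho_+ + f(x_{++})$, so $T_{H,B}z_+ \in B \Leftrightarrow f(x_{++}) \le \rho_+ + f(x_{++}) \Leftrightarrow 0 \le \rho_+$, which again holds. Either way $T_{H,B}z_+ \in B$, completing~\ref{f:DRstep_form}.

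Finally, part~\ref{f:DRstep_pos} follows at once from part~\ref{f:DRstep_form}: in the first case $z_+ = (x,0) \in X \times \RP$, and in the second $\rho_+ > 0$, so $z_+ \in X \times \RPP \subseteq X \times \RP$. The only real obstacle is the bookkeeping in the $\rho < 0$ branch of the dichotomy: the point is to notice that one DRA step already makes the second coordinate strictly positive, so that a second application of~\ref{f:DRstep_form} --- in either of its two branches --- necessarily lands in $B$; everything else is routine manipulation of the formulas from \cref{ex:proj_epi}.
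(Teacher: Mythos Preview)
Your argument is correct. The paper itself does not prove this fact: it simply cites \cite[Corollary~5.3(i)\&(ii)]{BDNP15} for part~\ref{f:DRstep_form} and observes that part~\ref{f:DRstep_pos} follows immediately. What you have done is supply a self-contained derivation using only the epigraphical projection formula of \cref{ex:proj_epi}, and every step checks out.

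One minor simplification: in the $\rho<0$ branch of the dichotomy, your first sub-case $\rho_+ \le -f(x_+)$ is in fact vacuous. Indeed, from $-\rho < f(x_+)$ you already have $f(x_+)>0$ (since $-\rho>0$), hence $-f(x_+)<0<\rho_+$. So only the second sub-case actually occurs, and there the conclusion $T_{H,B}z_+\in B$ follows at once from $\rho_+>0$ via the $\rho\geq 0$ case you had just established. This does not affect the validity of your proof, only its length.
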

\begin{proof}
\ref{f:DRstep_form}: \cite[Corollary~5.3(i)\&(ii)]{BDNP15}.
\ref{f:DRstep_pos}: Clear from \ref{f:DRstep_form}.  
\end{proof}

We have the following result on convergence of the DRA in the case of a hyperplane and an epigraph.
\begin{fact}[Finite convergence of DRA in the
(hyperplane,epigraph) case]
\label{f:P-E}
Suppose that
\begin{equation}
A = H \quad\text{and}\quad B =\epi f \text{~with~} \inf_X f <0.
\end{equation}
Given a starting point $z_0 =(x_0, \rho_0) \in X\times \RR$, generate the DRA sequence $(z_n)_\nnn$ by 
\begin{equation}
\label{e:DRAseq'}
(\forall\nnn)\quad z_{n+1} =(x_{n+1}, \rho_{n+1}) =T_{A,B}z_n.
\end{equation}
Then $(z_n)_\nnn$ converges \emph{finitely} to a point in $A\cap B$.
\end{fact}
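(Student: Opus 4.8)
The plan is to combine the convergence guarantee for the DRA in the consistent convex case with a careful bookkeeping of the one-step recursion in \cref{f:DRstep}, using the \emph{strict} inequality $\inf_X f<0$ to exclude an infinite run of positive-height reflection steps.

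First I would fix the geometry. Since $\inf_X f<0$, pick $\bar x\in X$ with $f(\bar x)<0$; then $(\bar x,0)\in H\cap\epi f=A\cap B$, so $A\cap B\neq\varnothing$, and moreover a short computation shows $0\in\inte(A-B)$ (any $(y,s)$ with $s<-\inf_X f$ lies in $A-B=H-\epi f$, and $-\inf_X f>0$). As $A=H$ and $B=\epi f$ are closed and convex, \cref{f:cvg}\ref{f:cvg_AnB} applies and $z_n\to z\in A\cap B$; writing $z=(x^*,0)$ we record $x_n\to x^*$, $\rho_n\to 0$, $f(x^*)\leq 0$, and $f(x_n)\to f(x^*)$ by continuity of $f$. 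I would also note, directly from \cref{f:DRstep}\ref{f:DRstep_form} (in both of its branches), that $f(x_{n+1})\leq f(x_n)$ for every $n$, so $(f(x_n))_\nnn$ is non-increasing and hence $f(x_n)\geq f(x^*)$ throughout.

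The heart of the argument is the claim that the first (``$\rho\leq-f(x)$'') branch of \cref{f:DRstep}\ref{f:DRstep_form} is used at some step $n_0\geq 1$; I would prove it by contradiction. Assume $\rho_n>-f(x_n)$ for all $n\geq 1$; recall $\rho_n\geq 0$ for $n\geq 1$ by \cref{f:DRstep}\ref{f:DRstep_pos}. Then the ``otherwise'' branch applies at every such step, giving $\rho_{n+1}=\rho_n+f(x_{n+1})$, and telescoping $\rho_N=\rho_1+\sum_{k=2}^N f(x_k)$ for all $N\geq 1$. If $f(x^*)<0$, the terms $f(x_k)$ are eventually below $f(x^*)/2<0$, so $\rho_N\to-\infty$, contradicting $\rho_N\geq 0$. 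If $f(x^*)=0$, then $f(x_k)\geq 0$ for all $k$, so $(\rho_N)_{N\geq 1}$ is non-decreasing; being also nonnegative and convergent to $0$ it must vanish identically, whence $f(x_{n+1})=\rho_{n+1}-\rho_n=0$ for all $n\geq 1$ and in particular $\rho_2=0=-f(x_2)$, contradicting $\rho_2>-f(x_2)$. Either way we reach a contradiction, so the claimed $n_0$ exists.

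Once such an $n_0$ is found, finite termination is immediate: $\rho_{n_0}\geq 0$ together with $\rho_{n_0}\leq-f(x_{n_0})$ forces $f(x_{n_0})\leq 0$, so $(x_{n_0},0)\in\epi f=B$, while the first branch of \cref{f:DRstep}\ref{f:DRstep_form} gives $z_{n_0+1}=(x_{n_0},0)\in H=A$; applying that fact once more to $z_{n_0+1}$ (whose height $0$ satisfies $0\leq-f(x_{n_0})$) yields $T_{A,B}z_{n_0+1}=z_{n_0+1}$, so $z_n=z_{n_0+1}\in A\cap B$ for every $n\geq n_0+1$. I expect the main obstacle to be the boundary case $f(x^*)=0$ in the previous step: there $f(x_n)\geq 0$ forever, the first branch of \cref{f:DRstep} is reachable only when $\rho_n$ and $f(x_n)$ vanish at the same step, and one must squeeze a contradiction out of the impossibility of a strictly increasing nonnegative sequence tending to $0$. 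This is precisely where $\inf_X f<0$ (rather than merely $\leq 0$) is indispensable, as it is what gives $0\in\inte(A-B)$ and hence both the convergence of $(z_n)$ and $\rho_n\to 0$ that power the argument.
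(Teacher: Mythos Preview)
The paper does not actually prove \cref{f:P-E}; it is recorded as a \emph{Fact} and the proof consists solely of the citation ``See \cite[Theorem~5.4]{BDNP15}.'' Your proposal therefore cannot be compared to an in-paper argument, but it can be assessed on its own merits.

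Your argument is correct and self-contained within the paper's toolkit (only \cref{f:cvg}\ref{f:cvg_AnB} and \cref{f:DRstep} are used). The verification that $0\in\inte(A-B)$ is sound: $A-B=X\times(-\infty,-\inf_X f]$ (or the open variant), whose interior contains $0$ precisely because $\inf_X f<0$. The monotonicity $f(x_{n+1})\leq f(x_n)$ indeed holds in both branches (equality in the first, since $x_+=x$). The dichotomy on $f(x^*)$ is handled cleanly: for $f(x^*)<0$ the telescoped sum drives $\rho_N\to-\infty$, and for $f(x^*)=0$ the squeeze $0\leq\rho_1\leq\rho_2\leq\cdots\to 0$ forces $\rho_n\equiv 0$ and $f(x_2)=0$, contradicting $\rho_2>-f(x_2)$. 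The termination paragraph is also correct: once $\rho_{n_0}\leq -f(x_{n_0})$ with $n_0\geq 1$ (so $\rho_{n_0}\geq 0$), you get $f(x_{n_0})\leq 0$, $z_{n_0+1}=(x_{n_0},0)\in A\cap B$, and this point is fixed.

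One minor efficiency remark: in the case $f(x^*)<0$ you could simply invoke the second clause of \cref{f:cvg}\ref{f:cvg_AnB} directly, since then the limit $z=(x^*,0)$ lies in $A\cap\inte B$; only the boundary case $f(x^*)=0$ genuinely requires your telescoping/squeeze argument. But your unified treatment is perfectly valid.
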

\begin{proof}
See \cite[Theorem~5.4]{BDNP15}.
\end{proof}

In view of \cref{f:P-E}, it is natural to ask about the convergence of the DRA when $A$ is a halfspace instead of a hyperplane.
\begin{theorem}[Finite convergence of DRA in the
(halfspace,epigraph) case]
\label{t:H-E}
Suppose that either 
\begin{enumerate}
\item
\label{t:H-E+}
$A = H_+ :=X\times \RP$ and $B =\epi f$,
or 
\item
\label{t:H-E-}
$A = H_- :=X\times \RM$ and $B =\epi f$ with $\inf_X f <0$.
\end{enumerate}
Then the DRA sequence \eqref{e:DRAseq'} converges \emph{finitely} to a point in $A\cap B$.
\end{theorem}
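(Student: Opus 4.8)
The plan is to exploit a piecewise description of $T_{A,B}$ that, on one half of $X\times\RR$, reproduces the Douglas--Rachford operator $T_{H,B}$ already understood via \cref{f:P-E}, and on the other half reduces to a single projection onto $B$. Throughout, note that $T_{A,B}$ is single-valued because $A$ (a halfspace) and $B=\epi f$ are convex, by \cref{f:proj}\ref{f:proj_cvx}.

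For case \ref{t:H-E+}, where $A=H_+=X\times\RP$, I would first record two computations. If $\rho<0$, then $P_{H_+}(x,\rho)=(x,0)=P_H(x,\rho)$, hence $R_{H_+}(x,\rho)=R_H(x,\rho)$, and therefore $T_{H_+,B}(x,\rho)=T_{H,B}(x,\rho)$; by \cref{f:DRstep}\ref{f:DRstep_pos} this point lies in $X\times\RP$. If instead $\rho\ge 0$, then $(x,\rho)\in H_+$, so $R_{H_+}$ fixes it and $T_{H_+,B}(x,\rho)=\tfrac12(\Id+R_B)(x,\rho)=P_B(x,\rho)$. Now let $(z_n)=((x_n,\rho_n))$ be the DRA sequence. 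If $\rho_0<0$, then $\rho_1\ge 0$, so after discarding at most one initial term we may assume $\rho_0\ge 0$; then $z_1=P_Bz_0$. If $z_0\in B$, then $z_0\in H_+\cap B=A\cap B$ and the sequence is constant; otherwise \cref{ex:proj_epi} yields $z_1=(p,f(p))$ with $f(p)>\rho_0\ge 0$, so $z_1\in B$ and $z_1\in X\times\RP=H_+$, i.e.\ $z_1\in A\cap B$, and the sequence is constant from then on. Hence convergence is finite.

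For case \ref{t:H-E-}, where $A=H_-=X\times\RM$ and $\inf_X f<0$, the crucial point is that $T_{H_-,B}$ agrees with $T_{H,B}$ on all of $X\times\RP$: for $\rho>0$ this is the computation above with $H_-$ in place of $H_+$, and for $\rho=0$ both $R_{H_-}$ and $R_H$ fix $(x,0)$, so both operators equal $P_B$ there. Since $T_{H,B}$ maps into $X\times\RP$ by \cref{f:DRstep}\ref{f:DRstep_pos}, the set $X\times\RP$ is forward invariant under $T_{H,B}$; hence, as soon as some iterate $z_n$ has $\rho_n\ge 0$, the tail $(z_m)_{m\ge n}$ is (inductively) a Douglas--Rachford sequence for the pair $(H,B)$, which by \cref{f:P-E} converges finitely to a point of $H\cap B\subseteq(X\times\RM)\cap B=A\cap B$; consequently $(z_n)$ itself converges finitely to a point of $A\cap B$. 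It therefore only remains to reach a nonnegative second coordinate. If $\rho_0\ge 0$, we are already there. If $\rho_0<0$, then $z_0\in H_-$, so $z_1=P_Bz_0$; if $z_0\in B$, then $z_0\in A\cap B$; otherwise \cref{ex:proj_epi} gives $z_1=(p,f(p))$ with $f(p)>\rho_0$, and then either $f(p)\le 0$, so $z_1\in(X\times\RM)\cap B=A\cap B$, or $f(p)>0$, so $\rho_1>0$ and we are in the invariant regime after a single step.

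I do not anticipate a serious obstacle, since \cref{f:P-E} is quoted and everything else is the elementary piecewise evaluation of $T_{H_\pm,B}$ together with bookkeeping. The one point requiring care is the reduction in case \ref{t:H-E-}: one must check that the tail of $(z_n)$ is genuinely a DRA sequence for $(H,B)$, which uses both the agreement $T_{H_-,B}=T_{H,B}$ on $X\times\RP$ and the forward invariance of $X\times\RP$ under $T_{H,B}$, applied inductively.
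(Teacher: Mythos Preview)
Your proposal is correct and follows essentially the same approach as the paper: in both cases you split according to the sign of the second coordinate, observe that on one side $T_{A,B}$ coincides with $T_{H,B}$ (so \cref{f:DRstep}\ref{f:DRstep_pos} and \cref{f:P-E} apply) and on the other side $T_{A,B}$ reduces to $P_B$ (so one reaches $A\cap B$ or the invariant region after a single step). The only differences are cosmetic: you treat the boundary $\rho=0$ in case~\ref{t:H-E-} a touch more explicitly, and you phrase the reduction in case~\ref{t:H-E+} as ``discard at most one initial term'' rather than the paper's direct three-case analysis.
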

\begin{proof}
\ref{t:H-E+}: Let $z =(x, \rho) \in X\times \RR$. 
If $z \in H_-$, then $P_Az =P_Hz$, and so $z_+ :=T_{A,B}z =T_{H,B}z \in H_+$ due to \cref{f:DRstep}\ref{f:DRstep_pos}.
If $z \in H_+ \cap B =A\cap B$, we are done.
If $z \in H_+ \smallsetminus B$, then $P_Az =z$, $R_Az =z$, and by \cref{ex:proj_epi}, $P_BR_Az =P_Bz =(x_+, f(x_+))$ with $f(x_+) >\rho \geq 0$, which implies $z_+ =z -P_Az +P_BR_Az =(x_+, f(x_+)) \in H_+ \cap B =A\cap B$. We deduce that the DRA sequence \eqref{e:DRAseq'} converges in at most two steps. 

\ref{t:H-E-}: If $z_0 \in H_- =A$, then $P_Az_0 =z_0$, $R_Az_0 =z_0$, and $ z_1 =P_BR_Az_0 =(x_1, f(x_1)) \in B$, 
which gives $z_1 \in A\cap B$ if $f(x_1) \leq 0$, and $z_1 \in H_+$ otherwise. 
It is thus sufficient to consider the case $z_0 \in H_+$. 
Then $P_Az_0 =P_Hz_0$, and so $z_1 =T_{A,B}z_0 =T_{H,B}z_0 \in H_+$ due to \cref{f:DRstep}\ref{f:DRstep_pos}.
This implies that 
\begin{equation}
(\forall\nnn)\quad z_n \in H_+ \quad\text{and}\quad z_{n+1} =T_{H,B}z_n.
\end{equation}
Now apply \cref{f:P-E}.
\end{proof}

The following example whose special cases can be found in \cite{BDNP16} illustrates that the Slater's condition $\inf_X f <0$ in \cref{f:P-E} and \cref{t:H-E}\ref{t:H-E-} is important. 
\begin{example}
\label{ex:epi_infinite}
Suppose that either $A =H$ or $A =H_- :=X\times \RM$, that $B =\epi f$ with $\inf_X f \geq 0$,
and that $f$ is differentiable at its minimizers (if they exist). 
Let $z_0 =(x_0, \rho_0) \in B$, where $x_0$ is not a minimizer of $f$,
and generate the DRA sequence $(z_n)_\nnn$ as in \eqref{e:DRAseq'}.
Then $(P_Az_n)_\nnn$ and thus also $(z_n)_\nnn$ do not converge finitely.
\end{example}
\begin{proof}
Firstly, we claim that if $z =(x, \rho) \in B$, where $x$ is not a minimizer of $f$,   
then $z_+ :=T_{A,B}z =T_{H,B}z =(x_+, \rho_+) \in B$ and $x_+$ is not a minimizer of $f$. 
Indeed, by assumption, $\rho >0$, so $P_Az =P_Hz$, and then $z_+ =T_{A,B}z =T_{H,B}z$. 
By using \cref{f:DRstep}\ref{f:DRstep_form},
$z_+ \in B$ and 
\begin{equation}
\label{e:decrease}
x_+ =x -\rho_+x_+^* \quad\text{with}\quad x_+^* \in \partial f(x_+),               
\quad\text{and}\quad \rho_+ =\rho +f(x_+) >0. 
\end{equation}
If $x_+$ is a minimizer of $f$, then $x_+^* =\nabla f(x_+) =0$, 
and by \cref{e:decrease}, $x =x_+$ is a minimizer, which is absurd.
Hence, the claim holds. 
As a result, 
\begin{equation}
\label{e:xn}
(\forall\nnn)\quad \text{$x_n$ is not a minimizer of $f$}.
\end{equation}
Now assume that $(P_Az_n)_\nnn =(x_n, 0)_\nnn$ converges finitely. 
Then there exists $n \in \NN$ such that $x_{n+1} =x_n$.
Using again \cref{e:decrease}, we get $x_{n+1}^* =0 \in \partial f(x_{n+1})$, 
which contradicts \cref{e:xn}.   
\end{proof}

\begin{theorem}[Finite convergence of DRA in (hyperplane or
halfspace,halfspace) case]
\label{t:H-H}
Suppose that $A$ is either a hyperplane or a halfspace, that $B$ is a halfspace of $X$, 
and that $A\cap B \neq\varnothing$.
Then every DRA sequence $(x_n)_\nnn$ with respect to $(A, B)$ converges \emph{finitely} to a point $x$,
where $x \in A\cap B$ or $(\forall\nnn)$ $x_n =x \in B$ with $P_Ax \in A\cap B$. 
\end{theorem}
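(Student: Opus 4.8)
The statement covers two cases: $A$ a hyperplane and $A$ a halfspace, with $B$ always a halfspace. The idea is to reduce everything to an essentially one- or two-dimensional picture by exploiting the explicit projection formulas in \cref{ex:proj_h}, and to track what happens to the DRA iterates. Write $A = \menge{x}{\scal{x}{u} = \alpha}$ or $\menge{x}{\scal{x}{u}\le\alpha}$ with $u\neq 0$, and $B = \menge{x}{\scal{x}{v}\le\beta}$ with $v\neq 0$. Since $A\cap B\neq\varnothing$, the problem is feasible. The crucial observation is that $R_A$ and $R_B$ act affinely (in the hyperplane case $R_A$ is an affine reflection; in the halfspace case $R_A$ is either the identity or the affine reflection, depending on which side of the bounding hyperplane the point lies), so $T_{A,B} = \tfrac12(\Id + R_BR_A)$ is piecewise affine with at most a handful of pieces. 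I would split $X\times\{\text{nothing}\}$ — rather, split $X$ — according to the four sign combinations of $\scal{\cdot}{u} - \alpha$ (when $A$ is a halfspace) and of the corresponding quantity after applying $R_A$ relative to $B$.

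\emph{Case $A$ is a hyperplane.} Here $R_A$ is an affine isometry, so $R_BR_A$ is firmly-nonexpansive-composed-with-isometry and $T_{A,B}$ is (firmly) nonexpansive; in fact one can observe that $T_{A,B}$ is affine on each of the two halfspaces determined by $\{R_A x \in \bd B\}$, i.e. by the sign of $\scal{R_Ax}{v} - \beta$. On the ``good'' piece, $R_BR_Ax = R_Ax$, hence $T_{A,B}x = \tfrac12(x + R_Ax) = P_Ax \in A$, and then one more application lands in $A\cap B$ (since $P_Ax\in A$ and applying $T$ to a point of $A$ gives $x - P_Ax + P_BR_Ax = P_BR_Ax = P_Bx \in B$, using $P_A(P_Ax)=P_Ax$; and in the hyperplane case $P_Ax\in A$ so we then need it to be in $B$ too — actually the cleanest route is: a point $z\in A$ has $R_Az=z$, so $T_{A,B}z = P_Bz\in B$, and if moreover $z\in B$ we are at a fixed point in $A\cap B$). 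On the ``bad'' piece, $T_{A,B}$ is the composition of two affine reflections restricted suitably; I would show the $u$-component (the component of $x$ along $u$, which measures distance to $A$) is driven monotonically, and that after finitely many steps the iterate crosses into the good piece, whence it terminates in at most two further steps. The hyperplane sub-case is also deducible directly from \cref{f:P-E} after an orthogonal change of coordinates identifying $A$ with $H = X'\times\{0\}$ and $B = \epi f$ with $f$ affine — but one must check $\inf f < 0$, which holds exactly when $A\not\subseteq B$; the degenerate case $A\subseteq B$ makes $A$ and $B$ locally identical around all of $A$, and \cref{l:id,l:proj} then force $P_Ax_n$ to stabilize trivially (every fixed point of $T_{A,B}$ works and $T_{A,B}=P_A$ there).

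\emph{Case $A$ is a halfspace.} Partition $X$ into $A$ and $X\smallsetminus A$. If $x_n\in A$, then $R_Ax_n = x_n$, so $x_{n+1} = P_Bx_n$; this is either already in $A\cap B$ (done) or in $X\smallsetminus A$. If $x_n\notin A$, then $R_Ax_n$ is the affine reflection of $x_n$ across $\bd A$, which lies in $A$; I would analyze $\scal{R_Ax_n}{v}$ versus $\beta$. The finitely many cases reduce, by the same affineness, to: either the iterate reaches the interior-feasible piece and terminates in $\le 2$ steps, or — and this is the phenomenon flagged in the theorem statement ``$x_n = x\in B$ with $P_Ax\in A\cap B$'' — the sequence stalls at a point $x\notin A$ but with $P_Ax$ (the projection onto the bounding hyperplane, which by geometry lands inside $A\cap B$) being the actual solution; this is the familiar DRA behaviour where the shadow converges but the governing sequence sits outside. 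I would verify that once such an $x$ is reached, $T_{A,B}x = x$, i.e. $x\in\Fix T_{A,B}$: this needs $P_Ax\cap P_BR_Ax\neq\varnothing$ via \eqref{e:xFixT}, and one computes $P_Ax = $ the foot on $\bd A$, $R_Ax$ its reflection, and checks $P_B$ of that equals the same foot.

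\emph{Main obstacle.} The routine part is the projection arithmetic; the genuine work is the \emph{bookkeeping of which affine piece the iterate lives in and proving the piece-index is eventually constant} — i.e. ruling out that the iterates oscillate forever between the pieces without ever hitting the terminal ones. I expect to handle this by identifying a scalar quantity (the signed distance to $\bd A$, or to $\bd B$ after reflection) that is eventually monotone and whose sign change is what pushes the iterate onto the terminal piece; combined with the fact that $T_{A,B}$ is nonexpansive (Fact~\ref{f:proj}) and has nonempty fixed-point set, this forces convergence, and the piecewise-affine structure upgrades plain convergence to \emph{finite} convergence. A secondary subtlety is that when $A$ is a halfspace $P_A$ need not be nonexpansive-improving in the way the convex theory wants — but $A$ is still convex, so \cref{f:proj,f:cvg} apply and only the finiteness is extra; that finiteness is exactly where the explicit halfspace formulas (\cref{ex:proj_h}\ref{ex:proj_half}) earn their keep.
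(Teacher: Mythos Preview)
Your plan heads in a sensible direction and you even mention the route the paper actually takes---reduce to \cref{f:P-E} or \cref{t:H-E} after rotating coordinates so that $\bd A = X'\times\{0\}$---but several details are wrong or missing. First, the reduction ``$B=\epi f$ with $f$ affine'' only works when the normal of $B$ has a nonzero component orthogonal to $\bd A$; when it does not (the paper's Case~3, $\nu=0$), $B$ has the form $C\times\RR$ for a halfspace $C\subseteq X'$ and is not an epigraph over $\bd A$ at all. The paper dispatches this by a short explicit computation showing $T_{A,B}$ reaches $A\cap B$ in at most two steps; your plan never isolates this case. Second, your treatment of the degenerate subcase is wrong on two specific points. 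A hyperplane $A$ and a halfspace $B\supseteq A$ are \emph{never} locally identical in the sense of \cref{d:local}, since any ball about $c\in A$ meets $B$ in a set with interior while $A$ has none; so the appeal to \cref{l:id} and \cref{l:proj} is void. And $T_{A,B}=P_A$ fails: with $A=X'\times\{0\}$, $B=X'\times\RP$ (so $A\subseteq B$ and $\inf f=0$) and $z=(x,\rho)$ with $\rho>0$, one has $R_Az=(x,-\rho)\notin B$, $R_BR_Az=(x,\rho)=z$, hence $T_{A,B}z=z\ne (x,0)=P_Az$. This configuration is exactly what produces the clause ``$(\forall\nnn)\ x_n=x\in B$ with $P_Ax\in A\cap B$'' in the statement, and the paper handles it (and its halfspace analogue $A=X\times\RM$, $B=X\times\RP$) by that same direct two-line check, not by any locally-identical machinery.

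The paper's organization sidesteps your ``main obstacle'' entirely: rather than proving from scratch that a monotone scalar forces a piece change in finitely many steps, it recognizes each non-vertical case as literally an instance of \cref{t:H-E} (halfspace/epigraph) or \cref{f:P-E} (hyperplane/epigraph), where that work is already packaged, and disposes of the vertical case $\nu=0$ and the boundary case $\inf f=0$ by one-step computations. If you insist on the purely piecewise-affine route you must make the monotone quantity precise and prove termination in each region---feasible, but more work than the reduction, and not yet supplied in your sketch.
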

\begin{proof}
If $\dim X =0$, i.e., $X =\{0\}$, then the result is trivial,
so we will work in the space $X\times \RR$ with $\dim X \geq 0$, and denote by $(z_n)_\nnn$ the DRA sequence.
After rotating the sets if necessary, we can and do assume that $A =X\times \RM$,
and $B =\menge{(x, \rho) \in X\times \RR}{\scal{(x, \rho)}{(u, \nu)} \leq \eta}$, 
with $(u, \nu) \in X\times \RR\smallsetminus \{(0, 0)\}$ and $\eta \in \RR$.
Noting that $\scal{(x, \rho)}{(u, \nu)} =\scal{x}{u} +\rho\nu$, 
we distinguish the following three cases.

\emph{Case 1:} $\nu <0$. Then 
\begin{equation}
B =\menge{(x, \rho) \in X\times \RR}{\frac{\eta -\scal{x}{u}}{\nu} \leq \rho}
\end{equation}
is the epigraph of the linear function 
\begin{equation}
f\colon X \to \RR\colon x \mapsto \frac{\eta -\scal{x}{u}}{\nu}.
\end{equation}
If $\inf_X f <0$ , we are done due to \cref{t:H-E}\ref{t:H-E-}. 
Assume that $\inf_X f \geq 0$. Then $u = 0 \in X$ since $u \in X\smallsetminus \{0\}$ implies
$\inf_X f \leq \inf_{\lambda \in \RM} f(\lambda u) =\inf_{\lambda \in \RM} \frac{\eta -\lambda\|u\|^2}{v} =-\infty$.
Now in turn, $(\forall x \in X)$ $f(x) =\frac{\eta}{\nu}$, and so $\frac{\eta}{\nu} =\inf_X f \geq 0$, which gives $\eta \leq 0$. 
By the assumption that $A\cap B \neq\varnothing$, we must have $\eta =0$, and then $B =X\times \RP$.
Let $z =(x, \rho) \in X\times \RR$. 
If $z \in B$, then $R_Az =(x, -\rho)$, and $R_BR_Az =(x, \rho) =z$, which gives $T_{A,B}z =z$,
i.e., $z \in \Fix T_{A,B}$, in which case $P_Az =(x, 0) \in A\cap B$. 
If $z \notin B$, then $z \in A$ and $R_Az =z$, $R_BR_Az =R_Bz =(x, -\rho)$, so 
\begin{equation}
T_{A,B}z =\tfrac{1}{2}(z +R_BR_Az) =(x, 0) \in A\cap B.
\end{equation} 

\emph{Case 2:} $\nu >0$. Then 
\begin{equation}
B =\menge{(x, \rho) \in X\times \RR}{\frac{\eta -\scal{x}{u}}{\nu} \geq \rho}.
\end{equation}
After reflecting the sets across the hyperplane $X \times \{0\}$,
we have $A =X\times \RP$, and $B$ is the epigraph of a linear function. 
Now apply \cref{t:H-E}\ref{t:H-E+}.

\emph{Case 3:} $\nu =0$. Then $u \in X\smallsetminus \{0\}$ and
\begin{equation}
B =\menge{(x, \rho) \in X\times \RR}{\scal{x}{u} \leq \eta} =\menge{x \in X}{\scal{x}{u} \leq \eta}\times \RR.
\end{equation}
Let $z =(x, \rho) \in X\times \RR$. If $z \in A\cap B$, we are done.
If $z \in A\smallsetminus B$, then $\rho \in \RM$, $R_Az =P_Az =z \notin B$, 
and by \cref{ex:proj_h}\ref{ex:proj_half}, 
\begin{equation}
P_BR_Az =P_Bz =\left(x -\frac{\scal{x}{u} -\eta}{\|u\|^2}u, \rho\right) \in B,
\end{equation}
which is also in $A =X\times \RM$ and which yields
\begin{equation}
T_{A,B}z =z -P_Az +P_BR_Az =P_BR_Az \in A\cap B. 
\end{equation}  
Now assume that $z \notin A$. We have $P_Az =(x, 0)$ and $R_Az =(x, -\rho)$. 
If $(x, -\rho) \in B$, then $R_BR_Az =(x, -\rho)$, 
and $T_{A,B}z =\frac{1}{2}(z +R_BR_Az) =(x, 0) \in A\cap B$.  
Finally, if $(x, -\rho) \notin B$, then again by \cref{ex:proj_h}\ref{ex:proj_half}, 
\begin{equation}
P_BR_Az =P_B(x, -\rho) =(x, -\rho) -\frac{\scal{x}{u} -\eta}{\|u\|^2}(u, 0),
\end{equation}  
and thus,
\begin{equation}
T_{A,B}z =z -P_Az +P_BR_Az =\left(x -\frac{\scal{x}{u} -\eta}{\|u\|^2}u, 0\right) \in A. 
\end{equation}  
Moreover, $\scal{x -\frac{\scal{x}{u} -\eta}{\|u\|^2}u}{u} =\eta$, so $T_{A,B}z \in B$, 
and we get $T_{A,B}z \in A\cap B$. 

The proof for the (hyperplane,halfspace) case is similar and uses \cref{f:P-E}.
\end{proof}

\section{Expanding and modifying sets}

\label{s:expand}

\begin{lemma}[Expanding sets]
\label{l:expand}
Let $A$ and $B$ be closed (not necessarily convex) subsets of $X$ such that $A\cap B \neq\varnothing$, 
and let $x_0$ be in $X$.
Suppose that the DRA sequence $(x_n)_\nnn$ with respect to $(A, B)$, with starting point $x_0$, converges to $x \in \Fix T_{A,B}$.
Suppose further that there exist two closed convex sets $A'$ and $B'$ in $X$ such that
$A\subseteq A'$, $B\subseteq B'$, 
and that both $(A, A')$ and $(B, B')$ are locally identical around some $c \in P_Ax$.  
Then $P_Ax =P_{A'}x$, $x \in \Fix T_{A',B'}$ and 
\begin{equation}
\label{e:adjust}
(\exists n_0 \in \NN)(\forall n \geq n_0)\quad x_{n+1} =T_{A',B'}x_n,
\end{equation}
i.e., $(\exists n_0 \in \NN)(\forall\nnn)\quad T_{A,B}^nx_{n_0} =T_{A',B'}^nx_{n_0}$.
\end{lemma}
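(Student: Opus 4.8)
The plan is to localize everything near the limit point $x$ and then invoke the results of Section~\ref{s:lis}, especially \cref{l:proj}. The starting observation is that $c \in P_Ax$ and $(A,A')$ is locally identical around $c$; since $A \subseteq A'$ and $A'$ is convex, \cref{l:proj}\ref{l:proj_sub.c} applied with the pair $(A,A')$ (playing the roles of $(A,B)$ there, with $p = c$) yields $c \in P_Ax \Rightarrow P_Ax = P_{A'}x = c$. In particular $P_Ax$ is the singleton $\{c\}$, and $P_{A'}x = c$ as well, which is the first assertion.

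Next I would handle the reflection and the second set. From $P_Ax = P_{A'}x = c$ we get $R_Ax = R_{A'}x = 2c - x$. Let $b \in P_BR_Ax$ be such that $c \in P_Ax$ and $c - c + b = x$, i.e.\ $b = x$ realizes the fixed-point condition; more precisely, since $x \in \Fix T_{A,B}$ and $P_Ax = \{c\}$ is a singleton, \eqref{e:xFixT} gives $c \in P_Ax \cap P_BR_Ax$, so $c \in P_B(2c-x)$. Now I want to invoke local identity of $(B,B')$ around $c$. The point $c \in B \cap B'$ lies in the interior of the ball on which $B$ and $B'$ agree, so by \cref{l:proj}\ref{l:proj_sub.c} applied to the pair $(B,B')$ with the argument $2c-x = R_Ax$ and with $p = c$: $c \in P_B(R_Ax) \Rightarrow P_B(R_Ax) = P_{B'}(R_Ax) = c$. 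Hence $P_{B'}R_{A'}x = P_{B'}(2c-x) = c$, and since $P_{A'}x = c$ is a singleton we conclude $x = x - c + c \in \Id - P_{A'} + P_{B'}R_{A'}$ evaluated at $x$, i.e.\ $x \in \Fix T_{A',B'}$.

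For the tail identity \eqref{e:adjust}, I would argue that once $x_n$ is close enough to $x$, the projection steps cannot see the difference between the primed and unprimed sets. Concretely, fix $\varepsilon$ so small that $A \cap \ball{c}{\varepsilon} = A' \cap \ball{c}{\varepsilon}$ and $B \cap \ball{c'}{\varepsilon} = B' \cap \ball{c'}{\varepsilon}$ (possibly different radii/centers, but shrink to a common $\varepsilon$), and use continuity: $x_n \to x$, $P_Ax_n \to c$ (here I need that $P_A$ is ``single-valued and continuous at $x$'' in the relevant sense — this is where care is needed, since $A$ is only closed, not convex). Once $x_n$ is within $\varepsilon/2$ of $x$ and $P_Ax_n$ is within $\varepsilon/2$ of $c$, every selection $a_n \in P_Ax_n$ lies in $\ball{c}{\varepsilon}$, hence $a_n \in A' \cap \ball{c}{\varepsilon}$ and by \cref{l:proj}\ref{l:proj_subs} (with roles $A \subseteq A'$) we get $P_{A'}x_n \subseteq$ ... — more simply, $a_n \in P_{A'}x_n$ too, so $T_{A,B}x_n$ and $T_{A',B'}x_n$ can be matched step by step. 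The second reflected point $2a_n - x_n$ is near $2c - x = R_Ax$, hence near the center $c'$ of the ball where $B$ and $B'$ coincide, so likewise $P_B(2a_n-x_n) = P_{B'}(2a_n - x_n)$ on $\ball{c'}{\varepsilon}$. Therefore for $n \ge n_0$, $x_{n+1} = x_n - a_n + P_B(2a_n-x_n) = x_n - a_n + P_{B'}(2a_n-x_n) \in T_{A',B'}x_n$, and since $T_{A',B'}$ is single-valued (both primed sets convex), $x_{n+1} = T_{A',B'}x_n$. Iterating from $x_{n_0}$ gives the final ``$T_{A,B}^n x_{n_0} = T_{A',B'}^n x_{n_0}$'' reformulation.

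The main obstacle I anticipate is the continuity/selection issue in the last paragraph: because $A$ is merely closed, $P_A$ need not be single-valued or continuous globally, so I must be careful to use only the facts that (i) $x_n \to x$, (ii) along the DRA recursion $x_{n+1} - x_n = -a_n + P_B(2a_n - x_n)$ with $a_n \in P_Ax_n$, and (iii) any accumulation point of $(a_n)$ lies in $P_Ax = \{c\}$ (by closedness of $\gra P_A$ for proximinal sets), so in fact $a_n \to c$. Once $a_n \to c$ is established, the rest is the clean ball-localization argument above. I expect the write-up to lean on this accumulation-point argument rather than on any continuity statement for $P_A$ that the excerpt has not provided for nonconvex $A$.
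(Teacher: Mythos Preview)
Your argument for $P_Ax=P_{A'}x=c$ and $x\in\Fix T_{A',B'}$ matches the paper's proof exactly (both use \cref{l:proj}\ref{l:proj_sub.c} twice). For the tail identity \eqref{e:adjust}, however, you take a genuinely different route from the paper.

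The paper works \emph{from the convex side}: since $P_{A'}x=c$ and $P_{B'}R_{A'}x=c$ are already known, nonexpansiveness of $P_{A'}$, $R_{A'}$, $P_{B'}$ (all convex) gives $\|P_{A'}x_n-c\|\leq\|x_n-x\|<\varepsilon$ and $\|P_{B'}R_{A'}x_n-c\|\leq\|x_n-x\|<\varepsilon$ for large $n$. Then \cref{l:proj}\ref{l:proj_sub.a} pulls these back to the unprimed sets: $P_Ax_n=P_{A'}x_n$ and $P_BR_{A'}x_n=P_{B'}R_{A'}x_n$, and the conclusion follows in one line. The ``continuity/selection obstacle'' for the nonconvex $P_A$ that you anticipate never arises, because the paper never needs $P_Ax_n$ to be close to $c$ a priori --- it gets this for free from the convex $P_{A'}$.

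Your route works \emph{from the nonconvex side}: you argue (correctly) that the selections $a_n\in P_Ax_n$ are bounded with unique cluster point $c$ (closed graph of the distance/projector), hence $a_n\to c$, and then $b_n=x_{n+1}-x_n+a_n\to c$ as well. Once $a_n,b_n\in\inte\ball{c}{\varepsilon}$, you push forward to the primed sets. This is valid, but two small fixes are needed: (i) the lemma part you want is \cref{l:proj}\ref{l:proj_sub.b} or \ref{l:proj_sub.c}, not \ref{l:proj_subs} --- part~\ref{l:proj_subs} only gives $P_{A'}x_n\cap\ball{c}{\varepsilon}\subseteq P_Ax_n$, which does not yield $a_n=P_{A'}x_n$; and (ii) there is no separate center $c'$ --- both local identities are around the same $c$ by hypothesis. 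With those corrections your proof goes through; it is slightly longer than the paper's because of the compactness detour, whereas the paper's use of nonexpansiveness of the primed operators bypasses that detour entirely.
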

\begin{proof}
By assumption and \cref{l:proj}\ref{l:proj_sub.c}, $P_Ax =P_{A'}x =c$, and so $R_Ax =R_{A'}x$.
Since $x \in \Fix T_{A,B}$, it follows from \eqref{e:xFixT} that $c \in P_BR_Ax =P_BR_{A'}x$. 
Using again \cref{l:proj}\ref{l:proj_sub.c}, $P_BR_{A'}x =P_{B'}R_{A'}x =c$.
We get $P_{A'}x =P_{B'}R_{A'}x =c$, and again by \eqref{e:xFixT}, $x \in \Fix T_{A',B'}$. 
Now by the definition of $A'$ and $B'$, there exists $\varepsilon \in \RPP$ such that 
\begin{equation}
A\cap \ball{c}{\varepsilon} =A'\cap \ball{c}{\varepsilon}
\quad\text{and}\quad
B\cap \ball{c}{\varepsilon} =B'\cap \ball{c}{\varepsilon}.
\end{equation}
There exists $n_0 \in \NN$ such that
\begin{equation}
(\forall n\geq n_0)\quad \|x_n -x\| <\varepsilon.
\end{equation}
Let $n \geq n_0$. Since $P_{A'}$, $P_{B'}$ are (firmly) nonexpansive 
and $R_{A'}$ is nonexpansive (\cref{f:proj}\ref{f:proj_firm}\&\ref{f:proj_nonex}), 
\begin{equation}
\|P_{A'}x_n -c\| =\|P_{A'}x_n -P_{A'}x\| \leq \|x_n -x\| <\varepsilon,
\end{equation}
and also
\begin{equation}
\|P_{B'}R_{A'}x_n -c\| =\|P_{B'}R_{A'}x_n -P_{B'}R_{A'}x\| \leq \|x_n -x\| <\varepsilon.
\end{equation}
Thus, $P_{A'}x_n \in \ball{c}{\varepsilon}$ and $P_{B'}R_{A'}x_n \in \ball{c}{\varepsilon}$.
By \cref{l:proj}\ref{l:proj_sub.a}, $P_Ax_n =P_{A'}x_n$ and $P_BR_{A'}x_n =P_{B'}R_{A'}x_n$, 
which implies $R_Ax_n =R_{A'}x_n$ and $P_BR_Ax_n =P_{B'}R_{A'}x_n$.  
We deduce that $x_{n+1} =T_{A,B}x_n =T_{A',B'}x_n$.
\end{proof}

If the assumption that $A\subseteq A'$ and $B\subseteq B'$ in \cref{l:expand} 
is replaced by the assumption on convexity of $A$ and $B$, then \eqref{e:adjust} still holds, as shown in the following lemma. 
We shall now look at situations where $(A',B')$ are modifications
of $(A,B)$ that preserve local structure. 

\begin{lemma}
\label{l:adjust}
Let $A$ and $B$ be closed convex subsets of $X$ such that $A\cap B \neq\varnothing$, 
and let $(x_n)_\nnn$ be the DRA sequence with respect to $(A, B)$, with starting point $x_0 \in X$.
Suppose that there exist two closed convex sets $A'$ and $B'$ in $X$ such that
both $(A, A')$ and $(B, B')$ are locally identical around $P_Ax \in A\cap B$,
where $x \in \Fix T_{A,B}$ is the limit of $(x_n)_\nnn$. 
Then 
\begin{equation}
(\exists n_0 \in \NN)(\forall n \geq n_0)\quad x_{n+1} =T_{A',B'}x_n,
\end{equation}
i.e., $(\exists n_0 \in \NN)(\forall\nnn)\quad T_{A,B}^nx_{n_0} =T_{A',B'}^nx_{n_0}$.
\end{lemma}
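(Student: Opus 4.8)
The plan is to follow the proof of \cref{l:expand} essentially line by line, the only change being that the inclusions $A\subseteq A'$ and $B\subseteq B'$ used there are no longer available; at each of their occurrences they will be replaced by the convexity of all four sets together with the two-sided transfer principle \cref{l:proj}\ref{l:proj_cvx} (in place of the one-sided \cref{l:proj}\ref{l:proj_sub}).

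First I would observe that, $A$ and $B$ being closed convex, $P_A$, $P_B$, and hence $T_{A,B}$ are single-valued, so $(x_n)_\nnn$ is well defined and, by hypothesis, converges to $x\in\Fix T_{A,B}$ with $c:=P_Ax\in A\cap B$. The first real step is to move the fixed-point property over to $(A',B')$: since $(A,A')$ is locally identical around $c$ and both sets are convex, \cref{l:proj}\ref{l:proj_cvx} applied with distinguished point $c$ gives $P_A^{-1}c=P_{A'}^{-1}c$, so from $c=P_Ax$ we get $c=P_{A'}x$ and $R_Ax=2c-x=R_{A'}x$. As $x\in\Fix T_{A,B}$, \eqref{e:xFixT} and single-valuedness give $c=P_Ax=P_BR_Ax=P_BR_{A'}x$; applying \cref{l:proj}\ref{l:proj_cvx} to the convex pair $(B,B')$, locally identical around $c\in B\cap B'$, yields $c=P_{B'}R_{A'}x$, whence $T_{A',B'}x=x-P_{A'}x+P_{B'}R_{A'}x=x$.

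Next I would fix $\varepsilon\in\RPP$ with $A\cap\ball{c}{\varepsilon}=A'\cap\ball{c}{\varepsilon}$ and $B\cap\ball{c}{\varepsilon}=B'\cap\ball{c}{\varepsilon}$ (the smaller of the two radii furnished by the hypothesis works for both), and choose $n_0\in\NN$ with $\|x_n-x\|<\varepsilon$ for all $n\geq n_0$. For a fixed $n\geq n_0$: by nonexpansiveness of $P_{A'}$ (\cref{f:proj}\ref{f:proj_firm}), $\|P_{A'}x_n-c\|=\|P_{A'}x_n-P_{A'}x\|\leq\|x_n-x\|<\varepsilon$, hence $P_{A'}x_n\in A'\cap\inte(\ball{c}{\varepsilon})=A\cap\inte(\ball{c}{\varepsilon})$, and \cref{l:proj}\ref{l:proj_cvx} with distinguished point $P_{A'}x_n$ gives $x_n\in P_{A'}^{-1}(P_{A'}x_n)=P_A^{-1}(P_{A'}x_n)$, i.e.\ $P_Ax_n=P_{A'}x_n$ and $R_Ax_n=R_{A'}x_n$. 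Similarly, since $R_{A'}$ and $P_{B'}$ are nonexpansive (\cref{f:proj}\ref{f:proj_firm}\&\ref{f:proj_nonex}), $\|P_{B'}R_{A'}x_n-c\|=\|P_{B'}R_{A'}x_n-P_{B'}R_{A'}x\|\leq\|x_n-x\|<\varepsilon$, so $P_{B'}R_{A'}x_n\in B\cap\inte(\ball{c}{\varepsilon})$ and \cref{l:proj}\ref{l:proj_cvx} applied to $(B,B')$ gives $P_BR_{A'}x_n=P_{B'}R_{A'}x_n$. Combining,
\begin{equation*}
x_{n+1}=T_{A,B}x_n=x_n-P_Ax_n+P_BR_Ax_n=x_n-P_{A'}x_n+P_{B'}R_{A'}x_n=T_{A',B'}x_n,
\end{equation*}
which is the assertion.

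The one point I would watch carefully is that $P_{A'}x_n$ and $P_{B'}R_{A'}x_n$ must land in the \emph{open} ball $\inte(\ball{c}{\varepsilon})$, not merely in its closure, since \cref{l:proj}\ref{l:proj_cvx} requires its distinguished point in $A\cap\inte(\ball{c}{\varepsilon})$; this is exactly why the two estimates above must be strict, which is ensured by the strict inequality $\|x_n-x\|<\varepsilon$. Everything else is a routine transcription of the proof of \cref{l:expand}, with \cref{l:proj}\ref{l:proj_cvx} now playing the role that the inclusions and \cref{l:proj}\ref{l:proj_sub} played there.
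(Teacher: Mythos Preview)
Your proof is correct and essentially the same as the paper's: both arguments pick a common radius $\varepsilon$, use nonexpansiveness to trap the relevant projections inside $\inte\ball{c}{\varepsilon}$, and then invoke \cref{l:proj}\ref{l:proj_cvx} to swap $P_A,P_B$ with $P_{A'},P_{B'}$. The only cosmetic difference is that the paper estimates $P_Ax_n$ and $P_BR_Ax_n$ directly (using the convexity of $A,B$) and then transfers to the primed projectors, whereas you mimic the structure of \cref{l:expand} by first transferring the fixed point to $(A',B')$ and then estimating $P_{A'}x_n$ and $P_{B'}R_{A'}x_n$; this costs you the extra preliminary paragraph but is otherwise equivalent.
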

\begin{proof}
Recall from \cref{f:cvg}\ref{f:cvg_FixT} that $x_n \to x \in \Fix T_{A,B}$ with $P_Ax \in A\cap B$.
Setting $c :=P_Ax =P_BR_Ax$, 
from the assumption on $A'$ and $B'$, there is $\varepsilon \in \RPP$ such that 
\begin{equation}
A\cap \ball{c}{\varepsilon} =A'\cap \ball{c}{\varepsilon}
\quad\text{and}\quad
B\cap \ball{c}{\varepsilon} =B'\cap \ball{c}{\varepsilon}.
\end{equation}
Furthermore, there exists $n_0 \in \NN$ such that
\begin{equation}
(\forall n\geq n_0)\quad \|x_n -x\| <\varepsilon.
\end{equation}
Let $n \geq n_0$. 
According to \cref{f:proj}\ref{f:proj_firm}\&\ref{f:proj_nonex}, $P_A$, $P_B$ are (firmly) nonexpansive 
and $R_A$ is nonexpansive, so  
\begin{equation}
\|P_Ax_n -c\| =\|P_Ax_n -P_Ax\| \leq \|x_n -x\| <\varepsilon,
\end{equation}
and also
\begin{equation}
\|P_BR_Ax_n -c\| =\|P_BR_Ax_n -P_BR_Ax\| \leq \|x_n -x\| <\varepsilon.
\end{equation}
Therefore, $P_Ax_n \in A\cap \inte\ball{c}{\varepsilon}$ and $P_BR_Ax_n \in B\cap\inte\ball{c}{\varepsilon}$.
Using \cref{l:proj}\ref{l:proj_cvx}, $P_Ax_n =P_{A'}x_n$ and $P_BR_Ax_n =P_{B'}R_Ax_n$. 
Hence $R_Ax_n =R_{A'}x_n$ and $P_BR_Ax_n =P_{B'}R_{A'}x_n$.  
We obtain that $x_{n+1} =T_{A,B}x_n =T_{A',B'}x_n$.
\end{proof}

\begin{theorem}[Modifying sets]
\label{t:global}
Let $A$ and $B$ be closed convex subsets of $X$ such that $A\cap B \neq\varnothing$.
Suppose that there exist two closed convex sets $A'$ and $B'$ in $X$ such that
both $(A, A')$ and $(B, B')$ are locally identical around $A\cap B$. 
Then 
for any DRA sequence $(x_n)_\nnn$ with respect to $(A, B)$ in \eqref{e:DRAseq},
\begin{equation}
\label{e:global}
(\exists n_0 \in \NN)(\forall n \geq n_0)\quad x_{n+1} =T_{A',B'}x_n,
\end{equation}
and this is still true when exchanging the roles of $T_{A,B}$ and $T_{A',B'}$ in \eqref{e:DRAseq} and \eqref{e:global}. 
\end{theorem}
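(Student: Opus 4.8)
The plan is to reduce Theorem~\ref{t:global} to Lemma~\ref{l:adjust} by exploiting the local-identity hypothesis together with the known convergence of the DRA in the convex consistent case. By \cref{f:cvg}\ref{f:cvg_FixT}, the given DRA sequence $(x_n)_\nnn$ converges to some $x \in \Fix T_{A,B}$ with $P_Ax \in A\cap B$. Since $(A,A')$ and $(B,B')$ are locally identical around all of $A\cap B$, they are in particular locally identical around the single point $P_Ax \in A\cap B$. Thus the hypotheses of Lemma~\ref{l:adjust} are met verbatim, and it yields the existence of $n_0 \in \NN$ with $x_{n+1} = T_{A',B'}x_n$ for all $n \geq n_0$, which is exactly \eqref{e:global}.

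The remaining content of the statement is the symmetric claim: that the same conclusion holds when $(A,B)$ and $(A',B')$ swap roles, i.e., that any DRA sequence generated by $T_{A',B'}$ is eventually generated by $T_{A,B}$. For this I would first note that local identity is a symmetric relation: if $(A,A')$ is locally identical around $c$, so is $(A',A)$, and likewise for $(B,B')$. The only obstacle to applying Lemma~\ref{l:adjust} directly in the reversed direction is that the lemma invokes \cref{f:cvg} to obtain convergence, and \cref{f:cvg} requires $A'\cap B'\neq\varnothing$. But this follows from the hypothesis: local identity around a point $c\in A\cap B$ means $A\cap\ball{c}{\varepsilon}=A'\cap\ball{c}{\varepsilon}$ and $B\cap\ball{c}{\varepsilon}=B'\cap\ball{c}{\varepsilon}$ for some $\varepsilon\in\RPP$, so $c\in A'$ and $c\in B'$, whence $c\in A'\cap B'$. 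Moreover $(A',A)$ and $(B',B)$ are then locally identical around $c\in A'\cap B'$, so all hypotheses of Lemma~\ref{l:adjust} hold with the primed and unprimed sets interchanged. Applying that lemma to a DRA sequence $(x_n)_\nnn$ with respect to $(A',B')$ gives $n_0\in\NN$ with $x_{n+1}=T_{A,B}x_n$ for $n\geq n_0$, which is \eqref{e:global} with the roles exchanged.

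The only point requiring a moment of care is making sure that when we invoke Lemma~\ref{l:adjust} in either direction, the point around which we need local identity is a legitimate limit-point of the relevant shadow sequence lying in the relevant intersection; but this is automatic, since \cref{f:cvg}\ref{f:cvg_FixT} guarantees $P_{A}x\in A\cap B$ (resp.\ $P_{A'}x\in A'\cap B'$) and the hypothesis supplies local identity around every point of $A\cap B$ (resp., by symmetry, around the corresponding point of $A'\cap B'$). I do not anticipate any genuine difficulty here: the theorem is essentially a packaging of Lemma~\ref{l:adjust} plus the observation that the local-identity-around-a-set hypothesis is strong enough to survive the role reversal. The main (minor) obstacle is simply bookkeeping the symmetry cleanly and confirming $A'\cap B'\neq\varnothing$ so that \cref{f:cvg} applies in the reversed setting.
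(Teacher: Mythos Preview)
Your forward direction is exactly the paper's argument: invoke \cref{f:cvg}\ref{f:cvg_FixT} to get $P_Ax\in A\cap B$, then feed this into \cref{l:adjust}. No issues there.

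The reverse direction has a genuine gap. You correctly observe that $A\cap B\subseteq A'\cap B'$ (so $A'\cap B'\neq\varnothing$ and \cref{f:cvg} applies), and that local identity is a symmetric relation in the two sets. But when you run \cref{l:adjust} with the roles of $(A,B)$ and $(A',B')$ swapped, the point around which you need local identity is $P_{A'}x$, and \cref{f:cvg}\ref{f:cvg_FixT} only places this point in $A'\cap B'$. The \emph{hypothesis} of the theorem, however, gives local identity around points of $A\cap B$, not around arbitrary points of $A'\cap B'$. Your phrase ``by symmetry, around the corresponding point of $A'\cap B'$'' is precisely where the argument breaks: symmetry of the relation does not upgrade the \emph{location} hypothesis from $A\cap B$ to $A'\cap B'$. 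Nothing you have written rules out $A'\cap B'\supsetneq A\cap B$, in which case $P_{A'}x$ could land outside $A\cap B$ and you would have no local-identity information there.

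The paper closes this gap by actually proving $A\cap B=A'\cap B'$. From the local-identity assumption one gets, for each $c\in A\cap B$, that $(A\cap B)\cap\ball{c}{\varepsilon}=(A'\cap B')\cap\ball{c}{\varepsilon}$; thus $A\cap B$ and $A'\cap B'$ are locally identical around $A\cap B$, with $A\cap B\subseteq A'\cap B'$ and both sets closed convex. \cref{l:id}\ref{l:id_sub} then forces $A\cap B=A'\cap B'$. With that equality in hand, $P_{A'}x\in A'\cap B'=A\cap B$, the hypothesis applies at $P_{A'}x$, and \cref{l:adjust} finishes the reversed direction. You should add this step; it is not mere bookkeeping, and it is exactly where the convexity of the intersections is used.
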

\begin{proof}
By \cref{f:cvg}\ref{f:cvg_FixT}, $x_n \to x \in \Fix T_{A,B}$ with $P_Ax \in A\cap B$.
Now apply \cref{l:adjust}.

Let us exchange the roles of $T_{A,B}$ and $T_{A',B'}$ in \eqref{e:DRAseq} and \eqref{e:global}, 
i.e., $(\forall\nnn)$ $x_{n+1} =T_{A',B'}x_n$, and we shall prove that
\begin{equation}
\label{e:restrict'}
(\exists n_0 \in \NN)(\forall n \geq n_0)\quad x_{n+1} =T_{A,B}x_n.
\end{equation}
By the assumption on $A'$ and $B'$, we have $A\cap B \subseteq A'$, $A\cap B \subseteq B'$, 
and for all $c \in A\cap B$, there exists $\varepsilon \in \RPP$ such that
\begin{equation}
A\cap \ball{c}{\varepsilon} =A'\cap \ball{c}{\varepsilon}
\quad\text{and}\quad
B\cap \ball{c}{\varepsilon} =B'\cap \ball{c}{\varepsilon}.
\end{equation}
Then
\begin{equation}
(A\cap B)\cap \ball{c}{\varepsilon} =(A'\cap B')\cap \ball{c}{\varepsilon}.
\end{equation}
Therefore, $A\cap B$ and $A'\cap B'$ are locally identical around $A\cap B$.
Noting that $A\cap B$ and $A'\cap B'$ are closed convex, and $A\cap B \subseteq A'\cap B'$,
\cref{l:id}\ref{l:id_sub} gives $A\cap B =A'\cap B'$.
Next again by \cref{f:cvg}\ref{f:cvg_FixT}, $x_n \to x \in \Fix T_{A',B'}$ with $P_{A'}x \in A'\cap B' =A\cap B$.
By assumption, both $(A, A')$ and $(B, B')$ are locally identical around $P_{A'}x$,
and hence the proof is completed by applying \cref{l:adjust}. 
\end{proof}

In the following, we say that the DRA applied to $(A, B)$ \emph{converges finitely globally} 
if the sequence $(T_{A,B}^nx)_\nnn$ converges finitely for all $x \in X$.
\begin{theorem}
\label{t:finite}
Let $A$ and $B$ be nonempty closed convex subsets of $X$.
Then the DRA applied to $(A, B)$ converges \emph{finitely globally} provided one of the following holds:
\begin{enumerate}
\item 
\label{t:finite_AbdryB}
$A\cap B \neq\varnothing$ and $A\cap \bd B =\varnothing$;
equivalently, $A \subseteq \inte B$.
\item 
\label{t:finite_AB}
$A\cap \bd B \neq\varnothing$ and there exist two closed convex sets $A'$ and $B'$ in $X$ such that
both $(A, A')$ and $(B, B')$ are locally identical around $A\cap \bd B$, 
and that the DRA applied to $(A', B')$ converges finitely globally when $A'\cap B' \neq\varnothing$. 
\item 
\label{t:finite_AintB}
$A\cap \inte B \neq\varnothing$, $A\cap \bd B \neq\varnothing$ and there exist two closed convex sets $A'$ and $B'$ in $X$ such that
both $(A, A')$ and $(B, B')$ are locally identical around $A\cap \bd B$, 
and that the DRA applied to $(A', B')$ converges finitely globally when $A'\cap \inte B' \neq\varnothing$.
\end{enumerate}
\end{theorem}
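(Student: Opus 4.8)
The common thread is a dichotomy on the location of the shadow limit, which I would set up first. Fix $x_0\in X$ and let $(x_n)_\nnn$ be the (single-valued, since $A$ and $B$ are convex) DRA sequence with respect to $(A,B)$; by \cref{f:cvg}\ref{f:cvg_FixT}, $x_n\to x\in\Fix T_{A,B}$ and $P_Ax_n\to c:=P_Ax\in A\cap B$. Since $B$ is closed, $A\cap B=(A\cap\inte B)\cup(A\cap\bd B)$, so either $c\in A\cap\inte B$ or $c\in A\cap\bd B$. The key observation, which I would prove once and reuse in all three parts, is that \emph{if $c\in\inte B$ then $(x_n)_\nnn$ converges finitely}. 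Indeed, $x\in\Fix T_{A,B}$ and \eqref{e:xFixT} give $P_B(2c-x)=P_B(R_Ax)=c$; since $P_B$ sends points outside $B$ to $\bd B$ (a consequence of \cref{f:proj}\ref{f:proj_cvx}) and $c\in\inte B$, we must have $2c-x\in B$ and $2c-x=c$, i.e.\ $x=c$. Hence $R_Ax_n=2P_Ax_n-x_n\to 2c-x=c\in\inte B$ and $P_Ax_n\to c\in\inte B$, so for all large $n$ both $R_Ax_n\in B$ and $P_Ax_n\in B$; then $P_BR_Ax_n=R_Ax_n$, so $x_{n+1}=x_n-P_Ax_n+R_Ax_n=P_Ax_n\in A\cap B$, and $x_{n+1}\in A\cap B$ yields $x_{n+2}=T_{A,B}x_{n+1}=x_{n+1}$. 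This argument uses only convexity of $A$ and $B$.

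For \ref{t:finite_AbdryB} I would first record the equivalence $[A\cap B\neq\varnothing$ and $A\cap\bd B=\varnothing]\Leftrightarrow A\subseteq\inte B$: ``$\Leftarrow$'' is immediate, and for ``$\Rightarrow$'', if some $a\in A$ were not in $\inte B$ then (using $A\cap\bd B=\varnothing$ and closedness of $B$) $a\notin B$, so picking $c\in A\cap B$ the segment $[c,a]\subseteq A$ meets $\bd B$ at the point corresponding to $\sup\menge{t\in[0,1]}{(1-t)c+ta\in B}$, contradicting $A\cap\bd B=\varnothing$. Given $A\subseteq\inte B$, every shadow limit satisfies $c=P_Ax\in A\subseteq\inte B$, so the interior case above applies and the DRA converges finitely globally.

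For \ref{t:finite_AB} and \ref{t:finite_AintB}, if $c\in A\cap\inte B$ we are done by the interior case, so suppose $c\in A\cap\bd B$. Then $(A,A')$ and $(B,B')$ are locally identical around $c$, so for the corresponding radius $\varepsilon$ we have $c\in A\cap\ball{c}{\varepsilon}=A'\cap\ball{c}{\varepsilon}$ and likewise $c\in B'$, whence $c\in A'\cap B'$. \Cref{l:adjust} then yields $n_0\in\NN$ with $x_{n+1}=T_{A',B'}x_n$ for all $n\ge n_0$, so the tail $(x_{n_0+k})_{k\in\NN}=(T_{A',B'}^{k}x_{n_0})_{k\in\NN}$ is a DRA sequence for $(A',B')$. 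For \ref{t:finite_AB}, $A'\cap B'\neq\varnothing$, so by hypothesis this tail---hence $(x_n)_\nnn$---converges finitely. For \ref{t:finite_AintB} one additionally checks $A'\cap\inte B'\neq\varnothing$: taking $c_0\in A\cap\bd B$ with local-identity radius $\varepsilon$ and $c_1\in A\cap\inte B$, for sufficiently small $t\in\,]0,1]$ the point $p:=(1-t)c_0+tc_1$ lies in $A$ (convexity of $A$), in $\inte B$ (convexity of $B$, since $c_1\in\inte B$ and $c_0\in B$), and in $\inte\ball{c_0}{\varepsilon}$; then $p\in A\cap\ball{c_0}{\varepsilon}=A'\cap\ball{c_0}{\varepsilon}\subseteq A'$, and a small enough ball about $p$ lies in $B\cap\ball{c_0}{\varepsilon}=B'\cap\ball{c_0}{\varepsilon}\subseteq B'$, so $p\in\inte B'$. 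The hypothesis then again gives finite convergence of the tail, hence of $(x_n)_\nnn$.

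The main obstacle is the interior-case observation: recognizing that $P_Ax\in\inte B$ forces $x=P_Ax$ and that the iteration then collapses to $x_{n+1}=P_Ax_n$ is what decouples the ``good'' part $A\cap\inte B$ from the boundary and lets everything reduce to \cref{l:adjust}. The remaining ingredients---the equivalence in \ref{t:finite_AbdryB} and the nonemptiness of $A'\cap\inte B'$ in \ref{t:finite_AintB}---are routine convexity arguments; one need only take care to choose the local-identity radius uniformly (the minimum of the two radii for the pairs $(A,A')$ and $(B,B')$) at the relevant boundary point.
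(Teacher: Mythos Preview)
Your proof is correct, and in fact somewhat cleaner than the paper's. The main structural difference is your ``interior case'' observation: you prove directly from first principles that if $c=P_Ax\in\inte B$ then $x=c$ and the iteration stabilizes in two steps. The paper instead invokes \cref{f:cvg}\ref{f:cvg_AnB}, which requires $0\in\inte(A-B)$; this forces the paper, in part~\ref{t:finite_AB}, to split into the subcases $A\cap\inte B=\varnothing$ (handled via \cref{t:global}, using local identity around all of $A\cap B=A\cap\bd B$) and $A\cap\inte B\neq\varnothing$ (handled via \cref{f:cvg}\ref{f:cvg_AnB} and \cref{l:adjust}). Your direct argument eliminates this case split entirely and uses only \cref{l:adjust} at the single point $c$, never needing \cref{t:global} or the Slater-type condition $0\in\inte(A-B)$. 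The equivalence in \ref{t:finite_AbdryB} and the verification that $A'\cap\inte B'\neq\varnothing$ in \ref{t:finite_AintB} are essentially identical to the paper's. What your route buys is self-containment and a single unified dichotomy; what the paper's route buys is reuse of \cref{f:cvg}\ref{f:cvg_AnB} and \cref{t:global}, which are of independent interest elsewhere.
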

\begin{proof}
Let $(x_n)_\nnn$ be a DRA sequence with respect to $(A, B)$. 

\ref{t:finite_AbdryB}: It follows from $A\cap B \neq\varnothing$, $A\cap \bd B =\varnothing$ and the closedness of $B$ that
$A\cap \inte B =A\cap B \neq\varnothing$, and so $0 \in \inte(A -B)$. 
By \cref{f:cvg}\ref{f:cvg_AnB}, $x_n \to x \in A\cap B =A\cap \inte B$ finitely.

Now if $A \subseteq \inte B$, then $A\cap B =A \neq \varnothing$, 
and $A\cap \bd B \subseteq \inte B\cap \bd B =\varnothing$, which implies $A\cap \bd B =\varnothing$.
Conversely, assume that $A\cap B \neq\varnothing$ and $A\cap \bd B =\varnothing$.
Let $a \in A$. Then $a \notin \bd B$. 
We have to show $a \in \inte B$. Suppose to the contrary that $a \notin \inte B$.
Pick $b \in A\cap B$. By convexity, $\left[a, b\right] :=\menge{\lambda a +(1 -\lambda)b}{0 \leq \lambda \leq 1}\subseteq A$,
and so $\left[a, b\right]\cap \bd B =\varnothing$, 
which is impossible since $a \notin B$ and $b \in B$.
Hence, $a \in \inte B$ for all $a \in A$. This means $A \subseteq \inte B$.

\ref{t:finite_AB}: By assumption, $A\cap \bd B \subseteq A'\cap B'$,
and so the DRA applied to $(A', B')$ converges finitely globally. 
If $A\cap \inte B =\varnothing$, 
then both $(A, A')$ and $(B, B')$ are locally identical around $A\cap \bd B =A\cap B$ (using the closedness of $B$),
and using \cref{t:global}, 
\begin{equation}
\label{e:exchange}
(\exists n_0 \in \NN)(\forall n \geq n_0)\quad x_{n+1} =T_{A',B'}x_n,
\end{equation}
which implies the finite convergence of $(x_n)_\nnn$ due to the finite convergence of the DRA applied to $(A', B')$.

Next assume that $A\cap \inte B \neq\varnothing$. 
Then $\inte(A -B) \neq\varnothing$. By \cref{f:cvg}\ref{f:cvg_AnB}, 
$x_n \to x \in A\cap B$, and this convergence is finite when $x \in A\cap \inte B$.
It thus suffices to consider the case when $x \in A\cap \bd B$.
Then $(A, A')$ and $(B, B')$ are locally identical around $x =P_Ax$, and by \cref{l:adjust},
\eqref{e:exchange} holds. Using again the finite convergence of the DRA applied to $(A', B')$, we are done.

\ref{t:finite_AintB}: First, we show that $A'\cap \inte B' \neq\varnothing$.
Let $c \in A\cap \bd B$. By the assumption on $A'$ and $B'$, there is $\varepsilon \in \RPP$ such that 
\begin{equation}
A\cap \ball{c}{\varepsilon} =A'\cap \ball{c}{\varepsilon}
\quad\text{and}\quad
B\cap \ball{c}{\varepsilon} =B'\cap \ball{c}{\varepsilon}.
\end{equation}
Now let $d \in A\cap \inte B$. Then $c \neq d$, and by the convexity of $A$ and $B$, 
\cite[Proposition~3.35]{BC11} implies $\left]c, d\right] :=\menge{\lambda c +(1 -\lambda)d}{0 \leq \lambda <1} \subseteq A\cap \inte B$.
Therefore, $\left]c, d\right] \cap \inte\ball{c}{\varepsilon} \subseteq A\cap \ball{c}{\varepsilon} =A'\cap \ball{c}{\varepsilon} \subseteq A'$ and $\left]c, d\right] \cap \inte\ball{c}{\varepsilon} \subseteq \inte(B\cap \ball{c}{\varepsilon}) =\inte(B'\cap \ball{c}{\varepsilon}) \subseteq \inte B'$.
We deduce that $A'\cap \inte B' \neq\varnothing$. 
By assumption, the DRA applied to $(A', B')$ converges finitely globally.
Now argue as the case where $A\cap \inte B \neq\varnothing$ in the proof of part \ref{t:finite_AB}.  
\end{proof}

\begin{corollary}
\label{c:linear}
Let $A$ and $B$ be closed convex subsets of $X$ such that $A\cap B \neq\varnothing$.
Suppose that both $(A, \aff A)$ and $(B, \aff B)$ are locally identical around $A\cap \bd B$ when $A\cap \bd B \neq\varnothing$. 
Then every DRA sequence $(x_n)_\nnn$ with respect to $(A, B)$ converges \emph{linearly} 
with rate $c_F(\aff A -\aff A, \aff B -\aff B)$ to a point $x \in \Fix T_{A,B}$ with $P_Ax \in A\cap B$,
where $c_F(U, V)$ is the \emph{cosine of the Friedrichs angle} between two subspaces $U$ and $V$ defined by
\begin{equation}
c_F(U, V) :=\sup\menge{|\scal{u}{v}|}{u \in U\cap(U\cap V)^\perp, v \in V\cap(U\cap V)^\perp, \|u\| \leq 1, \|v\| \leq 1}.
\end{equation}
\end{corollary}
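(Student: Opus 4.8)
The plan is to split the argument according to where the shadow limit $P_Ax$ sits, handling one case by a direct finite-convergence argument and reducing the other, after finitely many iterations, to the Douglas--Rachford algorithm for the two affine subspaces $\aff A$ and $\aff B$ --- for which linear convergence at the rate $c_F(\aff A-\aff A,\aff B-\aff B)$ is classical (for linear subspaces see \cite{HLN14,BBNPW14}; the affine case follows by translation). First I would invoke \cref{f:cvg}\ref{f:cvg_FixT} to obtain $x_n\to x\in\Fix T_{A,B}$ with $c:=P_Ax\in A\cap B$; this already yields the two structural assertions of the statement, so only the linear rate is at issue. Since $B$ is closed and convex, $c$ lies either in $\inte B$ or in $\bd B$, and I would treat these two cases in turn.

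Suppose first that $c\in\inte B$. From $x\in\Fix T_{A,B}$ and \eqref{e:xFixT} (note $P_A$ and $P_B$ are single-valued by convexity) we get $c=P_BR_Ax=P_B(2c-x)$, so \eqref{e:proj_cvx} gives $c-x\in N_B(c)$; but $N_B(c)=\{0\}$ for the interior point $c$ of $B$, whence $x=c\in A\cap\inte B$. Therefore $0\in\inte(A-B)$, and \cref{f:cvg}\ref{f:cvg_AnB} shows that $(x_n)_\nnn$ converges \emph{finitely} to $x$. Since moreover $\inte B\neq\varnothing$ forces $\aff B=X$, we have $c_F(\aff A-\aff A,\aff B-\aff B)=c_F(\aff A-\aff A,X)=0$, so this finite convergence is in particular linear with the asserted rate.

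Now suppose $c\in\bd B$. Then $c\in A\cap\bd B$, in particular $A\cap\bd B\neq\varnothing$, so the hypothesis applies and $(A,\aff A)$ and $(B,\aff B)$ are locally identical around $c=P_Ax$. Since $\aff A$ and $\aff B$ are closed convex, \cref{l:adjust} (applied with $A':=\aff A$ and $B':=\aff B$) produces $n_0\in\NN$ with $x_{n+1}=T_{\aff A,\aff B}x_n$ for all $n\geq n_0$. Hence $(x_n)_{n\geq n_0}$ is a Douglas--Rachford sequence for the pair of affine subspaces $(\aff A,\aff B)$, whose intersection is nonempty (it contains $c$); by the classical linear-convergence result for two affine subspaces this tail converges linearly with rate $c_F(\aff A-\aff A,\aff B-\aff B)$, and since its limit is $x$, so does the whole sequence $(x_n)_\nnn$, at that rate.

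I expect the interior case to be the main obstacle: the point is to realize that $P_Ax\in\inte B$ pins the fixed point $x$ down to its shadow $c$ (thereby putting us in a position to apply \cref{f:cvg}\ref{f:cvg_AnB} and deduce finite convergence), and then to notice that there $\aff B=X$, so that the rate formula collapses to $0$ and the two cases merge into one uniform conclusion. The boundary case is essentially bookkeeping, the hypothesis being tailored precisely so that \cref{l:adjust} can be invoked at the shadow limit $P_Ax$, after which the known subspace theory finishes the argument.
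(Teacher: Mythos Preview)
Your proof is correct and follows the same two-pronged strategy as the paper: one case is disposed of by finite convergence, and in the other the sequence is shown to eventually coincide with the DRA for the affine hulls, whence the rate comes from \cite{BBNPW14}.

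The organization differs slightly. The paper splits on the \emph{global} dichotomy $A\cap\bd B=\varnothing$ versus $A\cap\bd B\neq\varnothing$, invoking \cref{t:finite}\ref{t:finite_AbdryB} and \cref{t:global} respectively. You split instead on the \emph{local} dichotomy $P_Ax\in\inte B$ versus $P_Ax\in\bd B$, and in the boundary case you apply \cref{l:adjust} directly (rather than the wrapper \cref{t:global}, which formally requires local identicality around all of $A\cap B$; the paper's use of \cref{t:global} is justified only after noting that the hypothesis at a boundary point forces $\inte B=\varnothing$, hence $A\cap B=A\cap\bd B$, a step left implicit there). Your interior case is also a bit more explicit: you pin down $x=c$ via the normal-cone argument and then observe that $\inte B\neq\varnothing$ forces $\aff B=X$, so the asserted rate is $0$ and finite convergence matches it. This is a clean way to reconcile the two cases under one rate formula, which the paper glosses over.
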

\begin{proof}
If $A\cap \bd B =\varnothing$, then by \cref{t:finite}\ref{t:finite_AbdryB}, we are done.
Now assume that $A\cap \bd B \neq\varnothing$. 
By assumption and \cref{t:global},
\begin{equation}
(\exists n_0 \in \NN)(\forall n \geq n_0)\quad x_{n+1} =T_{\aff A,\aff B}x_n.
\end{equation}
Since we work with a finite-dimensional space, \cite[Corollary~4.5]{BBNPW14} completes the proof.
\end{proof}

\begin{example}
Suppose that $X =\RR^3$, that $A =\left[(2, 1, 2), (-2, 1, -2)\right]$, 
and that $B =\menge{(\alpha, \beta, \gamma) \in \RR^3}{|\alpha| \leq 2, |\beta| \leq 2, \gamma =1}$.
Then $A\cap B =\{(1, 1, 1)\} \in \reli A\cap \reli B$.
By \cite[Theorem~4.14]{Pha16}, every DRA sequence with respect to $(A, B)$ converges linearly.
Furthermore, 
$\aff A =\menge{(\alpha, \beta, \gamma) \in \RR^3}{\alpha - \gamma =0, \beta =1}$, 
$\aff B =\menge{(\alpha, \beta, \gamma) \in \RR^3}{\gamma =1}$, 
$\aff A -\aff A =\menge{(\alpha, \beta, \gamma) \in \RR^3}{\alpha - \gamma =0, \beta =0}$,
$\aff B -\aff B =\menge{(\alpha, \beta, \gamma) \in \RR^3}{\gamma =0}$,
and both $(A, \aff A)$ and $(B, \aff B)$ are locally identical around $A\cap \bd B =A\cap B$.
By applying \cref{c:linear}, the linearly rate is $c_F(\aff A
-\aff A, \aff B -\aff B) =1/\sqrt{2}$. 
\end{example}

\begin{proposition}[Finite convergence of the DRA in the
(hyperplane or halfspace,ball) case]
\label{p:H-B}
Let $A$ be either a hyperplane or a halfspace, and $B$ be a closed ball of $X$ such that $A\cap \inte B \neq\varnothing$.
Then every DRA sequence $(x_n)_\nnn$ with respect to $(A, B)$ converges in \emph{finitely many steps} to a point in $A\cap B$.
\end{proposition}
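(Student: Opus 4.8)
The plan is to reduce everything to a planar computation and then analyse the resulting dynamics. If $\dim X=0$ the statement is trivial, so assume $\dim X\ge 1$; after a translation we may take $B=\ball{0}{\rho}$, and we fix a unit vector $u$ with $A=\menge{y\in X}{\scal{y}{u}=\eta}$ or $A=\menge{y\in X}{\scal{y}{u}\le\eta}$, noting that $A\cap\inte B\neq\varnothing$ forces $|\eta|<\rho$. By the explicit projection formulas of \cref{ex:proj_h} and \cref{ex:proj_ball}, the subspace $V:=\lspan\{u,x_0\}$ (of dimension $1$ or $2$) is invariant under $P_A$, $R_A$ and $P_B$, hence under $T_{A,B}$; since $x_0\in V$, the whole DRA sequence lies in $V$, and $A\cap\inte B\neq\varnothing$ is inherited by $A\cap V$ and $B\cap V$ because $d(0,A)=d_V(0,A\cap V)$. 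The case $\dim V=1$ is a short one-dimensional check, so we may assume $X=\RR^2$, that $B$ is a disk, and (after a rotation) that $A=\RR\times\{\eta\}$, $A=\RR\times[\eta,+\infty)$ or $A=\RR\times(-\infty,\eta]$, with $0\le\eta<\rho$, the one-sided constraint on $\eta$ in the halfplane cases being exactly $A\cap\inte B\neq\varnothing$.

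Next I would dispose of the generic situation using \cref{f:cvg}. A direct computation shows that $A-B$ is a horizontal slab (if $A$ is a line) or a halfplane (if $A$ is a halfplane), and $0$ lies in its interior precisely because of the constraint on $\eta$; hence $0\in\inte(A-B)$, so $N_{A-B}(0)=\{0\}$ and $\Fix T_{A,B}=A\cap B$, and \cref{f:cvg}\ref{f:cvg_AnB} gives $x_n\to x\in A\cap B$ with the convergence being finite whenever $x\in A\cap\inte B$. Thus it suffices to rule out the possibility that $(x_n)_\nnn$ never enters $A\cap B$ while $x\in A\cap\bd B$; in the planar picture $x$ is then an endpoint of the chord $A\cap B$, and we may write $x=(\alpha,\eta)$ with $\alpha:=\sqrt{\rho^2-\eta^2}>0$.

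The core is a dynamical analysis near $x$, based on $T_{A,B}=\Id-P_A+P_BR_A$ and the formulas of \cref{ex:proj_h} and \cref{ex:proj_ball}. Writing $x_n=(s_n,t_n)$: when $R_Ax_n\in B$ one has $x_{n+1}=P_Ax_n=(s_n,\eta)$, which lies in $A\cap B$ precisely when $|s_n|\le\alpha$; otherwise $P_BR_Ax_n\in\bd B$ and $s_{n+1}=\mu_ns_n$ with $\mu_n:=\rho/\|R_Ax_n\|<1$. Hence $(|s_n|)_\nnn$ is non-increasing, strictly decreasing at every step of the second kind. Two observations drive the argument: (a) if two consecutive steps are of the first kind, then $x_{n+1}\in A\cap B$ and the sequence is constant thereafter; (b) since $\|x_n-q\|$ is non-increasing for every $q\in\Fix T_{A,B}=A\cap B$ (a standard consequence of firm nonexpansiveness, underlying \cref{f:cvg}), applying this with $q=(\alpha,\eta)$ and $q=(-\alpha,\eta)$ shows that $V_n:=s_n^2+(t_n-\eta)^2-\alpha^2$ is non-increasing and $\ge 0$. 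One then shows that the ``bad'' regime — staying forever in the second kind of step with $|s_n|\downarrow\alpha$ — cannot occur: linearising the planar map at $x$ produces a contracting rotation (eigenvalues of modulus $\eta/\rho<1$, which for $\eta>0$ are non-real with rotation angle $\arctan(\alpha/\eta)\neq 0$; for $\eta=0$ they vanish and the corner is reached in at most two steps), and such a rotation is incompatible with $V_n$ being a non-negative non-increasing sequence that never reaches $0$. Therefore, after finitely many steps, either the sequence has already entered $A\cap B$ or $|s_n|<\alpha$; in the latter case $|s_n|$ can no longer increase, and since $x_n\to x$ forces $t_n\to\eta$, a step of the first kind eventually occurs and yields $x_{n+1}=(s_n,\eta)\in A\cap\inte B$, a fixed point. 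The two halfplane cases run on the same scheme, the only new feature being that on the side where $P_A=\Id$ one has $T_{A,B}=P_B$, so the iterate is thrown onto $\bd B$ in a single step.

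The main obstacle is precisely this last point — ruling out an asymptotic approach to an endpoint of the chord without finite termination, via the contracting-rotation versus monotone-quantity conflict; everything else is a routine reduction or a finite inspection of the explicit planar formulas.
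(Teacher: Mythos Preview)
Your route is quite different from the paper's. The paper never reduces to the plane or studies the local dynamics near a chord endpoint; instead, after normalising to $A=X\times\{0\}$ and $B=\ball{(0,\theta)}{1}$, it observes that near $A\cap B$ the ball $B$ is locally identical with the epigraph $B'=\epi f$ of $f(x)=\theta-\sqrt{1-\|x\|^2}$, invokes \cref{t:global} so that the DRA for $(A,B)$ eventually coincides with the DRA for $(A,B')$, and then finishes with the already-established (hyperplane,\,epigraph) result \cref{f:P-E}. The symmetric case $\theta=0$ is handled by a short direct argument that again feeds into \cref{f:P-E}. This is short and entirely modular, and it completely bypasses the boundary analysis you attempt.

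Your planar reduction via $V=\lspan\{u,x_0\}$ is valid, the formula $s_{n+1}=\mu_n s_n$ and hence the monotonicity of $|s_n|$ are correct, and the linearisation at the endpoint $x=(\alpha,\eta)$ is indeed a scaled rotation with contraction factor $\eta/\rho$ and angle $\phi=\arctan(\alpha/\eta)$. However, the contradiction you assert --- ``such a rotation is incompatible with $V_n$ being a non-negative non-increasing sequence that never reaches $0$'' --- is not a contradiction: a contracting rotation about $(\alpha,\eta)$ is perfectly consistent with $V_n=\|x_n-(0,\eta)\|^2-\alpha^2$ decreasing monotonically to $0$. The genuine tension is elsewhere: from $|s_n|\downarrow\alpha$ and $s_n\to\alpha>0$ one gets $s_n-\alpha\ge 0$ eventually, whereas the rotation forces the first coordinate of $x_n-x$ to become negative after roughly $\pi/(2\phi)$ consecutive second-kind steps. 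To turn this into a proof you still have to (i) show that no first-kind step is triggered during that window --- to first order the first-kind condition reads $\alpha a_n\le\eta b_n$, which indeed fails while the angle of $x_n-x$ stays in $[-\pi/2,0]$, but this needs to be written out together with the higher-order control --- and (ii) note that your closing clause ``or $|s_n|<\alpha$'' is vacuous in the scenario $x_n\to(\alpha,\eta)$, since $|s_n|\ge\alpha$ is forced; so the termination argument you attach to that case is never reached. As written, the key step does not close.
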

\begin{proof}
If $\dim X =0$, i.e., $X =\{0\}$, then the result is trivial,
so we will work in the space $X\times \RR$ with $\dim X \geq 0$, and denote by $(z_n)_\nnn$ the DRA sequence.
We just prove the the result for the case when $A$ is a
hyperplane because the case when $A$ is a halfspace is similar.
Without loss of generality, we assume that $A =X\times \{0\}$ and that $B =\ball{(0,\theta)}{1}$ is the closed ball of radius $1$ and center $(0, \theta) \in X\times \RR$ with 
$0 \leq \theta < 1$.
Nothing that 
\begin{equation}
B =\menge{(x, \rho) \in X\times \RR}{\theta -\sqrt{1 -\|x\|^2} \leq \rho \leq \theta +\sqrt{1 -\|x\|^2}},
\end{equation}
we write $B =B_-\cup B_+$, where
\begin{subequations}
\begin{align}
B_- &=\menge{(x, \rho) \in X\times \RR}{\theta -\sqrt{1 -\|x\|^2} \leq \rho \leq \theta}, \\
B_+ &=\menge{(x, \rho) \in X\times \RR}{\theta \leq \rho \leq \theta +\sqrt{1 -\|x\|^2}}.
\end{align}
\end{subequations}
We distinguish two cases.

\emph{Case 1:} $\theta =0$. Then the two halves $B_- \subseteq X\times \RM$ and $B_+ \subseteq X\times \RP$ of the ball $B$ are symmetric with respect to the hyperplane $A$. 
By symmetry, we can and do assume that $z_0 =(x_0, \rho_0) \in X\times \RP$.
Now for any $z =(x, \rho) \in X\times \RP$, we have $P_Az =(x, 0)$, $R_Az =(x, -\rho)$, and by \cref{ex:proj_ball},  
\begin{equation}
P_BR_Az =\delta(x, -\rho)\quad\text{with}\quad \delta :=\frac{1}{\max\{\sqrt{\|x\|^2 +\rho^2}, 1\}} \leq 1,
\end{equation}
which gives 
\begin{equation}
T_{A,B}z =z -P_Az +P_BR_Az =(x, \rho) -(x, 0) +\delta(x, -\rho) =(\delta x, (1-\delta)\rho) \in X\times \RP.
\end{equation}
Hence, $(\forall\nnn)$ $z_n \in X\times \RP$. 
From $B_- =\menge{(x, \rho) \in X\times \RR}{-\sqrt{1 -\|x\|^2} \leq \rho \leq 0}$, we have
\begin{equation}
B_-\subseteq B' :=\epi f :=\menge{(x, \rho) \in X\times \RR}{f(x) \leq \rho},
\end{equation}
where $f\colon X\to \RR\colon x\mapsto -\sqrt{1 -\|x\|^2}$. 
Since $R_Az_n \in X\times \RM$, $P_BR_Az_n =P_{B_-}R_Az_n =P_{B'}R_Az_n$, and so 
\begin{equation}
(\forall\nnn)\quad z_{n+1} :=T_{A,B}z_n =T_{A,B'}z_n.
\end{equation}
According to \cref{f:P-E}, $(z_n)_\nnn$ converges finitely to a point in $A\cap B' =A\cap B$.

\emph{Case 2:} $0 <\theta <1$. 
Let $B' :=\epi f$, where $f\colon X\to \RR\colon x\mapsto \theta -\sqrt{1 -\|x\|^2}$.
Then $B \subseteq B'$, $A\cap B =A\cap B' =\menge{(x, 0) \in X\times \RR}{\theta -\sqrt{1 -\|x\|^2} \leq 0} \subseteq A =X\times \{0\}$, 
and $B'\smallsetminus B =\menge{(x, \rho) \in X\times \RR}{\theta +\sqrt{1 -\|x\|^2} <\rho} \subseteq X\times \RPP$,
which implies $(\forall c \in A\cap B)$ $d_{B'\smallsetminus B}(c) >0$.
Following \cref{l:id}\ref{l:id_sup},
$B$ and $B'$ are locally identical around $A\cap B$. 
By using \cref{t:global}, 
\begin{equation}
(\exists n_0 \in \NN)(\forall n \geq n_0)\quad z_{n+1} =T_{A,B'}z_n,
\end{equation}
and again by \cref{f:P-E}, we are done.
\end{proof}

\begin{remark}
It follows from \cref{ex:epi_infinite} that the conclusion of \cref{p:H-B} no longer holds without Slater's condition $A\cap \inte B \neq\varnothing$. 
\end{remark}

\begin{proposition}
\label{p:intersect}
Let $A =\bigcap_{i \in I} A_i$ and $B =\bigcap_{j \in J} B_j$ 
be finite intersections of closed convex sets in $X$ such that $A\cap B \neq\varnothing$.
Suppose that $(\forall x \in \Fix T_{A,B})(\exists i \in I)(\exists j \in J)$ 
both $(A, A_i)$ and $(B, B_j)$ are locally identical around $P_Ax$. 
Then the following holds for any DRA sequence $(x_n)_\nnn$ with respect to $(A, B)$:
\begin{equation}
\label{e:restrict_ij}
(\exists i \in I)(\exists j \in J)(\exists n_0 \in \NN)(\forall n \geq n_0)\quad x_{n+1} =T_{A_i,B_j}x_n.
\end{equation}
\end{proposition}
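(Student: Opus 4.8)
The plan is to reduce the statement directly to \cref{l:adjust} (or, equivalently, \cref{l:expand}), since the proposition is essentially a packaging of that machinery with the ``active'' constraints chosen according to the limit of the sequence. First I would record that, being finite intersections of closed convex sets, $A$ and $B$ are themselves closed and convex, and $A\cap B\neq\varnothing$ by hypothesis; hence \cref{f:cvg}\ref{f:cvg_FixT} applies and the given DRA sequence $(x_n)_\nnn$ converges to some $x\in\Fix T_{A,B}$ with $P_Ax\in A\cap B$. Moreover $P_Ax$ is a singleton by \cref{f:proj}\ref{f:proj_cvx}, so there is no ambiguity in speaking of local identity ``around $P_Ax$''.

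Next I would apply the standing hypothesis of the proposition at this particular limit point $x\in\Fix T_{A,B}$: it supplies indices $i\in I$ and $j\in J$ such that both $(A,A_i)$ and $(B,B_j)$ are locally identical around $P_Ax$. Since $A_i$ and $B_j$ are closed convex sets, the pair $(A',B'):=(A_i,B_j)$ meets exactly the hypotheses of \cref{l:adjust} relative to the sequence $(x_n)_\nnn$ and its limit $x$ (indeed, $P_Ax\in A\cap B$ was established in the previous step). Consequently \cref{l:adjust} yields an $n_0\in\NN$ with $x_{n+1}=T_{A_i,B_j}x_n$ for all $n\geq n_0$, which is precisely \eqref{e:restrict_ij}. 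Alternatively, since $A\subseteq A_i$ and $B\subseteq B_j$ automatically, one may invoke \cref{l:expand} instead, its convergence hypothesis again being furnished by \cref{f:cvg}\ref{f:cvg_FixT}.

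I do not expect a genuine obstacle in this argument; the only point demanding care is purely logical, namely that the selected pair $(i,j)$ depends on the limit $x$ and hence on the starting point $x_0$, which is why the indices in \eqref{e:restrict_ij} are quantified existentially rather than uniformly. It may also be worth a sentence explaining that $A$ and $B$ being convex is what lets us use \cref{f:cvg} to guarantee convergence in the first place, so that a limit $x\in\Fix T_{A,B}$ actually exists to which the hypothesis can be applied.
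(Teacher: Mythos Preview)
Your proposal is correct and matches the paper's proof essentially line for line: the paper also invokes \cref{f:cvg}\ref{f:cvg_FixT} to obtain $x_n\to x\in\Fix T_{A,B}$ with $P_Ax\in A\cap B$, selects $i,j$ from the hypothesis, and then applies \cref{l:expand} (using $A\subseteq A_i$, $B\subseteq B_j$). Your alternative route via \cref{l:adjust} is equally valid since $A_i,B_j$ are closed convex, but the paper opts for \cref{l:expand}.
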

\begin{proof}
Since $A$ and $B$ are closed convex, \cref{f:cvg}\ref{f:cvg_FixT} gives 
$x_n \to x \in \Fix T_{A,B}$ with $P_Ax \in A\cap B$.
By assumption, $(\exists i \in I)(\exists j \in J)$ 
both $(A, A_i)$ and $(B, B_j)$ are locally identical around $P_Ax$.
Noting that $A\subseteq A_i$, $B\subseteq B_j$, the conclusion follows from \cref{l:expand}.
\end{proof}

\begin{corollary}
\label{c:int_intersect}
Let $A =\bigcap_{i \in I} A_i$ and $B =\bigcap_{j \in J} B_j$ 
be finite intersections of closed convex sets in $X$ such that $0 \in \inte(A -B)$.
Suppose that $(\forall x \in A\cap B)(\exists i \in I)(\exists j \in J)$ 
both $(A, A_i)$ and $(B, B_j)$ are locally identical around $x$. 
Then \eqref{e:restrict_ij} holds for any DRA sequence $(x_n)_\nnn$ with respect to $(A, B)$.
\end{corollary}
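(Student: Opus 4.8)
The plan is to obtain this corollary as an immediate specialization of \cref{p:intersect}. First I would record that $0\in\inte(A-B)$ forces $A\cap B\neq\varnothing$: indeed $0\in\inte(A-B)\subseteq A-B$ produces $a\in A$ and $b\in B$ with $a-b=0$, i.e., $a=b\in A\cap B$. Since a finite intersection of closed convex sets is closed convex, both $A$ and $B$ are closed convex, so the standing hypotheses of \cref{p:intersect} hold and, by \cref{f:proj}\ref{f:proj_cvx}, $P_A$ is single-valued.

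The only point that genuinely needs checking is that the assumption quantified over $A\cap B$ here entails the assumption quantified over $\Fix T_{A,B}$ (via $P_A$) required by \cref{p:intersect}. For this I would argue that $P_A(\Fix T_{A,B})\subseteq A\cap B$: given $x\in\Fix T_{A,B}$, the projection $P_Ax$ is a singleton by the single-valuedness just noted, and then \eqref{e:xFixT'} (equivalently \cref{f:cvg}\ref{f:cvg_FixT}) gives $P_Ax\in A\cap B$. Consequently, for each $x\in\Fix T_{A,B}$ one may apply the hypothesis of the corollary to the point $P_Ax\in A\cap B$ to obtain indices $i\in I$ and $j\in J$ such that both $(A,A_i)$ and $(B,B_j)$ are locally identical around $P_Ax$.

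Having verified that the hypotheses of \cref{p:intersect} are satisfied, I would invoke it directly to conclude that every DRA sequence $(x_n)_\nnn$ with respect to $(A,B)$ satisfies \eqref{e:restrict_ij}. There is no serious obstacle here: the entire content is the observation that, for convex $A$, the ``shadow'' $P_Ax$ of any $x\in\Fix T_{A,B}$ lands in $A\cap B$, so that local identity around all of $A\cap B$ is already more than \cref{p:intersect} requires; the stronger hypothesis $0\in\inte(A-B)$ is used only to guarantee consistency $A\cap B\neq\varnothing$ (and, incidentally, via \cref{f:cvg}\ref{f:cvg_AnB}, that the DRA sequence actually converges to a point of $A\cap B$).
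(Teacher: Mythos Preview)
Your proposal is correct and, like the paper, reduces directly to \cref{p:intersect}. The only difference is cosmetic: the paper invokes \cref{f:cvg}\ref{f:cvg_AnB} (using $0\in\inte(A-B)$) to get the DRA limit $x\in A\cap B$ and hence $P_Ax=x\in A\cap B$, whereas you use only $A\cap B\neq\varnothing$ together with \eqref{e:xFixT'} to obtain $P_A(\Fix T_{A,B})\subseteq A\cap B$; your observation that the hypothesis $0\in\inte(A-B)$ is stronger than needed for this reduction is accurate.
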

\begin{proof}
Since $0 \in \inte(A -B)$, \cref{f:cvg}\ref{f:cvg_AnB} implies $x_n \to x \in A\cap B$.
Then $P_Ax =x$, and \cref{p:intersect} completes the proof. 
\end{proof}

\begin{corollary}
\label{c:H-Bs}
Let $A$ be a hyperplane or a halfspace, and $B =\bigcap_{j \in J} B_j$ be a finite intersection of closed balls in $X$. 
Suppose that $A\cap \inte B \neq\varnothing$, and for all $x \in A\cap \bd B$, there exists a unique $j \in J$ such that $x \in \bd B_j$.
Then every DRA sequence $(x_n)_\nnn$ with respect to $(A, B)$ converges \emph{finitely} to a point in $A\cap B$.
\end{corollary}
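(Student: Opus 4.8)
The plan is to reduce \cref{c:H-Bs} to the single-ball case \cref{p:H-B} via \cref{c:int_intersect}, taking the index set for $A$ to be a singleton so that $A$ itself plays the role of each ``$A_i$''. First note that $A$ is closed convex (a hyperplane or a halfspace) and $B=\bigcap_{j\in J}B_j$ is closed convex, being a finite intersection of closed balls; we may assume $J\neq\varnothing$, since otherwise $B=X$, $T_{A,B}=P_A$, and the sequence is constant from the first step on. Picking $d\in A\cap\inte B$ gives $0=d-d\in A-\inte B\subseteq\inte(A-B)$, so $0\in\inte(A-B)$, which is exactly the standing hypothesis of \cref{c:int_intersect} and already forces, via \cref{f:cvg}\ref{f:cvg_AnB}, every DRA sequence $(x_n)_\nnn$ for $(A,B)$ to converge to some $x\in A\cap B$. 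It remains to upgrade this to finite convergence.

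The key step is to verify the remaining hypothesis of \cref{c:int_intersect}: for each $x\in A\cap B$ there is $j\in J$ such that $(B,B_j)$ is locally identical around $x$ (the condition on $(A,A)$ being automatic). If $x\in A\cap\inte B$, any $j$ works, because a small $\ball{x}{\ve}$ lies inside $B\subseteq B_j$, so $B\cap\ball{x}{\ve}=\ball{x}{\ve}=B_j\cap\ball{x}{\ve}$. If $x\in A\cap\bd B$, let $j$ be the \emph{unique} index with $x\in\bd B_j$; for every $k\in J\smallsetminus\{j\}$ we have $x\in B\subseteq B_k$ and $x\notin\bd B_k$, hence $x\in\inte B_k$ and $\ball{x}{\ve_k}\subseteq B_k$ for some $\ve_k\in\RPP$. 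With $\ve:=\min_{k\neq j}\ve_k$ (or $\ve$ arbitrary if $J=\{j\}$) we get $\ball{x}{\ve}\subseteq\bigcap_{k\neq j}B_k$, whence $B\cap\ball{x}{\ve}=B_j\cap\bigl(\bigcap_{k\neq j}B_k\bigr)\cap\ball{x}{\ve}=B_j\cap\ball{x}{\ve}$; together with $B\subseteq B_j$ this says $(B,B_j)$ is locally identical around $x$. This case, which is the only place the uniqueness assumption is used, is the crux of the argument and the step I expect to require the most care.

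\cref{c:int_intersect} then yields $j\in J$ and $n_0\in\NN$ with $x_{n+1}=T_{A,B_j}x_n$ for all $n\geq n_0$; since $A$ and the ball $B_j$ are convex, $T_{A,B_j}$ is single-valued by \cref{f:proj}\ref{f:proj_cvx}, so $(x_n)_{n\geq n_0}$ is a bona fide DRA sequence for $(A,B_j)$ started at $x_{n_0}$. Because $B\subseteq B_j$ we have $\inte B\subseteq\inte B_j$, hence $A\cap\inte B_j\supseteq A\cap\inte B\neq\varnothing$, so \cref{p:H-B} applies to $(A,B_j)$ and the tail $(x_n)_{n\geq n_0}$ converges in finitely many steps. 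Thus $(x_n)_\nnn$ is eventually constant, and its eventual value is the limit $x\in A\cap B$ furnished above, completing the proof. Everything apart from the local-identity verification in the second paragraph is just assembly of the cited results.
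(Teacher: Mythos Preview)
Your proof is correct and follows essentially the same route as the paper: verify $0\in\inte(A-B)$, show that for each $x\in A\cap B$ some $B_j$ is locally identical with $B$ around $x$ (splitting into the interior and boundary cases exactly as the paper does), invoke \cref{c:int_intersect} to pass to the single-ball operator $T_{A,B_j}$ on a tail, and finish with \cref{p:H-B}. Your write-up simply fills in more detail than the paper does in the boundary case and in identifying the limit as the point of $A\cap B$ furnished by \cref{f:cvg}\ref{f:cvg_AnB}.
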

\begin{proof}
From $A\cap \inte B \neq\varnothing$, we immediately have $0 \in \inte(A -B)$.
Let $x \in A\cap B$. If $x \in \inte B$, then $(\forall j \in J)$ $x \in \inte B_j$, and so $B$ and $B_j$ are locally identical around $x$,
following \cref{l:id}\ref{l:id_int}.
If $x \in \bd B$, then by assumption, there exists a unique $j \in J$ such that $x \in \bd B_j$, which implies that $B$ and $B_j$ are locally identical around $x$. Now using \cref{c:int_intersect},
\begin{equation}
(\exists j \in J)(\exists n_0 \in \NN)(\forall n \geq n_0)\quad x_{n+1} =T_{A,B_j}x_n.
\end{equation}   
Since $B\subseteq B_j$, we also have $A\cap \inte B_j \neq\varnothing$, and so $(x_n)_\nnn$ converges finitely due to \cref{p:H-B}. 
\end{proof}

\begin{figure}[!ht]
\centering
\includegraphics[width=0.9\columnwidth]{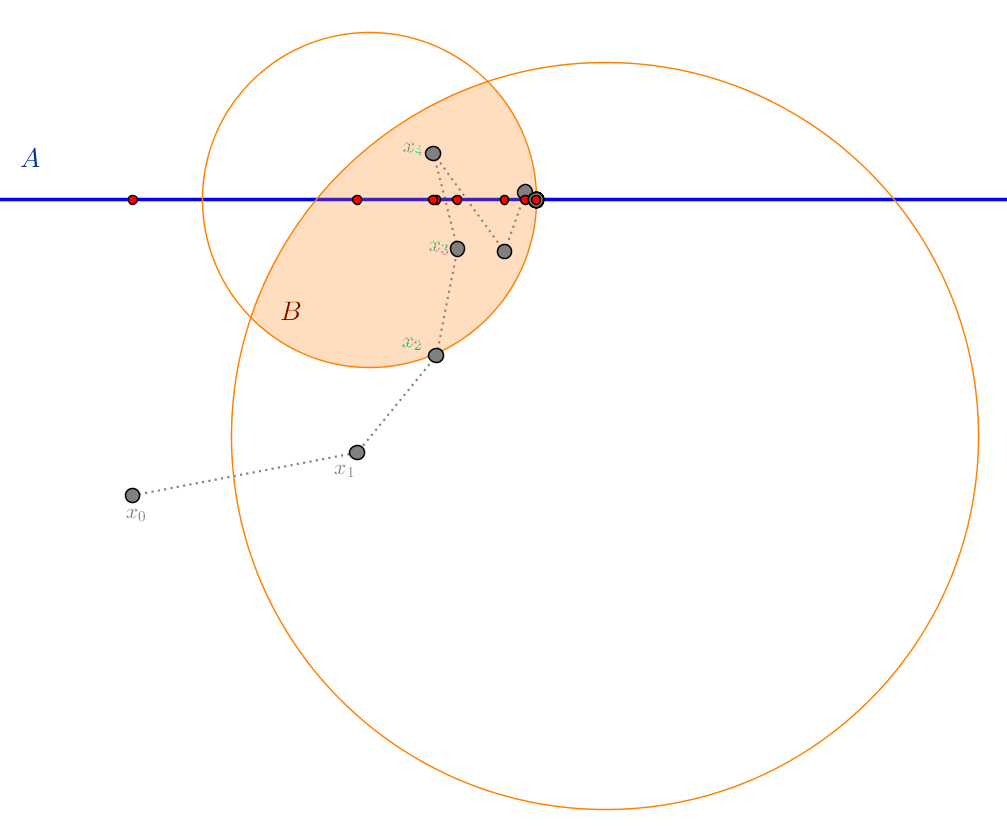}
\caption{A \texttt{GeoGebra} snapshot that illustrates \cref{c:H-Bs}.}\label{fg:H-Bs}
\end{figure}

\begin{corollary}
\label{c:P-B}
Let $A$ be a closed convex set, and $B$ be a closed ball in $\RR^2$ such that $A\cap \inte B \neq\varnothing$. 
Suppose that $A$ is locally identical with some polyhedral set around $A\cap \bd B$, 
and that no vertex of $A$ lies in $\bd B$.
Then every DRA sequence $(x_n)_\nnn$ with respect to $(A, B)$ converges \emph{finitely} to a point in $A\cap B$.
\end{corollary}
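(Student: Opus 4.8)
The plan is to localize around the limit of the DRA sequence and reduce to the halfspace/ball situation already settled in \cref{p:H-B}, much as in the proof of \cref{c:H-Bs}. Since $A\cap\inte B\neq\varnothing$ we have $0\in\inte(A-B)$, so \cref{f:cvg}\ref{f:cvg_AnB} already tells us that any DRA sequence $(x_n)_\nnn$ with respect to $(A,B)$ converges to some $x\in A\cap B$, with the convergence finite as soon as $x\in A\cap\inte B$. So the only case that needs work is $x\in A\cap\bd B$; note that then $P_Ax=x$, because $x\in A$ and $A$ is convex.

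Next I would extract from the hypotheses a halfspace $A'$ that coincides with $A$ near $x$. Because $x\in A\cap\bd B$ and no vertex of $A$ lies on $\bd B$, the point $x$ is not a vertex of $A$; combined with the assumption that $A$ is locally identical around $x$ with some polyhedral set, this forces $A$ to be locally identical around $x$ with either a closed halfplane --- namely when $x\in\bd A$, in which case $x$ lies in the relative interior of one edge --- or with the whole plane $\RR^2$, namely when $x\in\inte A$. In the first case I take $A'$ to be the supporting halfplane of $A$ along that edge; then $A\subseteq A'$, so $A'\cap\inte B\supseteq A\cap\inte B\neq\varnothing$, and $(A,A')$ is locally identical around $x$. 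In the second case I take $A'$ to be any closed halfplane with $x$ in its interior; then $(A,A')$ is locally identical around $x$ (both agree with a small ball centered at $x$), and $A'\cap\inte B\neq\varnothing$ since every neighborhood of $x$ meets $\inte B$. In either case $A'$ is a halfspace with $A'\cap\inte B\neq\varnothing$, and both $(A,A')$ and $(B,B)$ are locally identical around $x=P_Ax$.

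Finally I would invoke \cref{l:adjust} with $B':=B$: it yields $n_0\in\NN$ with $x_{n+1}=T_{A',B}x_n$ for all $n\geq n_0$, so the tail of $(x_n)_\nnn$ is exactly the DRA orbit of the pair $(A',B)$ started at $x_{n_0}$. Since $A'$ is a halfspace, $B$ a closed ball, and $A'\cap\inte B\neq\varnothing$, \cref{p:H-B} says that orbit converges finitely; hence $(x_n)_\nnn$ is eventually constant, necessarily equal to its already-identified limit $x\in A\cap B$, which proves the corollary.

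I expect the only genuinely delicate step to be the geometric extraction of $A'$: one has to be sure that ``$x$ not a vertex'' together with ``$A$ locally polyhedral'' really forces $A$ to coincide with a halfplane (or with $\RR^2$) on a \emph{full} neighborhood of $x$ --- in particular that the local polyhedral model has no hidden vertex at $x$ --- and, in the interior case, that the chosen halfplane still meets $\inte B$, which is clear once a small ball around $x$ is placed inside $\inte A'$. Once $A'$ is in hand, the rest is a direct appeal to \cref{f:cvg}, \cref{l:adjust}, and \cref{p:H-B}.
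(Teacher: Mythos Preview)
Your proof is correct and follows essentially the same approach as the paper: use $A\cap\inte B\neq\varnothing$ and \cref{f:cvg}\ref{f:cvg_AnB} to reduce to the case $x\in A\cap\bd B$, then exploit the local polyhedral structure and the ``no vertex on $\bd B$'' hypothesis to replace $A$ locally by a halfplane, and finish with \cref{p:H-B}. The only difference is cosmetic: the paper first invokes \cref{t:finite}\ref{t:finite_AbdryB}\&\ref{t:finite_AintB} to reduce to $A$ genuinely polyhedral, writes $A=\bigcap_i A_i$, and applies \cref{l:expand} (using $A\subseteq A_i$), whereas you stay with the original $A$ and invoke \cref{l:adjust} directly---your route is slightly more direct but otherwise the same argument.
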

\begin{proof}
By \cref{t:finite}\ref{t:finite_AbdryB}\&\ref{t:finite_AintB}, 
it is sufficient to consider the case where $A$ is a polyhedral set in $\RR^2$ satisfying $A\cap \inte B \neq\varnothing$.
Then $0 \in \inte(A -B)$, and using \cref{f:cvg}\ref{f:cvg_AnB}, 
$x_n \to x \in A\cap B$, and this convergence is finite if $x \in A\cap \inte B$.
It thus suffices to consider the case where $x \in A\cap \bd B$.
We can write $A =\bigcap_{i=1}^m A_i$, where each $A_i$ is a halfplane in $\RR^2$.
Since all vertices of $A$ are not in $\bd B$, we deduce that $x$ is not a vertex of $A$.
Hence, $A$ and $A_i$ are locally identical around $x$ for some $i$.
Now using \cref{l:expand},
\begin{equation}
(\exists n_0 \in \NN)(\forall n \geq n_0)\quad x_{n+1} =T_{A_i,B}x_n.
\end{equation}   
Moreover, $A_i\cap \inte B \neq\varnothing$, 
and by \cref{p:H-B}, $(x_n)_\nnn$ converges finitely. 
\end{proof}

\section{Shrinking sets}

\label{s:shrink}

In this section we focus on cases where we use information of the
DRA for $(A,B)$ to understand the DRA for $(A',B')$ where
$A'\subseteq A$ and $B'\subseteq B$.

\begin{lemma}[Shrinking sets]
\label{l:restrict}
Let $A$ be a closed convex subset and $B$ be a closed (not necessarily convex) subset of $X$ such that $A\cap B \neq\varnothing$,
and let $x_0$ be in $X$.
Suppose that the DRA sequence $(x_n)_\nnn$ with respect to $(A, B)$, with starting point $x_0$, converges to $x \in X$.
Suppose further that there exist two closed sets $A'$ and $B'$ in $X$ such that
$A'\subseteq A$, $B'\subseteq B$, 
and that both $(A', A)$ and $(B', B)$ are locally identical around $c :=P_Ax \in A'\cap B'$.  
Then 
\begin{equation}
(\exists n_0 \in \NN)(\forall n \geq n_0)\quad x_{n+1} \in T_{A',B'}x_n.
\end{equation}
\end{lemma}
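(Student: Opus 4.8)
The plan is to mirror the argument of \cref{l:expand}, but now exploiting the reverse inclusions $A'\subseteq A$ and $B'\subseteq B$ together with the one-sided local-identity statements in \cref{l:proj}. Since $(x_n)_\nnn$ converges to $x$ and $A$ is convex, $P_Ax$ is the single point $c$, and $c\in A'\cap B'$ by hypothesis. First I would record that, by the local identity of $(A',A)$ around $c$ and of $(B',B)$ around $c$, there exists $\varepsilon\in\RPP$ with
\begin{equation}
A'\cap \ball{c}{\varepsilon}=A\cap \ball{c}{\varepsilon}\quad\text{and}\quad B'\cap \ball{c}{\varepsilon}=B\cap \ball{c}{\varepsilon}.
\end{equation}
Then choose $n_0\in\NN$ with $\|x_n-x\|<\varepsilon$ for all $n\ge n_0$, and fix $n\ge n_0$.

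The core of the argument is to show $P_{A'}x_n\supseteq$ (or contains a point equal to) $P_Ax_n$ and similarly for $B'$, so that one DRA step for $(A,B)$ is also a legitimate DRA step for $(A',B')$. Since $P_A$ is (firmly) nonexpansive (\cref{f:proj}\ref{f:proj_firm}) and $P_Ax=c$, we get $\|P_Ax_n-c\|\le\|x_n-x\|<\varepsilon$, so $P_Ax_n\in A\cap\ball{c}{\varepsilon}=A'\cap\ball{c}{\varepsilon}$. Now apply \cref{l:proj}\ref{l:proj_subs} with the pair $(A',A)$ (playing the roles of "$A$" and "$B$" there, since $A'\subseteq A$ and they are locally identical around $c$): this yields $P_Ax_n\cap\ball{c}{\varepsilon}\subseteq P_{A'}x_n$, hence $P_Ax_n\in P_{A'}x_n$ and therefore $R_Ax_n=R_{A'}x_n$ — here $P_Ax_n$ is a singleton since $A$ is convex, so in fact $P_{A'}x_n$ contains this point and $R_{A'}x_n$ may be taken equal to $R_Ax_n$. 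Next, since $R_{A'}x_n=R_Ax_n$ and $R_A$ is nonexpansive (\cref{f:proj}\ref{f:proj_nonex}), $\|R_Ax_n-R_Ax\|\le\|x_n-x\|<\varepsilon$; because $x\in\Fix T_{A,B}$ (it is the limit of a DRA sequence, and $T_{A,B}$ is not single-valued in general, but $c=P_Ax\in P_BR_Ax$ so $x\in\Fix T_{A,B}$ by \eqref{e:xFixT}), we have $P_BR_Ax\ni c$, and — using firm nonexpansiveness of $P_B$ when $B$ is convex, or more carefully the estimate on any selection — a suitable selection of $P_BR_Ax_n$ lies in $\ball{c}{\varepsilon}$. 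Hence a point $b\in P_BR_{A'}x_n\cap\ball{c}{\varepsilon}\subseteq B\cap\ball{c}{\varepsilon}=B'\cap\ball{c}{\varepsilon}$, and again by \cref{l:proj}\ref{l:proj_subs} applied to $(B',B)$, this point lies in $P_{B'}R_{A'}x_n$. Consequently $x_{n+1}=x_n-P_Ax_n+P_BR_Ax_n=x_n-P_{A'}x_n+P_{B'}R_{A'}x_n\in T_{A',B'}x_n$, which is the claim.

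The main obstacle I anticipate is the handling of $B$ when $B$ (and $B'$) need not be convex: $P_B$ is then set-valued and possibly discontinuous, so I cannot simply invoke nonexpansiveness to conclude that the relevant selection of $P_BR_Ax_n$ stays near $c$. The fix is to track the specific selection used in the DRA sequence: by definition $x_{n+1}\in T_{A,B}x_n$ means $x_{n+1}=x_n-a_n+P_B(2a_n-x_n)$ for some $a_n\in P_Ax_n$ and some choice $b_n\in P_B(R_Ax_n)$ with $b_n=x_{n+1}-x_n+a_n$; since $x_{n+1}\to x$ and $a_n=P_Ax_n\to c$ (the latter by nonexpansiveness, as $A$ is convex), we get $b_n\to c$, so $b_n\in\ball{c}{\varepsilon}$ for $n$ large, and the earlier reasoning applies to this $b_n$. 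A second, minor, point to check is that \cref{l:proj}\ref{l:proj_subs} is indeed applicable: it requires the \emph{smaller} set to be a subset of the larger and the two to be locally identical around a point of their (common) intersection lying in the interior of the identity ball; shrinking $\varepsilon$ if necessary so that $c\in\inte\ball{c}{\varepsilon}$ trivially and the stated ball-identity holds gives exactly the hypothesis of that lemma with $(A',A)$ and $(B',B)$.
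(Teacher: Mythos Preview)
Your proposal is correct and follows essentially the same route as the paper's proof. Two minor differences worth noting: (a) for the $A$-part the paper invokes \cref{l:proj}\ref{l:proj_sub.a} (using the convexity of $A$) to obtain the \emph{equality} $P_{A'}x_n=P_Ax_n$, which makes $R_{A'}x_n=R_Ax_n$ unambiguous, whereas you use \cref{l:proj}\ref{l:proj_subs} to get only $P_Ax_n\in P_{A'}x_n$ --- this weaker inclusion still suffices for the conclusion $x_{n+1}\in T_{A',B'}x_n$, as you observe; (b) for the $B$-part the paper goes directly to the selection $b_n:=x_{n+1}-x_n+P_Ax_n$ and bounds $\|b_n-c\|\le\|x_{n+1}-x\|+\|x_n-x\|+\|P_Ax_n-c\|$ via an explicit $\varepsilon/3$ choice, rather than first attempting a nonexpansiveness argument for $P_B$; your ``fix'' paragraph lands on exactly this selection-tracking idea. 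One caution: your aside that $x\in\Fix T_{A,B}$ is not part of the hypotheses and is not obviously true when $B$ is nonconvex (closedness of $\gra T_{A,B}$ would need justification), but since your final argument does not rely on it, this does not affect correctness.
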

\begin{proof}
By assumption, there exists $\varepsilon \in \RPP$ such that 
\begin{equation}
A\cap \ball{c}{\varepsilon} =A'\cap \ball{c}{\varepsilon}
\quad\text{and}\quad
B\cap \ball{c}{\varepsilon} =B'\cap \ball{c}{\varepsilon}.
\end{equation}
Then, there is $n_0 \in \NN$ such that
\begin{equation}
(\forall n\geq n_0)\quad \|x_n -x\| <\varepsilon/3.
\end{equation}
Let $n \geq n_0$. Since $P_A$ is (firmly) nonexpansive (\cref{f:proj}\ref{f:proj_firm}),
\begin{equation}
\|P_Ax_n -c\| =\|P_Ax_n -P_Ax\| \leq \|x_n -x\| <\varepsilon/3,
\end{equation}
which implies $P_Ax_n \in \ball{c}{\varepsilon}$.
Using the convexity of $A$ and applying \cref{l:proj}\ref{l:proj_sub.a} for $A' \subseteq A$, we have 
$P_{A'}x_n =P_Ax_n$, and also $R_{A'}x_n =R_Ax_n$.
Noting that $x_{n+1} -x_n +P_Ax_n \in P_BR_Ax_n$ and
\begin{equation}
\|x_{n+1} -x_n +P_Ax_n -c\| \leq \|x_{n+1} -x\| +\|x_n -x\| +\|P_Ax_n -c\| <\varepsilon,
\end{equation}
we get $x_{n+1} -x_n +P_Ax_n \in P_BR_Ax_n\cap \ball{c}{\varepsilon}$, 
and then applying \cref{l:proj}\ref{l:proj_subs} for $B' \subseteq B$ yields
$x_{n+1} -x_n +P_Ax_n \in P_{B'}R_Ax_n =P_{B'}R_{A'}x_n$. 
Hence, $x_{n+1} \in x_n -P_{A'}x_n +P_{B'}R_{A'}x_n =T_{A',B'}x_n$.
\end{proof}

\begin{remark}
If $A'$ and $B'$ in \cref{l:restrict} are convex, 
then $T_{A',B'}$ is single-valued, and we have the conclusion that
\begin{equation}
\label{e:restrict}
(\exists n_0 \in \NN)(\forall n \geq n_0)\quad x_{n+1} =T_{A',B'}x_n,
\end{equation}
i.e., $(\exists n_0 \in \NN)(\forall\nnn)\quad T_{A,B}^nx_{n_0} =T_{A',B'}^nx_{n_0}$.
\end{remark}

\begin{corollary}
\label{c:restrict_j}
Let $A$ be a closed convex subset and $B =\bigcup_{j \in J} B_j$ be a finite union of disjoint closed convex sets in $X$ 
such that $A\cap B \neq\varnothing$, and let $x_0$ be in $X$.
Suppose that the DRA sequence $(x_n)_\nnn$ with respect to $(A, B)$, with starting point $x_0$, 
is bounded and \emph{asymptotically regular}, i.e., $x_n -x_{n+1} \to 0$.
Then $(x_n)_\nnn$ converges to a point $x \in \Fix T_{A,B}$, 
and there exists $j \in J$ such that 
\begin{equation}
P_Ax \in A\cap B_j \quad\text{and}\quad (\exists n_0 \in \NN)(\forall n \geq n_0)\quad x_{n+1} =T_{A,B_j}x_n.
\end{equation}
\end{corollary}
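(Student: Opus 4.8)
The plan is to prove first that $(x_n)_\nnn$ converges — which is the real content here, since $B$ is a union of convex sets and hence nonconvex, so \cref{f:cvg} does not apply to the pair $(A,B)$ — and then to read off the remaining assertions almost for free. The key observation is that the tail of the sequence is eventually a \emph{convex} DRA sequence for $(A,B_j)$, where $B_j$ is the component of $B$ containing the relevant limit point; once this is established, \cref{f:cvg}\ref{f:cvg_FixT} delivers convergence and the other conclusions.

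First I would produce a cluster point that is a fixed point. Since $(x_n)_\nnn$ is bounded, pick a subsequence $x_{n_k}\to x$; by asymptotic regularity $x_{n_k+1}\to x$ as well. As $A$ is convex, $P_A$ and $R_A$ are continuous (\cref{f:proj}), and $P_B$ has closed graph because it is the projector onto a closed set; hence $T_{A,B}$ has closed graph, and passing to the limit in $x_{n_k+1}\in T_{A,B}x_{n_k}$ yields $x\in\Fix T_{A,B}$. Put $c:=P_Ax$; then \eqref{e:xFixT} gives $c\in A\cap B$ and $c\in P_BR_Ax$, so $c\in B_j$ for a unique $j\in J$. Since $J$ is finite and each $B_{j'}$ with $j'\neq j$ is closed and misses $c$, there is $\varepsilon\in\RPP$ with $\ball{c}{\varepsilon}\cap B_{j'}=\varnothing$ for all $j'\neq j$; thus $B\cap\ball{c}{\varepsilon}=B_j\cap\ball{c}{\varepsilon}$, i.e., $(B_j,B)$ is locally identical around $c$, and \cref{l:proj}\ref{l:proj_subs} (applied to the inclusion $B_j\subseteq B$) gives $P_By\cap\ball{c}{\varepsilon}\subseteq P_{B_j}y$ for every $y\in X$, the right-hand side being a singleton. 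Taking $y=R_Ax$ and using $c\in P_BR_Ax$ shows $P_{B_j}R_Ax=c=P_Ax$, so $x\in\Fix T_{A,B_j}$ by \eqref{e:xFixT}, while $T_{A,B_j}=\tfrac{1}{2}(\Id+R_{B_j}R_A)$ is nonexpansive.

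Next I would run a trapping argument. Writing each DRA step as $x_{n+1}=x_n-P_Ax_n+p_n$ with $p_n\in P_BR_Ax_n$, we get $\|p_n-c\|\le\|x_{n+1}-x_n\|+\|x_n-x\|$. Choose $k$ so large that $\|x_{n_k}-x\|<\varepsilon/2$ and $\sup_{n\ge n_k}\|x_{n+1}-x_n\|<\varepsilon/2$. Then I claim, by induction on $n\ge n_k$, that $x_{n+1}=T_{A,B_j}x_n$ and $\|x_{n+1}-x\|<\varepsilon/2$: indeed, if $\|x_n-x\|<\varepsilon/2$ then $\|p_n-c\|<\varepsilon$, so $p_n\in P_BR_Ax_n\cap\ball{c}{\varepsilon}\subseteq P_{B_j}R_Ax_n$, forcing $p_n=P_{B_j}R_Ax_n$ and hence $x_{n+1}=x_n-P_Ax_n+P_{B_j}R_Ax_n=T_{A,B_j}x_n$; then, since $x\in\Fix T_{A,B_j}$, $\|x_{n+1}-x\|=\|T_{A,B_j}x_n-T_{A,B_j}x\|\le\|x_n-x\|<\varepsilon/2$. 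Therefore $(x_n)_{n\ge n_k}$ is a DRA sequence for the closed convex pair $(A,B_j)$ with $c\in A\cap B_j$, so it converges by \cref{f:cvg}\ref{f:cvg_FixT}; consequently $(x_n)_\nnn$ converges, necessarily to its cluster point $x\in\Fix T_{A,B}$, with $P_Ax=c\in A\cap B_j$ and $x_{n+1}=T_{A,B_j}x_n$ for all $n\ge n_k=:n_0$.

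The main obstacle is precisely this last step. One cannot simply invoke \cref{l:restrict} to get the eventual identity $x_{n+1}=T_{A,B_j}x_n$, because \cref{l:restrict} already presupposes that $(x_n)_\nnn$ converges, whereas here convergence is part of the conclusion; a self-contained argument is required. The delicate point within it is that $P_B$ may be genuinely set-valued, so the algorithm could in principle select a projection far from $c$; ruling this out is exactly what the separation estimate $P_By\cap\ball{c}{\varepsilon}\subseteq P_{B_j}y$ (from \cref{l:proj}\ref{l:proj_subs}, which is where finiteness of $J$ is used) together with asymptotic regularity accomplishes, by keeping the iterates inside $\ball{c}{\varepsilon}$ and turning every sufficiently late step into a genuine $T_{A,B_j}$-step.
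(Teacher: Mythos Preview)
Your argument is correct. The route, however, differs from the paper's in one essential point: the paper does \emph{not} prove convergence by hand. It invokes \cite[Theorem~2]{BN14}, which asserts precisely that a bounded, asymptotically regular DRA sequence for a closed convex set and a finite union of closed convex sets converges to a fixed point. Once convergence of $(x_n)_\nnn$ to some $x\in\Fix T_{A,B}$ is secured in this way, the paper notes $P_Ax\in A\cap B_j$ for a unique $j$, observes that $B$ and $B_j$ are locally identical around $P_Ax$ (exactly as you do), and then applies \cref{l:restrict} directly---the hypothesis ``$(x_n)_\nnn$ converges'' that you flagged as the obstacle is already available.

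Your approach avoids the external reference \cite{BN14} entirely: you extract a cluster point $x$, verify $x\in\Fix T_{A,B_j}$, and run a trapping induction (using nonexpansiveness of $T_{A,B_j}$ and the separation $P_By\cap\ball{c}{\varepsilon}\subseteq P_{B_j}y$ from \cref{l:proj}\ref{l:proj_subs}) to show the tail is a genuine DRA sequence for the convex pair $(A,B_j)$, whence convergence follows from \cref{f:cvg}\ref{f:cvg_FixT}. This is self-contained and in effect reproves the relevant special case of \cite[Theorem~2]{BN14}; it also makes transparent exactly where finiteness of $J$, disjointness of the $B_j$, and asymptotic regularity enter. The paper's proof is shorter and more modular; yours is longer but independent of \cite{BN14}. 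Your remark that ``one cannot simply invoke \cref{l:restrict}'' is correct only if one lacks an independent convergence result---which the paper has and you supply yourself.
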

\begin{proof}
According to \cite[Theorem~2]{BN14}, $(x_n)_\nnn$ converges to a point $x \in \Fix T_{A,B}$. 
Since $A$ is convex, $P_Ax$ is a singleton, and by \eqref{e:xFixT'}, $P_Ax \in A\cap B$.
Then there exists $j \in J$ such that $P_Ax \in A\cap B_j$. 
By assumption, there exists $\varepsilon \in \RPP$ such that 
$(\forall k \in J\smallsetminus \{j\})$ $B_k \cap \ball{P_Ax}{\varepsilon} =\varnothing$.
This implies $B \cap \ball{P_Ax}{\varepsilon} =B_j \cap \ball{P_Ax}{\varepsilon}$, 
so $B$ and $B_j$ are locally identical around $P_Ax$. 
Now apply \cref{l:restrict}.
\end{proof}

\begin{corollary}
Let $A$ be a hyperplane or a halfspace, and $B =\bigcup_{j \in J} B_j$ be a finite union of disjoint closed balls in $X$ 
such that $A\cap B \neq\varnothing$, and $A\cap \inte B_j \neq\varnothing$ whenever $A\cap B_j \neq\varnothing$.
Let $x_0$ be in $X$.
Suppose that the DRA sequence $(x_n)_\nnn$ with respect to $(A, B)$, with starting point $x_0$, is bounded and \emph{asymptotically regular}, i.e., $x_n -x_{n+1} \to 0$.
Then $(x_n)_\nnn$ converges \emph{finitely} to a point $x \in A\cap B$.
\end{corollary}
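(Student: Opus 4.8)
The plan is to deduce this directly from \cref{c:restrict_j} and \cref{p:H-B}. First I would observe that each ball $B_j$ is closed and convex, so $B =\bigcup_{j\in J} B_j$ is a finite union of pairwise disjoint closed convex sets, while $A$ (a hyperplane or halfspace) is closed and convex. Since the DRA sequence $(x_n)_\nnn$ with respect to $(A,B)$ is bounded and asymptotically regular, \cref{c:restrict_j} applies and yields a point $x\in\Fix T_{A,B}$ with $x_n\to x$, together with an index $j\in J$ and $n_0\in\NN$ such that $P_Ax\in A\cap B_j$ and $x_{n+1}=T_{A,B_j}x_n$ for all $n\ge n_0$.

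Next I would invoke the Slater-type hypothesis: because $P_Ax\in A\cap B_j$, we have $A\cap B_j\neq\varnothing$, hence by assumption $A\cap\inte B_j\neq\varnothing$. Now $A$ is a hyperplane or halfspace and $B_j$ is a closed ball with $A\cap\inte B_j\neq\varnothing$, so \cref{p:H-B} guarantees that every DRA sequence with respect to $(A,B_j)$ converges in finitely many steps to a point in $A\cap B_j$.

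Finally I would apply this to the shifted sequence $(y_k)_{k\in\NN}$ defined by $y_k :=x_{n_0+k}$, which by the recursion established above is a DRA sequence with respect to $(A,B_j)$ with starting point $x_{n_0}$. \cref{p:H-B} shows $(y_k)$ is eventually constant, hence $(x_n)_\nnn$ is eventually constant, i.e., it converges finitely; its limit coincides with $x$ and lies in $A\cap B_j\subseteq A\cap B$, which completes the argument.

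I do not anticipate a genuine obstacle here, since the statement is essentially the concatenation of the two preceding results. The only points that require care are verifying that the hypotheses of \cref{c:restrict_j} and \cref{p:H-B} are met — in particular that $A\cap B_j\neq\varnothing$ (which holds because $P_Ax\in A\cap B_j$) is exactly what activates the Slater condition $A\cap\inte B_j\neq\varnothing$ — and that, past the index $n_0$, a tail of a DRA sequence for $(A,B)$ is genuinely a DRA sequence for $(A,B_j)$ so that \cref{p:H-B} is legitimately applicable to it.
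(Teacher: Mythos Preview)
Your proposal is correct and follows essentially the same approach as the paper: apply \cref{c:restrict_j} to reduce to a single ball $B_j$, use $P_Ax\in A\cap B_j$ to trigger the hypothesis $A\cap\inte B_j\neq\varnothing$, and then invoke \cref{p:H-B} on the tail sequence to obtain finite convergence with limit in $A\cap B_j\subseteq A\cap B$. Your write-up is a bit more explicit about why the tail is a genuine DRA sequence for $(A,B_j)$, but the structure is identical.
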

\begin{proof}
Using \cref{c:restrict_j}, $x_n \to x \in \Fix T_{A,B}$, 
and there is $j \in J$ such that  
\begin{equation}
P_Ax \in A\cap B_j \quad\text{and}\quad (\exists n_0 \in \NN)(\forall n \geq n_0)\quad x_{n+1} =T_{A,B_j}x_n.
\end{equation}   
Then $A\cap B_j \neq\varnothing$, and by assumption, $A\cap \inte B_j \neq\varnothing$.
Now by \cref{p:H-B}, the convergence of $(x_n)_\nnn$ to $x$ is finite, and $x \in A\cap B_j \subseteq A\cap B$. 
\end{proof}

\section{When one set is finite}
\label{s:finiteset}

If the $B_j$ in \cref{c:restrict_j} are singletons and $A$ is either an affine subspace or a halfspace, then
it is possible to obtain stronger conclusions.
\begin{theorem}
Let $A$ be an affine subspace or a halfspace, and $B$ be a finite subset of $X$ such that $A\cap B \neq\varnothing$,
and let $x_0$ be in $X$. 
Suppose that the DRA sequence $(x_n)_\nnn$ with respect to $(A, B)$, with starting point $x_0$,  
is asymptotically regular, i.e., $x_n -x_{n+1} \to 0$.
Then $(x_n)_\nnn$ converges \emph{finitely} to a point $x \in \Fix T_{A,B}$ with $P_Ax \in A\cap B$.
\end{theorem}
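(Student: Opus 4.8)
The plan is to handle the two cases ($A$ an affine subspace, $A$ a halfspace) by a common first reduction and then a short case-specific dynamics argument, exploiting that in both cases $P_A$ is single-valued and that the finite set $B$ is the disjoint union of the singletons $\{b\}$, $b\in B$.

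First I would write the DRA recursion as $x_{n+1}=x_n-P_Ax_n+b_n$ for a suitable choice $b_n\in P_BR_Ax_n\subseteq B$. Since $P_Ax_n\in A$ for every $n$, asymptotic regularity gives $d_A(b_n)\le\|b_n-P_Ax_n\|=\|x_{n+1}-x_n\|\to0$; and since $B$ is finite, $\delta:=\min\{d_A(b):b\in B\setminus A\}>0$ (the case $B\subseteq A$ being immediate). Hence there is $n_0$ with $b_n\in B\cap A=A\cap B$ for all $n\ge n_0$: the ``$B$-iterate'' is eventually forced into the feasible set.

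I would next record boundedness of $(x_n)$. When $A$ is affine, for $n\ge n_0$ the affineness and idempotency of $P_A$ give $P_Ax_{n+1}=P_A(x_n-P_Ax_n+b_n)=b_n$, so $x_{n+1}-P_Ax_{n+1}=x_n-P_Ax_n$ is eventually constant, while $(P_Ax_n)_\nnn$ is bounded because $\|P_Ax_n-b_n\|\to0$ and $B$ is finite. When $A$ is a halfspace, say $A=\{y:\scal{y}{u}\le\eta\}$, one has $x_n-P_Ax_n=\gamma_nu$ with $\gamma_n\ge0$ and $\gamma_n\|u\|=d_A(x_n)$; for $n\ge n_0$, since $\scal{b_n}{u}\le\eta$, a one-line estimate yields $\gamma_{n+1}\|u\|=d_A(x_{n+1})\le\gamma_n\|u\|^2/\|u\|=\gamma_n\|u\|$, so $(\gamma_n)$ is nonincreasing, hence bounded, hence $(x_n)$ is bounded. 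With boundedness and asymptotic regularity in hand, \cref{c:restrict_j} applied with the $B_j$ equal to the singletons of $B$ produces a limit $x\in\Fix T_{A,B}$ with $P_Ax\in A\cap\{c\}$ for some $c\in A\cap B$, together with an index $n_1$ such that $x_{n+1}=T_{A,\{c\}}x_n$ for all $n\ge n_1$. (Alternatively one can bypass \cref{c:restrict_j}: from $x_{n+2}-x_{n+1}=(b_{n+1}-b_n)+(\gamma_{n+1}-\gamma_n)u\to0$ together with $\gamma_{n+1}-\gamma_n\to0$ one gets $b_{n+1}-b_n\to0$, so $b_n$ is eventually a constant $c\in A\cap B$.)

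It then remains to show the orbit of $T_{A,\{c\}}x=x-P_Ax+c$ terminates. This map sends all of $A$ to the single point $c$, so stabilization is immediate once an iterate enters $A$. In the affine case $P_Ax_{n+1}=c$ already forces $x_{n+2}=x_{n+1}-c+c=x_{n+1}$, so the sequence is constant from $n_1+1$ on. In the halfspace case I would split according to whether $c\in\inte A$ or $c\in\bd A$: if $c\in\bd A$, then $x_{n+1}=c+\gamma_nu$ gives $\scal{x_{n+1}}{u}-\eta=\gamma_n\|u\|^2\ge0$ and hence $\gamma_{n+1}=\gamma_n$, so $(x_n)$ is constant from $n_1+1$ on; if $c\in\inte A$, writing $\beta:=\eta-\scal{c}{u}>0$, one shows $\gamma_n$ must eventually drop to $0$ — otherwise $\gamma_{n+1}=\gamma_n-\beta/\|u\|^2$ for all large $n$, forcing $\gamma_n\to-\infty$, a contradiction — and once $\gamma_n=0$ we have $x_{n+1}=c\in A$ and the sequence is constant thereafter. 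In every case $(x_n)$ is eventually equal to some $x$ with $P_Ax=c\in A\cap B$, and $x\in\Fix T_{A,B}$ by \eqref{e:xFixT}. The main obstacle is precisely this last halfspace step: passing from the (easy) asymptotic monotone convergence of $\gamma_n$ to genuine \emph{finite} stabilization, which is where the estimate $\gamma_{n+1}\le\gamma_n$ and the $\inte A$ versus $\bd A$ dichotomy carry the argument; the affine case, once one notices $P_Ax_{n+1}=b_n$, is essentially immediate.
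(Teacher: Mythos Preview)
Your argument is correct, and the overall strategy is genuinely different from the paper's. Your key opening observation---that $d_A(b_n)\le\|b_n-P_Ax_n\|=\|x_{n+1}-x_n\|\to0$ forces $b_n\in A\cap B$ from some index on because $B$ is finite---does not appear in the paper and short-circuits much of the work. Once $b_n\in A$, the relation $P_Ax_{n+1}=b_n$ (affine case) or the monotonicity $\gamma_{n+1}\le\gamma_n$ (halfspace case) drops out in one line, and the reduction via \cref{c:restrict_j} to the dynamics of $T_{A,\{c\}}$ with $c\in A\cap B$ is clean. Your final analysis of $T_{A,\{c\}}$ is correct in both sub-cases; in particular, the $\inte A$/$\bd A$ dichotomy for the halfspace is exactly the right way to see finite termination of the scalar sequence $(\gamma_n)$.

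By contrast, the paper never isolates the fact that $b_n$ is eventually in $A$. In the affine case it works with the Moore--Penrose inverse formula $P_Ax=x-L^\dagger(Lx-v)$, computes $x_{n+2}-x_{n+1}=L^\dagger(Lb_n-v)+b_{n+1}-b_n$, and uses asymptotic regularity twice to force both summands to zero, whence $b_n$ is eventually constant and $L^\dagger(Lb-v)=0$. In the halfspace case it does a separate analysis, extracting the subsequence of indices $n_k$ with $x_{n_k}\notin A$, showing $(b_{n_k})$ stabilizes via an inequality coming from $b_{n+1}\in P_BR_Ax_{n+1}$, and then closing with a direct argument on $\scal{x_n}{u}$. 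What your approach buys is a unified first step and a conceptual reduction (to the single-point set $\{c\}$) that makes the endgame transparent; what the paper's approach buys is that it never needs boundedness of $(x_n)$ or the machinery of \cref{c:restrict_j}, trading those for more explicit, coordinate-level computations. Your parenthetical ``bypass'' of \cref{c:restrict_j} (using $x_{n+2}-x_{n+1}=(b_{n+1}-b_n)+(\gamma_{n+1}-\gamma_n)u$ in the halfspace case, and $x_{n+2}-x_{n+1}=b_{n+1}-b_n$ in the affine case) actually makes your proof entirely self-contained and still shorter than the paper's.
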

\begin{proof}
Observe that $P_A$ is single-valued as $A$ is convex. 
According to \eqref{e:xFixT'}, it suffices to show that $x_n \to x \in \Fix T_{A,B}$ finitely.
Set
\begin{equation}
\label{e:bn}
(\forall\nnn)\quad b_n :=x_{n+1} -x_n +P_Ax_n \in P_BR_Ax_n \subseteq B.
\end{equation}
Let us first consider the case when $A$ is an affine subspace. 
Then we can represent $A =\menge{x \in X}{Lx =v}$, 
where $L$ is a linear operator from $X$ to a real Hilbert space $Y$, and $v \in \ran L$.
Denoting by $L^\dagger$ the Moore--Penrose inverse of $L$,
\cref{ex:proj_affine} gives 
\begin{equation}
\label{e:PAxn}
(\forall\nnn)\quad P_Ax_n =x_n -L^\dagger(Lx_n -v),
\end{equation}
and so
\begin{equation}
(\forall\nnn)\quad x_{n+1} =x_n -P_Ax_n +b_n =L^\dagger(Lx_n -v) +b_n.
\end{equation}
Since $L^\dagger LL^\dagger =L^\dagger$ (see \cite[Chapter II, Section 2]{Gro77}), we get 
\begin{subequations}
\label{e:LLxn+}
\begin{align}
(\forall\nnn)\quad L^\dagger(Lx_{n+1} -v) &=L^\dagger L(L^\dagger(Lx_n -v) +b_n) -L^\dagger v \\ 
&=L^\dagger(Lx_n -v) +L^\dagger(Lb_n -v),
\end{align}
\end{subequations}
and then \eqref{e:PAxn} gives
\begin{equation}
(\forall\nnn)\quad P_Ax_{n+1} =x_{n+1} -L^\dagger(Lx_{n+1} -v) =-L^\dagger(Lb_n -v) +b_n.
\end{equation}
Now in turn,
\begin{equation}
\label{e:xn++}
(\forall\nnn)\quad x_{n+2} =x_{n+1} -P_Ax_{n+1} +b_{n+1} =x_{n+1} +L^\dagger(Lb_n -v) -b_n +b_{n+1}.
\end{equation}
Using the asymptotic regularity of $(x_n)_\nnn$, \eqref{e:xn++} and \eqref{e:LLxn+} yield
\begin{subequations}
\begin{align}
\label{e:LLbn}
L^\dagger(Lb_n -v) &=L^\dagger L(x_{n+1} -x_n) \to 0, \\
b_{n+1} -b_n &=x_{n+2} -x_{n+1} -L^\dagger(Lb_n -v) \to 0.
\end{align}
\end{subequations}
Since $(b_n)_\nnn $ lies in $B$ and $B$ is finite, there exists $n_0 \in \NN$ such that
$(\forall n \geq n_0)$ $b_{n+1} =b_n =b \in B$.
Then by \eqref{e:LLbn}, $L^\dagger(Lb -v) =0$, 
which together with \eqref{e:xn++} gives
\begin{equation}
(\forall n \geq n_0)\quad x_{n+2} =x_{n+1} +L^\dagger(Lb -v) =x_{n+1},
\end{equation}
and $(x_n)_\nnn$ thus converges finitely.

Now consider the case when $A$ is a halfspace. 
Without loss of generality, we assume that 
$A =\menge{x \in X}{\scal{x}{u} \leq 0}$, where $u \in X$ and $\|u\| =1$.
Using \cref{ex:proj_h}\ref{ex:proj_half}, 
we have
\begin{equation}
(\forall\nnn)\quad P_Ax_n =\begin{cases}
x_n &\text{~if~} x_n \in A, \\
x_n -\scal{x_n}{u}u &\text{~if~} x_n \notin A, 
\end{cases}
\end{equation}
and by \eqref{e:bn}, 
\begin{equation}
\label{e:xn+}
(\forall\nnn)\quad x_{n+1} =\begin{cases}
b_n &\text{~if~} x_n \in A, \\
\scal{x_n}{u}u +b_n &\text{~if~} x_n \notin A.
\end{cases} 
\end{equation}
If $(\exists\nnn)$ $x_n \in A$ and $b_n \in A$, 
then \eqref{e:xn+} gives $x_{n+1} =b_n \in A\cap B$, and we are done.
Assume that $(\forall\nnn)$ $x_n \notin A$ or $b_n \notin A$. 
By using \eqref{e:xn+}, $(\forall\nnn)$ $x_n \in A$ $\Rightarrow$ $x_{n+1} =b_n \notin A$.  
Thus, the set $\menge{\nnn}{x_n \notin A}$ is infinite, 
and denoted by $(n_k)_{k \in \NN}$ the enumeration of that set, we have
\begin{equation}
(\forall k \in \NN)\quad x_{n_k} \notin A, \text{~i.e.,~}  \scal{x_{n_k}}{u} >0, 
\quad\text{and}\quad n_{k+1} -n_{k} \in \{1, 2\}.
\end{equation}
Then $x_{n_{k+1}} -x_{n_k} =x_{n_k+1} -x_{n_k}$ or 
$x_{n_{k+1}} -x_{n_k} =(x_{n_k+2} -x_{n_k+1}) +(x_{n_k+1} -x_{n_k})$, 
and the asymptotic regularity of $(x_n)_\nnn$ implies the one of $(x_{n_k})_{k \in \NN}$ and also of $(x_{n_k+1})_{k \in \NN}$.
Since $x_{n_k} \notin A$, \eqref{e:xn+} gives
\begin{equation}
\label{e:xnk}
x_{n_k+1} =\scal{x_{n_k}}{u}u +b_{n_k},
\end{equation}
and so
\begin{equation}
b_{n_{k+1}} -b_{n_k} =(x_{n_{k+1}+1} -x_{n_k+1}) -\scal{x_{n_{k+1}} -x_{n_k}}{u}u \to 0.
\end{equation} 
But $(b_{n_k})_{k \in \NN}$ is in the finite set $B$, there exists $k_0 \in \NN$ such that 
\begin{equation}
\label{e:bnk0}
(\forall k \geq k_0)\quad b_{n_{k+1}} =b_{n_k} =: b \in B.
\end{equation}
On the other hand, \eqref{e:xnk} implies
\begin{equation}
\label{e:xnk,u}
(\forall k \in \NN)\quad \scal{x_{n_k+1}}{u} =\scal{x_{n_k}}{u} +\scal{b_{n_k}}{u},
\end{equation}
and then 
\begin{equation}
\scal{b_{n_k}}{u} =\scal{x_{n_k+1} -x_{n_k}}{u} \to 0,
\end{equation}
which yields $\scal{b}{u} =0$, and thus $b \in A\cap B$.
Let $k \geq k_0$. It follows from \eqref{e:bnk0} and \eqref{e:xnk,u} that 
\begin{equation}
\scal{x_{n_k+1}}{u} =\scal{x_{n_k}}{u} +\scal{b}{u} =\scal{x_{n_k}}{u}.
\end{equation}
Hence $x_{n_k+1} \notin A$ as $x_{n_k} \notin A$. 
We obtain $n_{k+1} =n_k +1$, and by combining with \eqref{e:xnk} and \eqref{e:bnk0},
\begin{equation}
x_{n_k+2} =\scal{x_{n_k+1}}{u}u +b =\scal{x_{n_k}}{u} +b =x_{n_k+1},
\end{equation}
which completes the proof.
\end{proof}

The following examples illustrate that without asymptotic regularity 
a DRA sequence with respect to $(A, B)$ may fail to converge.
\begin{example}
Suppose that $X =\RR^2$, $A =\RR\times \{0\}$ and $B =\{(0, -2), (1, 2), (-2, 0)\}$. 
Then $A\cap B \neq\varnothing$ but the DRA sequence with respect to $(A, B)$ with starting point $x_0 =(0, -1)$ 
does not converge since it cycles between two points $x_0 =(0, -1)$ and $x_1 =(1, 1)$.
\end{example}

\begin{example}
Suppose that $X =\RR^2$, that $A =\RR\times \RR_-$ is a halfspace, 
and that $B =\{(2, 5), (20, -20), (8, 7), (-20, 0)\}$ is a finite set. 
Then $A\cap B \neq\varnothing$ but when started at $x_0 =(2, 17)$, 
the DRA cycles between four points $x_0 =(2, 17)$, $x_1 =(20, -3)$, $x_2 =(8, 7)$ and $x_3 =(2, 12)$, 
as shown in \cref{fg:H-Ps} which was created by \texttt{GeoGebra} \cite{GGB}.
\end{example}

\begin{figure}[!ht]
\centering
\includegraphics[width=0.9\columnwidth]{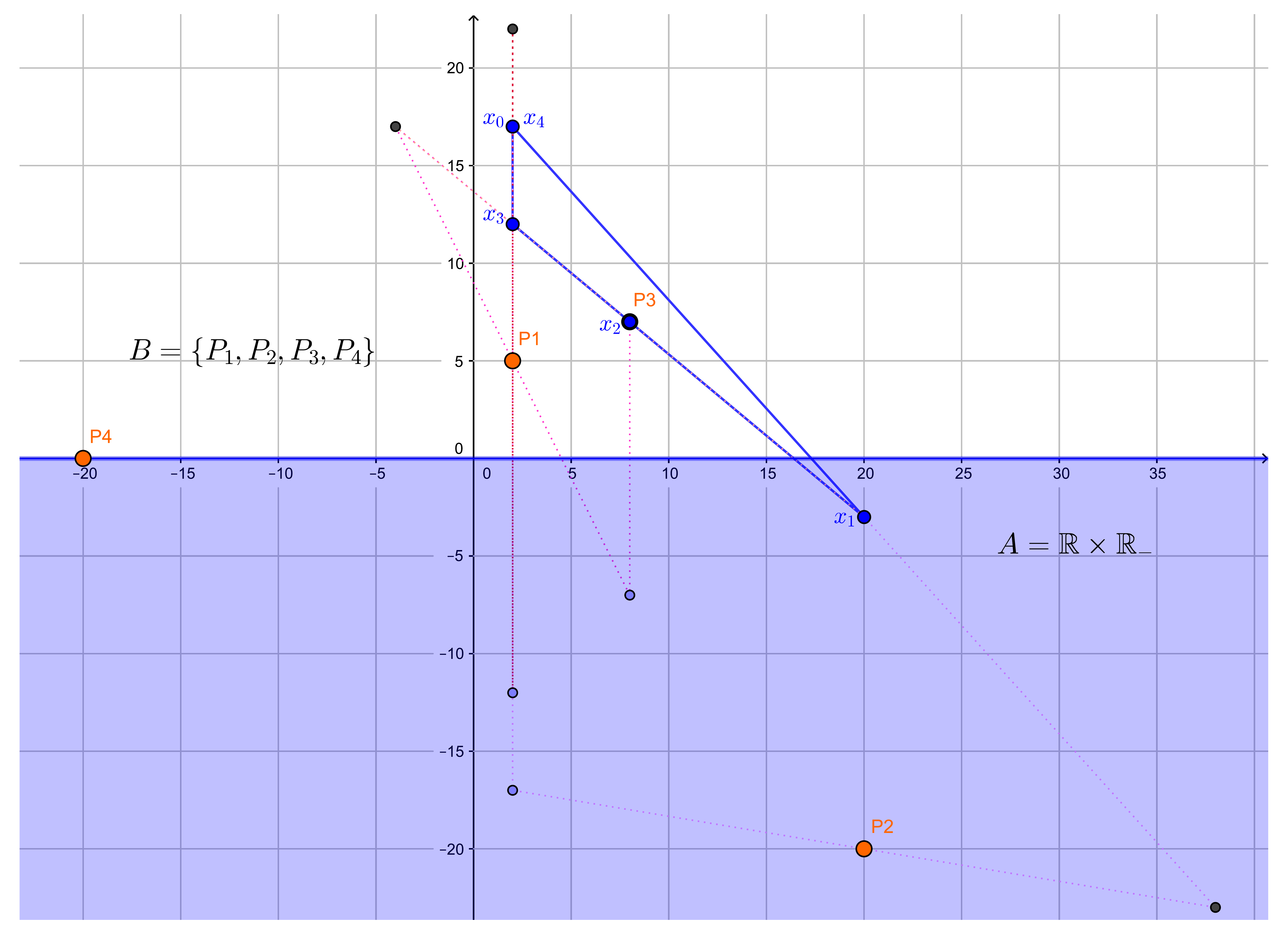}
\caption{A 4-cycle of the DRA for a halfspace and a finite set.}\label{fg:H-Ps}
\end{figure}

\begin{remark}[Order matters]
Notice that if $A$ is a halfspace and $B$ is a finite subset of $X$ such that $A\cap B \neq\varnothing$, 
then every DRA sequence with respect to \emph{$(B, A)$} converges finitely due to \cite[Theorem~4.2]{ABT15}.
Recall from \cite{BM15} that if we work with an affine subspace instead of a halfspace, 
then the quality of convergence of the DRA sequence with respect to $(A, B)$ is the same as the one with respect to $(B, A)$.
\end{remark}

\begin{theorem}
Let $A$ be either a hyperplane or a halfspace of $X$, 
and $B$ be a finite subset of one in two halfspaces generated by $A$, 
and let $x_0$ be in $X$.
Then either: 
(i) the DRA sequence $(x_n)_\nnn$ with respect to $(A, B)$, with starting point $x_0$,  
converges \emph{finitely} to a point $x \in \Fix T_{A,B}$ with $P_Ax \in A\cap B$, 
or (ii) $A\cap B =\varnothing$ and $\|x_n\| \to +\infty$ 
in which case $(P_Ax_n)_\nnn$ converges \emph{finitely} to a best approximation solution $a \in A$ relative to $A$ and $B$
in the sense that $d_B(a) =\min_{a' \in A} d_B(a')$.
\end{theorem}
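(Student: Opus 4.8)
The plan is to make the Douglas--Rachford iteration completely explicit in well-chosen coordinates. First I would \emph{normalise}: translating $\bd A$ to pass through the origin, rescaling the normal vector, and --- in the hyperplane case --- possibly reflecting across $A$; each of these operations conjugates $T_{A,B}$ into a Douglas--Rachford operator of the same type and preserves finite convergence, the condition $A\cap B=\varnothing$, and the quantity $\min_{a'\in A}d_B(a')$. After normalising, for some unit vector $u$ one has either $A=\menge{x\in X}{\scal{x}{u}\le 0}$ or $A=\menge{x\in X}{\scal{x}{u}=0}$, with $B\subseteq\menge{x\in X}{\scal{x}{u}\ge 0}$ in the hyperplane case and $B\subseteq A$ or $B\subseteq\menge{x\in X}{\scal{x}{u}\ge 0}$ in the halfspace case. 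Decomposing $X=\RR u\oplus u^{\perp}$ and writing $x_n=s_nu+w_n$, $b_n=\beta_nu+v_n\in P_BR_Ax_n$ (which is nonempty since $B$ is closed), the explicit formulas for $P_A$ and $R_A$ from Section~\ref{s:aux} yield the key recursion $w_{n+1}=v_n$, together with $s_{n+1}=s_n+\beta_n$ when $A$ is a hyperplane or when $A$ is a halfspace and $x_n\notin A$, and $s_{n+1}=\beta_n$ when $A$ is a halfspace and $x_n\in A$. Thus the $u^{\perp}$-part of each iterate is simply copied from the selected nearest point of $B$, while the "height" $s_n$ evolves by a scalar recursion driven by $\beta_n:=\scal{b_n}{u}$, which is $\ge 0$ whenever $B\subseteq\menge{x\in X}{\scal{x}{u}\ge 0}$ and $\le 0$ when $B\subseteq A$.

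Next I would dispatch the case $A$ a halfspace with $B\subseteq A$, where $A\cap B=B\neq\varnothing$. As soon as some $x_n\in A$ one gets $x_{n+1}=b_n\in B\subseteq A\cap B$, so $P_Bx_{n+1}=\{x_{n+1}\}$ and the sequence is constant from step $n+1$ on; if instead $x_n\notin A$ for every $n$, then $(s_n)_\nnn$ is nonincreasing and bounded below by $0$, hence $\beta_n\to 0$, so $\beta_n=0$ eventually and $(s_n)_\nnn$ stabilises at some $s^{\ast}>0$ (positivity because $x_n\notin A$ throughout), after which the $w_n$-recursion is nearest-point selection of $w_n$ in the finite set $V_0:=\menge{v\in u^{\perp}}{v\in B}$; since $w_n\in V_0$ for large $n$, this forces $w_{n+1}=w_n$ and the iterate becomes stationary at some $x^{\ast}=s^{\ast}u+w^{\ast}$ with $P_Ax^{\ast}=w^{\ast}\in A\cap B$. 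Either way we are in alternative~(i).

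The remaining configurations ($A$ a hyperplane, or $A$ a halfspace with $B\subseteq\menge{x\in X}{\scal{x}{u}\ge 0}$) are, possibly after one transient step that may already land in $A\cap B$, governed by the regime in which $\beta_n\ge 0$ and $(s_n)_\nnn$ is nondecreasing; I then split on $s^{\ast}:=\lim_n s_n$. If $s^{\ast}<+\infty$, then $\sum_n\beta_n<+\infty$, so $\beta_n=0$ and $s_n=s^{\ast}$ eventually; this forces $B\cap u^{\perp}\neq\varnothing$, hence $A\cap B\neq\varnothing$, and as above the $w_n$-recursion collapses to nearest-point selection in $V_0$ with $w_n\in V_0$, so the iterate is eventually constant and $(P_Ax_n)_\nnn$ converges finitely to $w^{\ast}\in A\cap B$ --- alternative~(i). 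If $s^{\ast}=+\infty$, then $\|x_n\|\ge|s_n|\to+\infty$; moreover, once $s_n$ is large the cost $(s_n+\beta)^2+\|w_n-v\|^2$ of choosing $\beta u+v\in B$ (the $u^{\perp}$-terms being uniformly bounded because $w_n$ and $v$ range over finite sets) is minimised only at $\beta=\beta_{\min}:=\min_{b\in B}\scal{b}{u}$, so $\beta_n=\beta_{\min}$ eventually; were $\beta_{\min}=0$ the scalar recursion would stabilise $s_n$, contradicting $s^{\ast}=+\infty$, so $\beta_{\min}>0$ and therefore $A\cap B=\varnothing$. For large $n$ the minimisation over $B$ then reduces to nearest-point selection of $w_n$ in the finite set $V^{\ast}:=\menge{v\in u^{\perp}}{\beta_{\min}u+v\in B}$, with $w_n\in V^{\ast}$, so $w_n$ becomes constant at some $w^{\ast}$ and $(P_Ax_n)_\nnn$ converges finitely to $w^{\ast}\in u^{\perp}\subseteq A$. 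Finally, every $a'\in A$ satisfies $\scal{a'}{u}\le 0$, whence $\scal{b-a'}{u}\ge\scal{b}{u}\ge\beta_{\min}$ for all $b\in B$ and so $d_B(a')\ge\beta_{\min}$; since $\beta_{\min}u+w^{\ast}\in B$ gives $d_B(w^{\ast})\le\beta_{\min}$, we obtain $d_B(w^{\ast})=\beta_{\min}=\min_{a'\in A}d_B(a')$, which is alternative~(ii).

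The main obstacle is the bookkeeping that funnels the halfspace case into the monotone-$s$ regime --- tracking when an iterate can be absorbed into $A\cap B$ and controlling the sign of $\beta_n$ --- together with the repeatedly used observation that once the height component $\beta_n$ stabilises, the $u^{\perp}$-component performs a nearest-point selection onto a \emph{finite} set starting \emph{from a point of that set}, and is therefore instantly stationary. Everything else reduces to elementary estimates with the projector formulas already recorded in Section~\ref{s:aux}.
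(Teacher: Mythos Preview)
Your argument is correct. Both you and the paper normalise to standard coordinates, write $x_n=s_nu+w_n$, $b_n=\beta_nu+v_n$, and exploit the scalar recursion $s_{n+1}=s_n+\beta_n$ together with the finiteness of $B$. The organisational choices differ, however. The paper's key device is the inequality
\[
\|b_{n+1}-b_n\|^2 \le 2\bigl(\scal{x_n}{u}+2\scal{b_n}{u}\bigr)\bigl(\scal{b_n}{u}-\scal{b_{n+1}}{u}\bigr),
\]
obtained by using $b_n$ as a competitor for $b_{n+1}$ at $R_Ax_{n+1}$; this yields monotonicity of $\scal{b_n}{u}$ once some $\scal{x_{n_0}}{u}>0$, hence eventual constancy of $b_n$, and only afterwards is an asymptotic argument invoked to identify the constant height as $\min_{b\in B}\scal{b}{u}$. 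You instead split directly on $s^\ast=\lim s_n$: when $s^\ast<\infty$ you get $\beta_n=0$ eventually and then use $w_n\in V_0\subseteq B$ as a competitor for $b_n$ to force $v_n=w_n$; when $s^\ast=+\infty$ you go straight to the asymptotic dominance argument to pin $\beta_n=\beta_{\min}$ and then repeat the competitor step inside $V^\ast$. Your route avoids the explicit inequality and makes the dichotomy between alternatives~(i) and~(ii) correspond exactly to $s^\ast<\infty$ versus $s^\ast=+\infty$, which is arguably cleaner; the paper's inequality, on the other hand, gives a single mechanism that handles both the stabilisation of $b_n$ and (in Case~1.1) works even without knowing the sign of $s_n$. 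One small point worth tightening in your write-up: the phrase ``nearest-point selection of $w_n$ in $V_0$'' is slightly loose, since $b_n$ is nearest in $B$, not in $V_0$; the precise step is that $w_n\in V_0\subseteq B$ is a competitor at distance $|s^\ast|$, and since $b_n\in V_0$ has distance $\sqrt{(s^\ast)^2+\|v_n-w_n\|^2}$, optimality forces $v_n=w_n$.
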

\begin{proof}
\emph{Case 1:} $A$ is a hyperplane. 
Without loss of generality, we assume that 
\begin{subequations}
\begin{equation}
A =H :=\menge{x \in X}{\scal{x}{u} =0} \quad\text{with}\quad u \in X, \|u\| =1,
\end{equation}
and that 
\begin{equation}
\label{e:B>=0}
(\forall b \in B)\quad \scal{b}{u} \geq 0.
\end{equation}
\end{subequations}
By \cref{ex:proj_h}\ref{ex:proj_hyper},
\begin{equation}
(\forall x \in X)\quad P_Ax =x -\scal{x}{u}u.
\end{equation} 
Therefore,
\begin{equation}
(\forall x \in X)\quad R_Ax =2P_Ax -x =x -2\scal{x}{u}u,
\end{equation}
and also
\begin{equation}
(\forall x \in X)\quad d_A(x) =\|x -P_Ax\| =|\scal{x}{u}|.
\end{equation}
Now setting
\begin{equation}
(\forall\nnn)\quad b_n :=x_{n+1} -x_n +P_Ax_n \in P_BR_Ax_n \subseteq B,
\end{equation}
we have
\begin{subequations}
\begin{align}
\label{e:x+}
(\forall\nnn)\qquad x_{n+1} &=T_{A,B}x_n =x_n -P_Ax_n +P_BR_Ax_n =\scal{x_n}{u}u +b_n, \\
\label{e:x+,u}
\scal{x_{n+1}}{u} &=\scal{\scal{x_n}{u}u +b_n}{u} =\scal{x_n}{u} +\scal{b_n}{u} \geq \scal{x_n}{u}, \\
\label{e:PAx+}
P_Ax_{n+1} &=x_{n+1} -\scal{x_{n+1}}{u}u =b_n -\scal{b_n}{u}u, \\
\label{e:RAx+}
R_Ax_{n+1} &=x_{n+1} -2\scal{x_{n+1}}{u}u =b_n -(\scal{x_n}{u} +2\scal{b_n}{u})u,
\end{align}
\end{subequations}
and so
\begin{equation}
\label{e:x++}
(\forall\nnn)\quad x_{n+2} =(\scal{x_n}{u} +\scal{b_n}{u})u +b_{n+1} =x_{n+1} +\scal{b_n}{u}u +b_{n+1} -b_n.
\end{equation}
It follows that $b_n -R_Ax_{n+1} =(\scal{x_n}{u} +2\scal{b_n}{u})u$, and 
\begin{subequations}
\begin{align}
&\|b_{n+1} -R_Ax_{n+1}\|^2 =\|(b_{n+1} -b_n) +(b_n -R_Ax_{n+1})\|^2 \\ 
&=\|b_{n+1} -b_n\|^2 +2(\scal{x_n}{u} +2\scal{b_n}{u})\scal{b_{n+1} -b_n}{u} +\|b_n -R_Ax_{n+1}\|^2. 
\end{align}
\end{subequations}
From $b_{n+1} =P_BR_Ax_{n+1}$ and $b_n \in B$, we have
$\|b_{n+1} -R_Ax_{n+1}\| \leq \|b_n -R_Ax_{n+1}\|$, which yields
\begin{subequations}
\label{e:bns}
\begin{align}
0 \leq \|b_{n+1} -b_n\|^2 &\leq 2(\scal{x_n}{u} +2\scal{b_n}{u})\scal{b_n -b_{n+1}}{u} \\ 
&=2(\scal{x_n}{u} +2\scal{b_n}{u})(\scal{b_n}{u} -\scal{b_{n+1}}{u}).
\end{align}
\end{subequations}

\emph{Case 1.1:} $(\forall\nnn)$ $\scal{x_n}{u} \leq 0$. 
By combining with \eqref{e:x+,u}, the sequence $(\scal{x_n}{u})_\nnn$ converges, and so
\begin{equation}
\scal{b_n}{u} =\scal{x_{n+1}}{u} -\scal{x_n}{u} \to 0.
\end{equation}
But $(b_n)_\nnn$ lies in the finite set $B$; 
hence, there exists $n_0 \in \NN$ such that $(\forall n\geq n_0)$ $\scal{b_n}{u} =0$, equivalently, $b_n \in A$.
Then \eqref{e:bns} implies $(\forall n\geq n_0)$ $b_{n+1} =b_n$, and by \eqref{e:x++}, $x_{n+2} =x_{n+1} \in \Fix T_{A,B}$.

\emph{Case 1.2:} $(\exists n_0 \in \NN)$ $\scal{x_{n_0}}{u} >0$. 
Then \eqref{e:x+,u} and \eqref{e:B>=0} give
\begin{equation}
\label{e:xubu}
(\forall n\geq n_0)\quad \scal{x_n}{u} +2\scal{b_n}{u} >0.
\end{equation} 
Combining with \eqref{e:bns}, this implies
\begin{equation}
\label{e:bn_decrease}
(\forall\nnn)\quad 0 \leq \scal{b_{n+1}}{u} \leq \scal{b_n}{u},
\end{equation}
and the sequence $(\scal{b_n}{u})_\nnn \subseteq B$ thus converges. 
Since again $B$ is finite, there exists $n_1 \in \NN$, $n_1 \geq n_0$ such that 
$(\forall n\geq n_1)$ $\scal{b_{n+1}}{u} =\scal{b_n}{u}$, 
which yields $b_{n+1} =b_{n} =: b \in B$ due to \eqref{e:bns}.
By combining with \eqref{e:PAx+}, 
\begin{equation}
\label{e:PAx+,b}
(\forall n\geq n_1)\quad P_Ax_{n+1} =b -\scal{b}{u}u \quad\text{and}\quad \|P_Ax_{n+1} -b\| =|\scal{b}{u}| =\scal{b}{u},
\end{equation}
so $(P_Ax_n)_\nnn$ converges finitely.
Furthermore, if $\scal{b}{u} =0$, i.e., $b \in A$, then $b \in A\cap B$, 
in which case $A\cap B \neq\varnothing$ and by \eqref{e:x++},
$(\forall n\geq n_1)$ $x_{n+2} =x_{n+1} \in \Fix T_{A,B}$.

Now assume that $\scal{b}{u} \neq 0$. Then $\scal{b}{u} >0$ due to \eqref{e:B>=0}.
It follows from \eqref{e:x+,u} and \eqref{e:RAx+} that
\begin{equation}
(\forall n \geq n_1)\quad R_Ax_{n+1} =b -(\scal{x_{n_1}}{u} +(n -n_1 +2)\scal{b}{u})u.
\end{equation}
Let $n \geq n_1$, and let $b' \in B$. 
Since $b =b_{n+1} =P_BR_Ax_{n+1}$, we have $\|b -R_Ax_{n+1}\| \leq \|b' -R_Ax_{n+1}\|$,
and so
\begin{equation}
\|b -R_Ax_{n+1}\|^2 \leq \|b' -b\|^2 +2\scal{b' -b}{b -R_Ax_{n+1}} +\|b -R_Ax_{n+1}\|^2,
\end{equation}
which implies
\begin{subequations}
\begin{align}
\|b' -b\|^2 &\geq 2\scal{b -b'}{(\scal{x_{n_1}}{u} +(n -n_1 +2)\scal{b}{u})u} \\
&=2(\scal{x_{n_1}}{u} +(n -n_1 +2)\scal{b}{u})(\scal{b}{u} -\scal{b'}{u}).
\end{align}
\end{subequations}
Noting that $\scal{x_{n_1}}{u} +(n -n_1 +2)\scal{b}{u} \to +\infty$, we deduce $\scal{b}{u} \leq \scal{b'}{u}$. 
Hence 
\begin{equation}
0 <\scal{b}{u} =\min_{b' \in B} \scal{b'}{u} =\min_{b' \in B} d_A(b').
\end{equation}
This yields $A\cap B =\varnothing$, and by \eqref{e:x+,u},
\begin{equation}
\|x_n\| \geq \scal{x_n}{u} =\scal{x_{n_1}}{u} +(n -n_1)\scal{b}{u} \to +\infty \quad\text{as}\quad n \to +\infty,
\end{equation}
while by \eqref{e:PAx+,b}, $(\forall n\geq n_1)$ $(P_Ax_{n+1}, b)$ is a best approximation pair relative to $A$ and $B$.

\emph{Case 2:} $A$ is a halfspace. By assumption, we assume without loss of generality that either
\begin{subequations}
\begin{equation}
\label{e:A=H+}
A =H_+ :=\menge{x \in X}{\scal{x}{u} \geq 0} \quad\text{and}\quad B \subseteq H_+,
\end{equation}
or
\begin{equation}
\label{e:A=H-}
A =H_- :=\menge{x \in X}{\scal{x}{u} \leq 0} \quad\text{and}\quad B \subseteq H_+,
\end{equation}
\end{subequations}
where $u \in X$ and $\|u\| =1$.

\emph{Case 2.1:} \eqref{e:A=H+} holds. 
If $(\forall\nnn)$ $\scal{x_n}{u} \leq 0$, i.e. $x_n \in H_-$, then $P_Ax_n =P_Hx_n$, so 
\begin{equation}
x_{n+1} =T_{A,B}x_n =T_{H,B}x_n,
\end{equation}
and according to \emph{Case 1.1}, 
we must have $H\cap B \neq\varnothing$
and the finite convergence of $(x_n)_\nnn$. 
If $(\exists n_0 \in \NN)$ $\scal{x_{n_0}}{u} \geq 0$, i.e. $x_{n_0} \in H_+$, then $R_Ax_{n_0} =P_Ax_{n_0} =x_{n_0}$,
which yields $x_{n_0+1} =x_{n_0} -P_Ax_{n_0} +P_BR_Ax_{n_0} =P_Bx_{n_0} \in B =A\cap B$, and we are done.

\emph{Case 2.2:} \eqref{e:A=H-} holds. 
If $\scal{x_0}{u} \leq 0$, i.e. $x_0 \in H_-$, then $R_Ax_0 =P_Ax_0 =x_0$, and thus
$x_1 =x_0 -P_Ax_0 +P_BR_Ax_0 =P_Bx_0 \in B \subseteq H_+$.
It is therefore sufficient to consider $\scal{x_0}{u} \geq 0$, i.e. $x_0 \in H_+$. 
Then $P_Ax_0 =P_Hx_0$, $x_1 =T_{A,B}x_0 =T_{H,B}x_0$, 
and by \eqref{e:x+,u}, $\scal{x_1}{u} \geq \scal{x_0}{u} \geq 0$. 
This yields 
\begin{equation}
(\forall\nnn)\quad x_n \in H_+ \quad\text{and}\quad x_{n+1} =T_{H,B}x_n.
\end{equation}
Now apply Case~1.
\end{proof}

\begin{example}
\label{ex:H-2Ps}
Suppose that $X =\RR^2$, $A =\RR\times \{0\}$ and $B =\{(0, 1), (1, 2)\}$. 
Then $A\cap B =\varnothing$, and for starting point $x_0 \in \left]1, +\infty\right[\times \{-1\}$, 
the DRA sequence $(x_n)_\nnn$ with respect to $(A, B)$ satisfies 
$(\forall n \in \{2, 3, \dots\})$ $x_n =(0, n)$ and $P_Ax_n =(0, 0)$. 
See \cref{fg:H-2Ps} for an illustration, created with \texttt{GeoGebra} \cite{GGB}.
\end{example}

\begin{figure}[!ht]
\centering
\includegraphics[width=0.6\columnwidth]{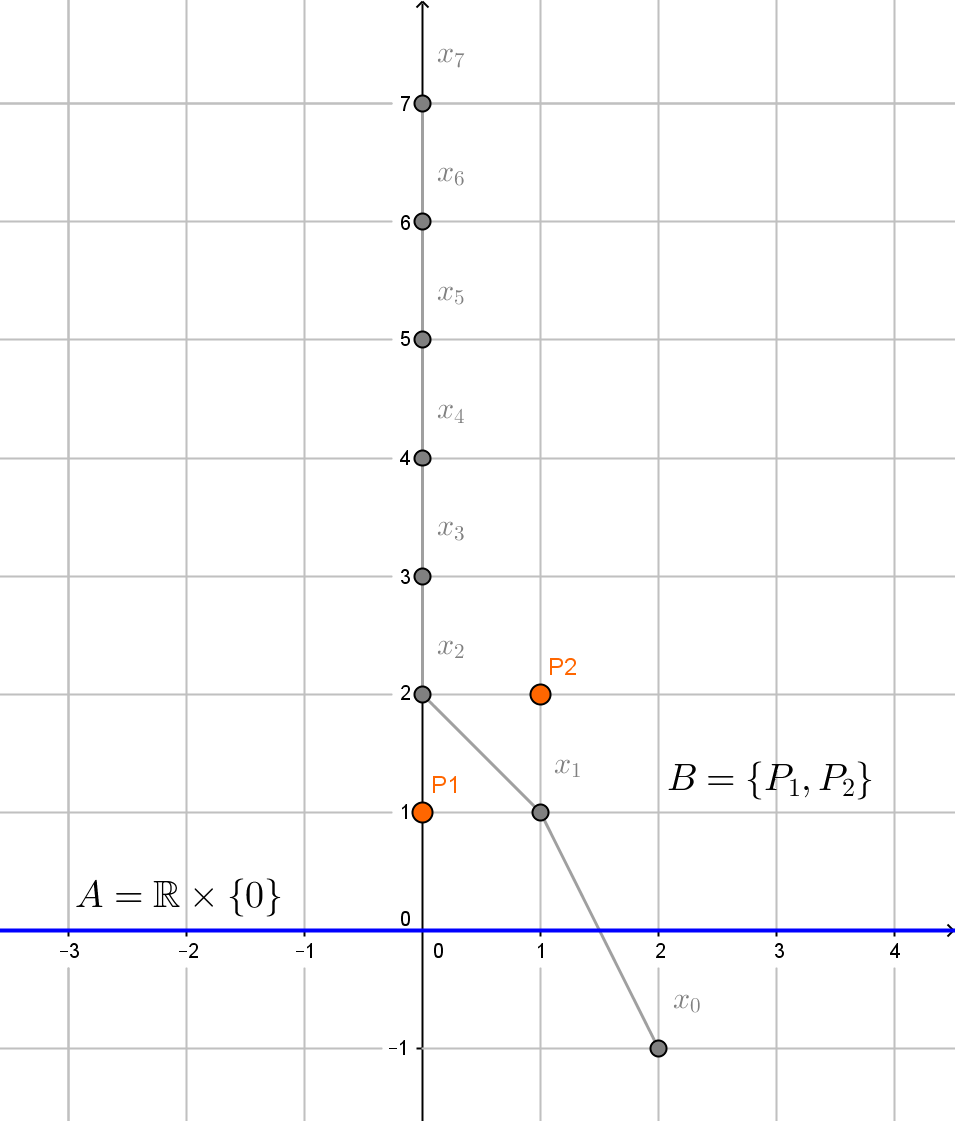}
\caption{An illustration for \cref{ex:H-2Ps} with the starting point $x_0 =(2, -1)$.}\label{fg:H-2Ps}
\end{figure}

\section{When $A$ is an affine subspace and $B$ is a polyhedron}
\label{s:8}

In view of \cref{d:local}, we recall a result on finite convergence of the Douglas--Rachford algorithm under Slater's condition. 

\begin{fact}[Finite convergence of DRA in the affine-polyhedral case]
\label{f:A-P}
Let $A$ be an affine subspace and $B$ be a closed convex subset of $X$ such that \emph{Slater's condition}
\begin{equation}
\label{e:AintB}
A\cap \inte B \neq\varnothing
\end{equation}
holds. Suppose that $B$ is locally identical with some polyhedral set around $A\cap \bd B$.
Then every DRA sequence $(x_n)_\nnn$ with respect to $(A, B)$ converges \emph{finitely} to a point in $A\cap B$.
\end{fact}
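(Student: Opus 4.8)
The plan is to use \cref{t:finite} to reduce first to the case where $B$ is a polyhedron, then — by passing to the constraints active at the limit point — to the case of a linear subspace and a polyhedral cone, which is the genuinely delicate one. If $A\cap\bd B=\varnothing$, then $A\cap B\neq\varnothing$ (by \eqref{e:AintB}) together with the closedness of $B$ forces $A\subseteq\inte B$, and \cref{t:finite}\ref{t:finite_AbdryB} already gives finite convergence. Assume therefore $A\cap\bd B\neq\varnothing$ and let $P$ be a polyhedron with $(B,P)$ locally identical around $A\cap\bd B$. Applying \cref{t:finite}\ref{t:finite_AintB} with $A':=A$ and $B':=P$ (the pair $(A,A)$ is trivially locally identical around $A\cap\bd B$, and $0\in\inte(A-B)$ under \eqref{e:AintB} gives the limit in $A\cap B$ via \cref{f:cvg}\ref{f:cvg_AnB}), the assertion reduces to the statement: \emph{for every affine subspace $A$ and every polyhedron $P$ with $A\cap\inte P\neq\varnothing$, the DRA for $(A,P)$ converges finitely globally.} So from now on $B=\bigcap_{j\in J}H_j$ is a polyhedron, with each $H_j$ a halfspace.

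By \cref{f:cvg}\ref{f:cvg_AnB} the sequence $(x_n)_\nnn$ converges to some $x^*\in A\cap B$, finitely if $x^*\in A\cap\inte B$; so assume $x^*\in A\cap\bd B$. Let $J^*:=\menge{j\in J}{x^*\in\bd H_j}$ and $\widetilde B:=\bigcap_{j\in J^*}H_j$. Then $B\subseteq\widetilde B$, and, since the inactive constraints do not bind near $x^*$, $B$ and $\widetilde B$ are locally identical around $x^*$ (cf.\ \cref{l:id}). As $x^*=P_Ax^*\in\Fix T_{A,B}$, \cref{l:expand} applied with $A'=A$ and $B'=\widetilde B$ yields an index $n_0$ with $x_{n+1}=T_{A,\widetilde B}x_n$ for all $n\geq n_0$. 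Translating by $-x^*$ — so that $L:=A-x^*$ is a linear subspace and $K:=\widetilde B-x^*$ is a polyhedral cone with $L\cap\inte K\neq\varnothing$, and using that projectors commute with translations — we are left with the following claim: \emph{if $(z_n)_\nnn$ is the DRA sequence for $(L,K)$ and $z_n\to 0$, then $(z_n)_\nnn$ is eventually constant.}

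In this cone setting $T_{L,K}=\Id-P_L+P_KR_L$ is firmly nonexpansive, continuous, positively homogeneous (since $P_L,R_L$ are linear and $P_K$, being the projection onto a cone, is positively homogeneous), and piecewise linear: $X$ decomposes into finitely many polyhedral cones on each of which $P_K\circ R_L$ is a linear map of the form $P_{\lspan F}\circ R_L$ for some face $F$ of $K$, so that on each such cone $T_{L,K}$ coincides with the two-subspace Douglas--Rachford operator $T_{L,\lspan F}$. Moreover $L\cap\inte K\neq\varnothing$ forces $0\in\inte(L-K)$, whence $N_{L-K}(0)=\{0\}$ and, by \cref{f:cvg}\ref{f:cvg_FixT}, $\Fix T_{L,K}=L\cap K$. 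I would then induct on the dimension of $X$ and, for fixed dimension, on the number of facets of $K$: the base cases $K=X$ (where $T_{L,K}=P_L$ and the iteration stops in one step) and $K$ a halfspace are handled by a direct computation (compare the (hyperplane,halfspace) case of \cref{t:H-H}); in the inductive step one shows that once $z_n$ is close enough to $0$ the map $z\mapsto P_KR_Lz$ sees only the active facets near $0$, so that either an inactive facet can be discarded or an active facet $H_j$ can be imposed as an equality, in the latter case descending to the hyperplane $\bd H_j$, a lower-dimensional ambient space in which $L\cap\bd H_j$ still meets the relative interior of $K\cap\bd H_j$, so that Slater's condition persists.

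The step I expect to be the main obstacle is excluding the possibility that $(z_n)_\nnn$ converges to $0$ while eventually remaining inside a region on which $T_{L,K}$ acts as $T_{L,\lspan F}$ for a proper face $F$ of $K$ with a positive Friedrichs angle between $L$ and $\lspan F$: on such a region the convergence would be only linear, never exactly finite. Ruling this out is precisely where the Slater condition on the cone pair $(L,K)$ must be exploited — a unit vector realizing a positive angle between $L$ and the span of a proper face of $K$ cannot be a limiting direction of a DRA sequence of $(L,K)$ once $L$ meets $\inte K$. Making this rigorous, via asymptotic regularity, the summability $\sum_n\|z_{n+1}-z_n\|^2<\infty$, positive homogeneity, and a careful account of which faces $P_KR_Lz_n$ can land on, is the technical heart of the argument, and is the content established in \cite{BDNP15}.
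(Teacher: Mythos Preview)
The paper does not prove this here; it is stated as a \emph{Fact} and the proof is a one-line citation to \cite[Theorem~3.7 and Definition~2.7]{BDNP15}, after observing that the ``locally polyhedral'' hypothesis there coincides with \cref{d:local}. Your reductions via \cref{t:finite}\ref{t:finite_AintB} and \cref{l:expand} to a linear subspace $L$ and a polyhedral cone $K$ with $L\cap\inte K\neq\varnothing$ are correct and do clarify the structure, but they are not needed: the theorem in \cite{BDNP15} already covers the full statement as written, not merely the conical case.

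Where your sketch has a genuine gap is the inductive step for the cone case. After translating, \emph{every} facet of $K$ passes through the apex $0$, so there are no inactive facets near $0$ to discard; and the alternative ``impose an active facet $H_j$ as an equality and descend to $\bd H_j$'' is not justified --- there is no mechanism forcing the DRA for $(L,K)$ to coincide with a DRA run inside the hyperplane $\bd H_j$, nor does the relative Slater condition $(L\cap\bd H_j)\cap\reli(K\cap\bd H_j)\neq\varnothing$ inherit from $L\cap\inte K\neq\varnothing$ (take $L$ a line transversal to a facet of a three-dimensional $K$). Moreover the iterates need not eventually remain in a single linearity region of $T_{L,K}$, so the dichotomy you propose does not exhaust the possibilities. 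You do correctly identify the real obstruction --- ruling out merely linear convergence on a region where $T_{L,K}$ agrees with $T_{L,\lspan F}$ for a proper face $F$ --- and you correctly locate its resolution in \cite{BDNP15}; but since that is precisely the content the paper cites, your proposal and the paper's proof ultimately coincide at the point where the actual work is done.
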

\begin{proof}
Combine \cite[Theorem~3.7 and Definition~2.7]{BDNP15} with \cref{d:local}.
\end{proof}

A natural question is whether the conclusion of \cref{f:A-P} holds 
when the Slater's condition $A\cap \inte B \neq\varnothing$ is replaced by $A\cap B \neq\varnothing$ and $\inte B \neq\varnothing$. 
In the sequel, we shall provide a positive answer in $\RR^2$ (\cref{t:poly2D}) and a negative answer in $\RR^3$ (\cref{ex:poly3D}).
For the next little while, we work with 
\begin{empheq}[box=\mybluebox]{equation}
X =\RR^2 \quad\text{and}\quad A =\RR\times \{0\},
\end{empheq}
and consider the (counter-clockwise) rotator defined by
\begin{equation}
(\forall\theta \in \RR)\quad \mathcal R_\theta :=
\begin{bmatrix}
\cos\theta & -\sin\theta \\ 
\sin\theta & \cos\theta 
\end{bmatrix}.
\end{equation}
Let $\theta \in [0, \pi]$, and set
\begin{equation}
e_0 :=(1, 0), \quad e_{\pi/2} :=(0, 1), \quad e_\theta :=(\cos\theta)e_0 +(\sin\theta)e_{\pi/2}.
\end{equation}
Then $\RP\times \{0\} =\RP\cdot e_0$ is the positive $x$-axis, 
and $\mathcal R_\theta(\RP\times \{0\}) =\RP\cdot e_\theta$
is the ray starting at $0 \in X$ and making an angle of $\theta$ 
with respect to $\RP\times \{0\}$ in counter-clockwise direction.

For $x, y \in X$, we write $\angle(x, y) :=\theta$ if $y \in \RP\mathcal R_\theta(x)$,
and $\angle(x, y) =\theta -\pi$ if $y \in \RM\mathcal R_\theta(x)$.

\begin{fact}
\label{f:2lines}
Let $\theta \in [0, \pi]$. Then
\begin{equation}
T_{A,\mathcal R_\theta(A)} =\Id -P_A +P_{\mathcal R_\theta(A)}R_A =(\cos\theta)\mathcal R_\theta.
\end{equation}
\end{fact}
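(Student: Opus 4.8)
The plan is to exploit the fact that both $A=\RR\times\{0\}$ and $\mathcal{R}_\theta(A)$ are one-dimensional linear subspaces of $\RR^2$, so that every projector and reflector in sight is linear and the claimed identity collapses to a short explicit computation. The first equality in the statement is immediate: since $A$ is convex, $P_A$ is single-valued, so the general identity $T_{A,B}=\Id-P_A+P_BR_A$ recorded in Section~\ref{s:aux} applies with $B=\mathcal{R}_\theta(A)$, giving $T_{A,\mathcal{R}_\theta(A)}=\Id-P_A+P_{\mathcal{R}_\theta(A)}R_A$. It remains to evaluate the right-hand side. Writing $x=x_1e_0+x_2e_{\pi/2}$, we have $P_Ax=x_1e_0$, hence $(\Id-P_A)x=x_2e_{\pi/2}$ and $R_Ax=2P_Ax-x=x_1e_0-x_2e_{\pi/2}$ (the reflection across the $x$-axis).

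Next I would compute $P_{\mathcal{R}_\theta(A)}$. Since $\mathcal{R}_\theta(A)=\RR\cdot e_\theta$ is the line through the origin spanned by the \emph{unit} vector $e_\theta=(\cos\theta)e_0+(\sin\theta)e_{\pi/2}$, orthogonal projection onto it is $y\mapsto\scal{y}{e_\theta}e_\theta$. Using $\scal{e_0}{e_\theta}=\cos\theta$ and $\scal{e_{\pi/2}}{e_\theta}=\sin\theta$, one gets $\scal{R_Ax}{e_\theta}=x_1\cos\theta-x_2\sin\theta$, so $P_{\mathcal{R}_\theta(A)}R_Ax=(x_1\cos\theta-x_2\sin\theta)e_\theta=(x_1\cos\theta-x_2\sin\theta)\bigl((\cos\theta)e_0+(\sin\theta)e_{\pi/2}\bigr)$.

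Adding $(\Id-P_A)x=x_2e_{\pi/2}$ and collecting the $e_0$- and $e_{\pi/2}$-components, the $e_0$-component is $x_1\cos^2\theta-x_2\cos\theta\sin\theta$ and the $e_{\pi/2}$-component is $x_1\cos\theta\sin\theta-x_2\sin^2\theta+x_2=x_1\cos\theta\sin\theta+x_2\cos^2\theta$, using $\sin^2\theta+\cos^2\theta=1$. Factoring out $\cos\theta$, this equals $\cos\theta\bigl((x_1\cos\theta-x_2\sin\theta)e_0+(x_1\sin\theta+x_2\cos\theta)e_{\pi/2}\bigr)=(\cos\theta)\mathcal{R}_\theta x$. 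As $x$ was arbitrary, $T_{A,\mathcal{R}_\theta(A)}=(\cos\theta)\mathcal{R}_\theta$.

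There is essentially no obstacle: the only points requiring care are noting that $A$ is a subspace so $P_A$ is single-valued (justifying the operator identity) and getting the formula for projection onto the line $\RR\cdot e_\theta$ correct; the rest is routine trigonometric bookkeeping. A fully equivalent and perhaps cleanest presentation is purely matrix-based: $P_A=\bigl(\begin{smallmatrix}1&0\\0&0\end{smallmatrix}\bigr)$, $R_A=\bigl(\begin{smallmatrix}1&0\\0&-1\end{smallmatrix}\bigr)$, $P_{\mathcal{R}_\theta(A)}=\bigl(\begin{smallmatrix}\cos^2\theta&\cos\theta\sin\theta\\\cos\theta\sin\theta&\sin^2\theta\end{smallmatrix}\bigr)$, and multiplying out $\Id-P_A+P_{\mathcal{R}_\theta(A)}R_A$ yields $\bigl(\begin{smallmatrix}\cos^2\theta&-\cos\theta\sin\theta\\\cos\theta\sin\theta&\cos^2\theta\end{smallmatrix}\bigr)=(\cos\theta)\mathcal{R}_\theta$.
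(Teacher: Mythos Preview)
Your computation is correct. The paper, however, does not actually prove this fact: it simply cites \cite[Section~5]{BBNPW14}, where the Douglas--Rachford operator for two linear subspaces is analyzed in general. Your approach is thus more elementary and fully self-contained --- you just write down the matrices for $P_A$, $R_A$, and $P_{\mathcal R_\theta(A)}$ and multiply --- whereas the paper's route appeals to an external result that handles arbitrary subspaces (and hence gives more, but at the cost of requiring the reader to consult another source). For a two-dimensional statement like this one, your direct verification is arguably preferable.
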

\begin{proof}
This follows from \cite[Section~5]{BBNPW14}.
\end{proof}

\begin{lemma}
\label{l:raystep}
Assume that $\theta \in [0, \pi]$, $B =\mathcal R_\theta(\RP\times \{0\})$, $H =B^\oplus$, and $H' =R_A(H)$. 
Let $x =(\alpha, \beta) \in X$, and set $x_+ =T_{A,B}x$. 
Then $x_+ =(0, \beta)$ if $x \not\in H'$, and $x_+ =(\cos\theta)\mathcal R_\theta(z)$ otherwise.
In the latter case, $x_+ =0$ if $\theta =\pi/2$, and
\begin{equation}
\label{e:raystep}
\angle(x, x_+) =
\begin{cases}
\theta, &\text{ if } \theta <\pi/2; \\ 
\theta -\pi, &\text{ if } \theta >\pi/2. 
\end{cases}
\end{equation}
Furthermore, 
\begin{equation}
\Fix T_{A,B} =
\begin{cases}
\RP\times \RR, &\text{ if } \theta =0; \\ 
\{0\}\times \RP, &\text{ if } 0 <\theta <\pi; \\ 
\RM\times \RR, &\text{ if } \theta =\pi. 
\end{cases}
\end{equation}
\end{lemma}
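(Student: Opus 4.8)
The plan is to compute $x_+$ directly from $T_{A,B}=\Id-P_A+P_BR_A$ --- legitimate since $A$ is a subspace, so $P_A$ is single-valued --- using $P_A(\alpha,\beta)=(\alpha,0)$ and $R_A(\alpha,\beta)=(\alpha,-\beta)$, and then to invoke \cref{f:2lines}. First I would record two elementary facts about the ray $B=\RP e_\theta$: its dual cone is the closed halfspace $H=B^\oplus=\menge{y\in X}{\scal{y}{e_\theta}\geq 0}$, and, writing points of $B$ as $te_\theta$ with $t\in\RP$ and minimizing $\|y-te_\theta\|^2$ over $t$,
\begin{equation}
P_By=
\begin{cases}
\scal{y}{e_\theta}\,e_\theta, & \text{if } \scal{y}{e_\theta}\geq 0,\\
0, & \text{if } \scal{y}{e_\theta}< 0.
\end{cases}
\end{equation}
The key observations are that $\scal{y}{e_\theta}\geq 0\Leftrightarrow y\in H$; that, since $R_A$ is an involution, $R_Ax\in H\Leftrightarrow x\in R_A(H)=H'$; and that in the ``$\geq 0$'' branch the displayed formula is exactly the projection onto the full line $\RR e_\theta=\mathcal R_\theta(A)$.

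Granting these, the dichotomy is immediate. If $x\notin H'$ then $R_Ax\notin H$, so $P_BR_Ax=0$ and $x_+=x-P_Ax=(\alpha,\beta)-(\alpha,0)=(0,\beta)$. If $x\in H'$ then $R_Ax\in H$, so $P_BR_Ax=P_{\mathcal R_\theta(A)}R_Ax$, whence $x_+=(\Id-P_A+P_{\mathcal R_\theta(A)}R_A)x=T_{A,\mathcal R_\theta(A)}x=(\cos\theta)\mathcal R_\theta x$ by \cref{f:2lines}. The three sub-assertions in this latter case then follow from the sign of $\cos\theta$: if $\theta=\pi/2$ then $\cos\theta=0$ and $x_+=0$; if $\theta<\pi/2$ then $x_+\in\RPP\mathcal R_\theta(x)\subseteq\RP\mathcal R_\theta(x)$, so $\angle(x,x_+)=\theta$; and if $\theta>\pi/2$ then $x_+\in\RMM\mathcal R_\theta(x)\subseteq\RM\mathcal R_\theta(x)$, so $\angle(x,x_+)=\theta-\pi$ (the case $x=0$ being trivial).

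For $\Fix T_{A,B}$ I would again split along the same dichotomy, using that $x\in\Fix T_{A,B}\Leftrightarrow x=x_+$. A point $x\notin H'$ is fixed iff $x=(0,\beta)$, and $(0,\beta)\in H'\Leftrightarrow R_A(0,\beta)=(0,-\beta)\in H\Leftrightarrow -\beta\sin\theta\geq 0$; hence this region contributes $\{0\}\times\RPP$ when $0<\theta<\pi$ and nothing when $\theta\in\{0,\pi\}$. A point $x\in H'$ is fixed iff $(\cos\theta)\mathcal R_\theta x=x$; for $0<\theta<\pi$ the map $(\cos\theta)\mathcal R_\theta$ is a planar rotation scaled by the factor $|\cos\theta|<1$, so it has no nonzero fixed point and only $x=0$ (which does lie in $H'$) survives, whereas for $\theta\in\{0,\pi\}$ we have $(\cos\theta)\mathcal R_\theta=\Id$, so all of $H'$ is fixed --- and $H'=R_A(B^\oplus)$ equals $\RP\times\RR$ for $\theta=0$ and $\RM\times\RR$ for $\theta=\pi$. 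Taking in each regime the union of the two contributions yields $\RP\times\RR$, $\{0\}\times\RP$, and $\RM\times\RR$, respectively.

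I do not anticipate a genuine obstacle: the only care required is the bookkeeping for $H'$ and for the membership $(0,\beta)\in H'$ across the three regimes $\theta=0$, $0<\theta<\pi$, $\theta=\pi$, together with the clean statement that a planar rotation scaled by a factor of modulus strictly below $1$ fixes only the origin.
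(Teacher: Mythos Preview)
Your proposal is correct and follows essentially the same route as the paper: compute $P_Ax$ and $R_Ax$, split according to whether $R_Ax$ lies in $H=B^\oplus$ (equivalently $x\in H'$), project onto the origin in one branch and onto the full line $\mathcal R_\theta(A)$ in the other, then invoke \cref{f:2lines}. The paper dispatches the angle formula and the fixed-point computation with ``the rest is clear,'' whereas you spell these out explicitly via the sign of $\cos\theta$ and the observation that $(\cos\theta)\mathcal R_\theta$ has operator norm $|\cos\theta|<1$ for $0<\theta<\pi$; this added detail is sound and matches the intended argument.
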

\begin{proof}
We have $P_Ax =(\alpha, 0)$ and $R_Ax =(\alpha, -\beta)$.
If $x =(\alpha, \beta) \not\in H'$, then $R_Ax \not\in H$, and so $P_BR_Ax =(0, 0)$, which yields
\begin{equation}
x_+ =(\Id -P_A +P_BR_A)x =(\alpha, \beta) -(\alpha, 0) +(0, 0) =(0, \beta).
\end{equation} 
Now we consider the case $x \in H'$. 
Then $R_Ax \in H$, so $P_BR_Ax =P_{\mathcal R_\theta(A)}R_Ax$, 
and by applying \cref{f:2lines}, 
\begin{equation}
x_+ =(\cos\theta)\mathcal R_\theta(x).
\end{equation}
The rest is clear.
\end{proof}

\begin{lemma}
\label{l:ray}
Let 
\begin{equation}
A =\RR\times \{0\} \quad\text{and}\quad B =\mathcal R_\theta(\RP\times \{0\}),
\end{equation}
where $\theta \in [0, \pi]$.
Then every DRA sequence $(x_n)_\nnn$ with respect to $(A, B)$ converges to a point $x \in \Fix T_{A,B}$,
and the ``shadow sequence'' $(P_Ax_n)_\nnn$ converges to $P_Ax \in A\cap B$ 
in at most $N$ iterations, where
\begin{equation}
N =
\begin{cases}
\lfloor \frac{\pi}{\theta} \rfloor +3, &\text{ if } \theta \leq \pi/2; \\ 
\lfloor \frac{\pi}{\pi -\theta} \rfloor +3, &\text{ if } \theta > \pi/2. 
\end{cases}
\end{equation}
\end{lemma}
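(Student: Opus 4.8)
The plan is to run the explicit one‑step description furnished by \cref{l:raystep} and to track the \emph{direction} (angle) of the iterates, showing that after a controlled number of steps an iterate lands in $\Fix T_{A,B}$. Once that happens the DRA sequence is constant from then on — note that $A$ is a line and $B$ is a ray, both convex, so $T_{A,B}$ is single‑valued — and then $x_n\to x\in\Fix T_{A,B}$ with $P_Ax\in A\cap B$ by \eqref{e:xFixT'}, so that both convergence statements follow. Thus everything reduces to bounding the first index $n$ with $x_n\in\Fix T_{A,B}$.

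First I would dispose of the exceptional angles. For $\theta\in\{0,\pi\}$ one checks from \cref{l:raystep} that $H'=R_A(B^\oplus)$ is the closed half‑plane on one side of $A$, that $T_{A,B}$ fixes every point of $H'$, and that it maps every point off $H'$ into $\{0\}\times\RR\subseteq\Fix T_{A,B}$; hence convergence in at most one step. For $\theta=\pi/2$, \cref{l:raystep} gives $x_+=0\in\Fix T_{A,B}$ when $x\in H'$ and $x_+=(0,\beta)\in\{0\}\times\RP=\Fix T_{A,B}$ when $x\notin H'$, again in at most one step. For $\theta\in(\pi/2,\pi)$, reflecting the whole configuration across the $y$‑axis fixes $A$, carries $B=\mathcal R_\theta(\RP\times\{0\})$ to $\mathcal R_{\pi-\theta}(\RP\times\{0\})$ and $\Fix T_{A,B}$ to the corresponding fixed‑point set; since $T$ commutes with orthogonal transformations (a device already used above), it suffices to treat $\theta\in(0,\pi/2)$, and the step bound obtained there for the angle $\pi-\theta$ is exactly $\lfloor\pi/(\pi-\theta)\rfloor+3=N$.

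So assume $\theta\in(0,\pi/2)$. From $H=B^\oplus=\{y:\scal{y}{e_\theta}\ge0\}$ and $R_A(\alpha,\beta)=(\alpha,-\beta)$ one gets $H'=\{x:\scal{x}{e_{-\theta}}\ge0\}$, which as a cone is the set of rays whose angle lies in $[-\theta-\pi/2,\,-\theta+\pi/2]$, while $\Fix T_{A,B}=\{0\}\times\RP$. By \cref{l:raystep}, one DRA step sends $(\alpha,\beta)\mapsto(0,\beta)$ off $H'$, and on $H'$ it is $x\mapsto(\cos\theta)\mathcal R_\theta x$, i.e.\ it adds $\theta$ to the angle and keeps the iterate nonzero (as $\cos\theta\ne0$). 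The geometric heart of the matter is a sign observation: if $x\in H'$ but its image leaves $H'$, then the image angle lies in $[-\pi/2,\pi/2]$ (because the preimage angle lies in $H'$) yet outside $[-\theta-\pi/2,-\theta+\pi/2]$, hence in $(-\theta+\pi/2,\,\pi/2]\subseteq(0,\pi/2]$, so the image has strictly positive second coordinate. Consequently the trajectory can reach the negative $y$‑axis only by a single off‑$H'$ step taken from $x_0$ itself (when $x_0\notin H'$ and its second coordinate is negative).

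Now the counting. If $x_0\in\Fix T_{A,B}$ there is nothing to do; otherwise: (a) if $x_0\in H'$, the angle increases by $\theta$ at each step until the iterate leaves $H'$, and since $H'$ is a half‑plane (angular width $\pi$) at most $\lfloor\pi/\theta\rfloor+1$ consecutive iterates can lie in $H'$; the first iterate leaving $H'$ has positive second coordinate by the observation, so the next iterate is $(0,\beta)$ with $\beta>0$, i.e.\ in $\Fix T_{A,B}$, for a total of at most $\lfloor\pi/\theta\rfloor+2$ steps. (b) If $x_0\notin H'$, one step gives $(0,\beta_0)$; if $\beta_0\ge0$ this already lies in $\Fix T_{A,B}$, while if $\beta_0<0$ we sit on the negative $y$‑axis, whose angle $-\pi/2$ lies in $H'$, and the rotation applied to the angles $-\pi/2+k\theta$ (which stay in $H'$ exactly while $(k+1)\theta\le\pi$) leaves $H'$ after at most $\lfloor\pi/\theta\rfloor$ rotations, again with positive second coordinate, so one further step lands in $\Fix T_{A,B}$, for a total of at most $\lfloor\pi/\theta\rfloor+2$ steps. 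In all cases $x_n\in\Fix T_{A,B}$ for some $n\le\lfloor\pi/\theta\rfloor+2\le N$, which finishes the proof. The only genuinely delicate point is the sign observation in the preceding paragraph: it is exactly what rules out an indefinite alternation between the ``rotate inside $H'$'' regime and the ``collapse onto the $y$‑axis'' regime; modulo it, the rest is bookkeeping with angles taken modulo $2\pi$.
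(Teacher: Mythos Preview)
Your proof is correct and follows essentially the same strategy as the paper: partition the plane according to membership in $H'=R_A(B^\oplus)$, use \cref{l:raystep} to see that off $H'$ an iterate collapses to the $y$-axis while on $H'$ it rotates by $\theta$, and count rotations to exit $H'$. Your explicit ``sign observation'' (that the first iterate to leave $H'$ from inside must have positive second coordinate, hence lands in what the paper calls $R_3$ rather than $R_1$) is precisely the step the paper asserts without justification (``which yields $x_{n_0}\in R_3$''), and your symmetry reduction of $\theta\in(\pi/2,\pi)$ to $\pi-\theta$ is what the paper's ``without loss of generality'' encodes.
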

\begin{proof}
Set $H =B^\oplus$, and $H' =R_A(H)$.
We will study the behavior of the iterations in regions
\begin{subequations}
\begin{align}
R_1 &=\menge{(\alpha, \beta) \in X}{(\alpha, \beta) \not\in H',\; \beta <0}, \\ 
R_2 &=H', \\
R_3 &=\menge{(\alpha, \beta) \in X}{(\alpha, \beta) \not\in H',\; \beta \geq 0}
\end{align}
\end{subequations}
as shown in Figure~\ref{fg:ray}.
\begin{figure}[!ht]
\centering
\includegraphics[width =0.48\columnwidth]{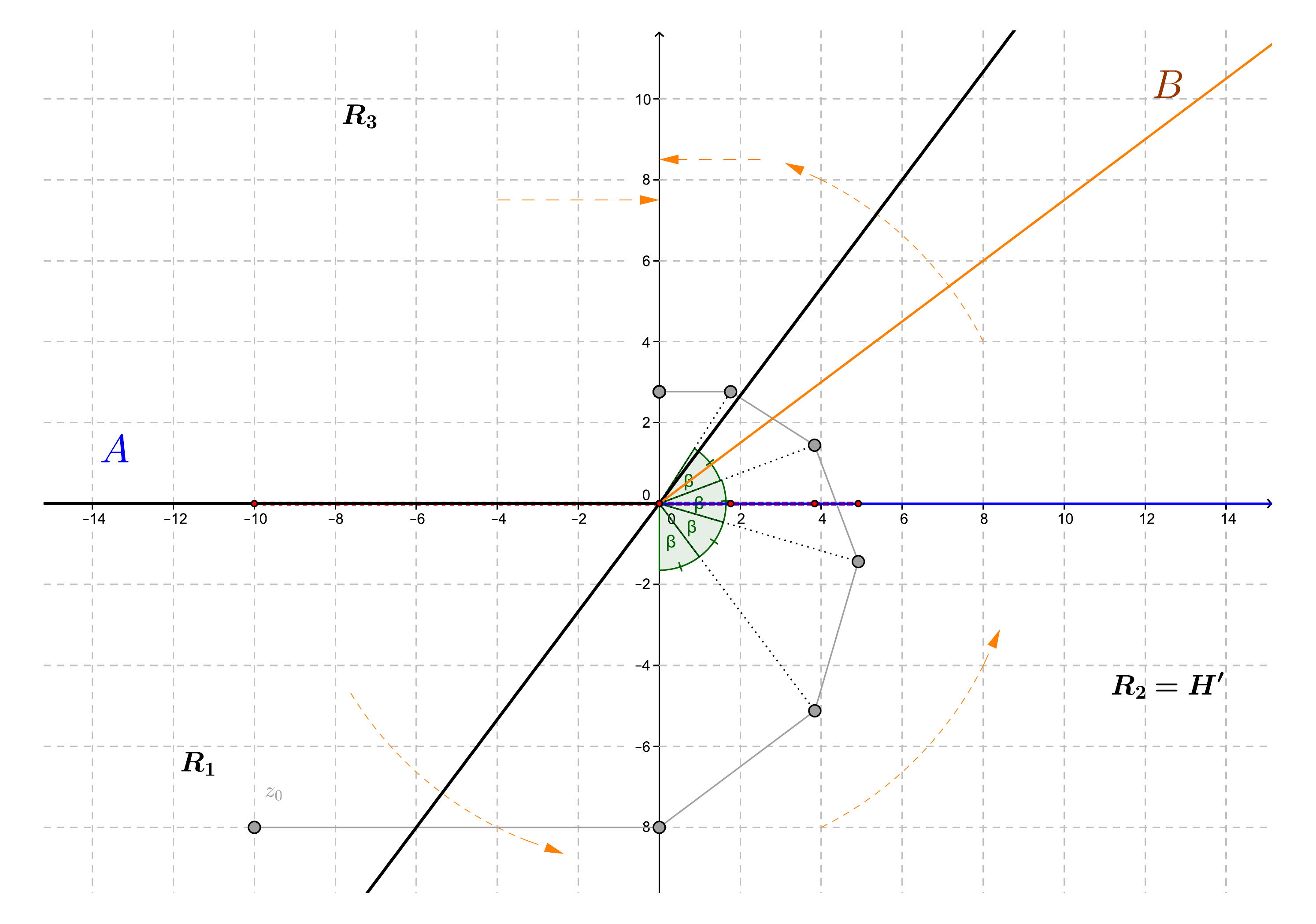}
\includegraphics[width =0.48\columnwidth]{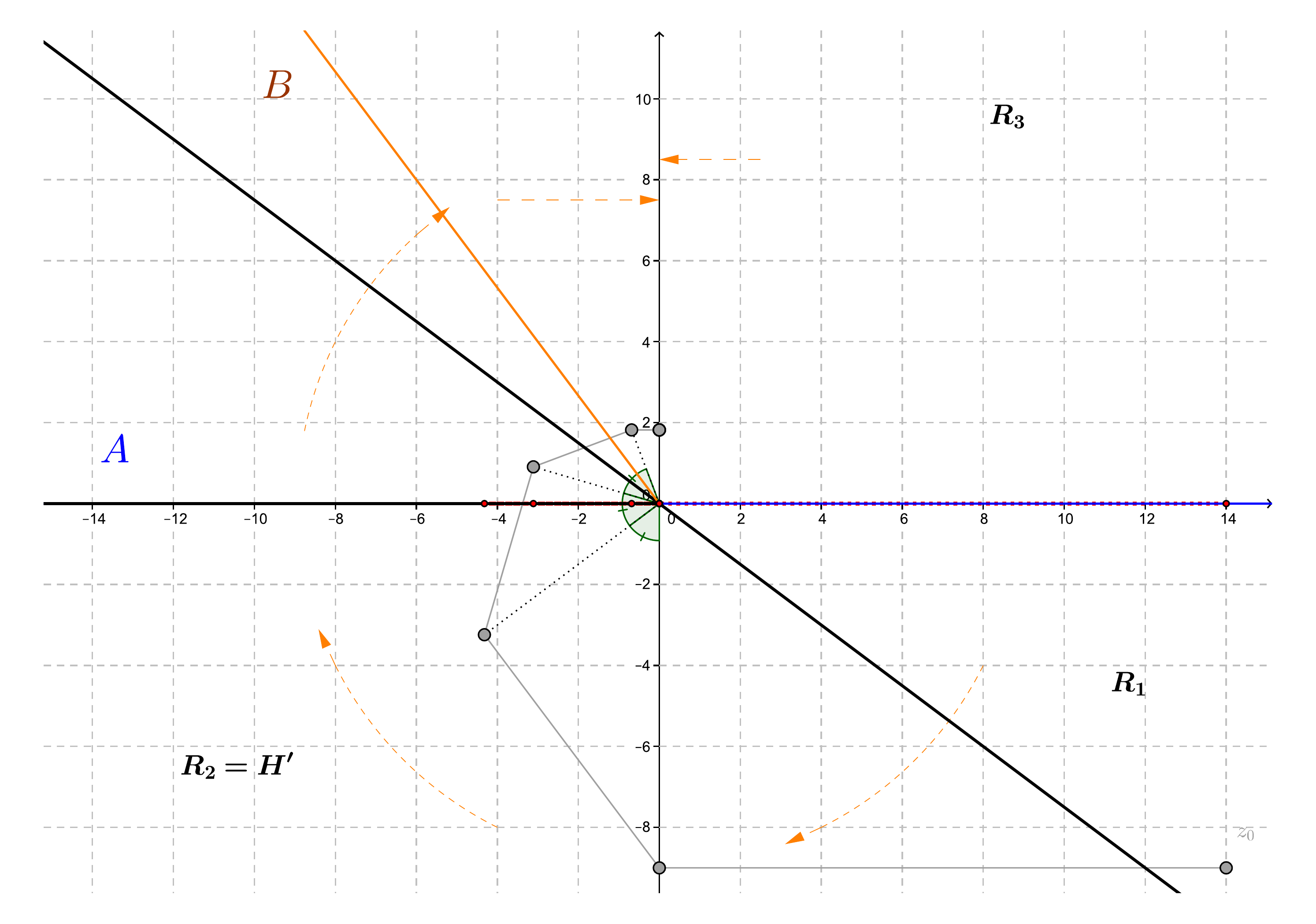}
\caption{The DRA for the case of a line and a ray in the Euclidean plane}
\label{fg:ray}
\end{figure}

Since $\theta \in [0, \pi]$, we have $0\times \RP \subseteq H$, and so $\{0\}\times \RM \subseteq H'$.
Set $x_0 :=(\alpha_0, \beta_0) \in X$.
According to \cref{l:raystep}, if $x_0 \in R_1$, then $x_1 =(0, \beta_0) \in \{0\}\times \RM \subseteq H'$;
if $x_0 \in R_3$, then $x_1 =(0, \beta_0) \in 0\times \RP \subseteq\Fix T_{A,B}$. 
So it is sufficient to consider the case $x_0 \in H' =R_2$.  
If $\theta =\pi/2$, we have immediately $x_1 =0 \in A\cap B$. 
Now we assume without loss of generality that $\theta <\pi/2$. 
Then, \eqref{e:raystep} yields the implication
\begin{equation}
x_0, \dots, x_{n-1} \in R_2 \quad\Rightarrow\quad \angle(x_0, x_n) =n\theta.
\end{equation}
There thus exists $n_0 \in \NN$ such that
\begin{equation}
x_0, \dots, x_{n_0-1} \in R_2, \quad\text{and}\quad x_{n_0} \not\in R_2,
\end{equation} 
which yields $x_{n_0} \in R_3$. 
Using again \cref{l:raystep}, $x_{n_0+1} =(0, \beta_{n_0}) \in 0\times \RP \subseteq\Fix T$.
Noting that 
\begin{equation}
\angle(x_0, x_{n_0}) =n_0\theta \leq \pi +\theta,
\end{equation}
we get $n_0 \leq \lfloor \pi/\theta \rfloor +1$.
Hence, $x_n =x \in \Fix T_{A,B}$ and $P_Ax_n =P_Ax \in A\cap B$ for all $n \geq \lfloor \pi/\theta \rfloor +3$ iterations.
\end{proof}

\begin{lemma}
\label{l:cone}
Let either $A =\RR\times \{0\}$ or $A =\RR\times \RM$, and let $B$ be the convex cone generated by the union of the rays 
\begin{equation}
B_1 =\mathcal R_{\theta_1}(\RP\times \{0\}) \quad\text{and}\quad B_2 =\mathcal R_{\theta_2}(\RP\times \{0\})
\end{equation}
with $\theta_1, \theta_2 \in [0, \pi]$.
Then the DRA applied to $(A, B)$ converges \emph{finitely globally uniformly}
in the sense that there exists $N \in \NN$ such that $(\forall x \in X)$ the sequence $(T_{A,B}^nx)_\nnn$ 
converges to a point in $\Fix T_{A,B}$ in at most $N$ iterations. 
\end{lemma}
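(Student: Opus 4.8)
The plan is to analyse the Douglas--Rachford dynamics directly and geometrically, the only external inputs being the one--step description \cref{l:raystep} and the (line, ray) finiteness result \cref{l:ray}. First I would dispose of the degenerate configurations. If $\theta_1=\theta_2$ then $B$ is a single ray; if $\{\theta_1,\theta_2\}=\{0,\pi\}$ then $B=\RR\times\{0\}$, which coincides with $A$ when $A$ is a line (so $T_{A,B}=\Id$) and, when $A=\RR\times\RM$, makes $T_{A,B}$ converge in at most one step by a direct computation (on the lower halfspace $T_{A,B}z=P_Bz\in A\cap B$, and on the open upper halfspace $\RR\times\RPP$ one has $z\in\Fix T_{A,B}$). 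Next I would reduce the case $A=\RR\times\RM$ to the case of the hyperplane $H:=\RR\times\{0\}$: whenever the current iterate has nonpositive second coordinate it lies in $A$, so the next iterate is $P_Bz\in B\subseteq\RR\times\RP$, whereas whenever it has positive second coordinate $P_A$ and $R_A$ agree with $P_H$ and $R_H$, so the step is $T_{H,B}$; since $B\subseteq\RR\times\RP$, a reflection step $z\mapsto R_Hz$ can only be taken from the lower halfspace, so from the upper halfspace a $T_{H,B}$ step stays in $\RR\times\RP$ unless it is a pure rotation--and--shrink, and a uniform bound for $(H,B)$ then yields one for $(A,B)$ at the cost of at most one projection step inserted whenever the $(H,B)$--orbit drops into the lower halfspace. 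Thus it remains to treat $A=\RR\times\{0\}$ with $B$ a proper convex wedge of apex $0$, with bounding rays $B_1,B_2$ in the closed upper halfplane and opening angle $\omega\in{]0,\pi[}$; write $B=C_1\cap C_2$ with $C_i$ the closed halfplane through $0$ determined by $B_i\subseteq\bd C_i$, $B\subseteq C_i$.

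For this case the starting point is the identity $T_{A,B}x=(0,x_2)+P_B(x_1,-x_2)$ for $x=(x_1,x_2)$, together with the partition of the plane into the four regions on which $P_B$ is given by a single formula: the wedge $B$ itself ($P_B=\Id$), the polar cone $-B^\oplus$ ($P_B=0$), and the two ``side'' regions $S_1,S_2$ of the rays ($P_B=P_{B_i}$, since $P_{B_i}$ also agrees with $P_B$ on $S_i$ and vanishes on $-B^\oplus$). Correspondingly one DRA step equals $x\mapsto R_Ax$ when $R_Ax\in B$, equals $x\mapsto(0,x_2)$ when $R_Ax\in-B^\oplus$, and equals $x\mapsto T_{A,B_i}x$ when $R_Ax\in S_i$ (because $T_{A,B_i}x=(0,x_2)+P_{B_i}R_Ax$). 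I would also record that $\{0\}\times\RP\subseteq\Fix T_{A,B}$: for $\beta\ge0$ the point $(0,-\beta)$ lies in $-B^\oplus$ since $B$ lies in the closed upper halfplane, hence $P_BR_A(0,\beta)=0$ and $T_{A,B}(0,\beta)=(0,\beta)$.

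With these in hand I would trace the orbit through the regions. In the polar--cone regime the iterate jumps onto $\{0\}\times\RR$; from $\{0\}\times\RP$ we are done, while from $\{0\}\times\RMM$ the reflected point lands one step later on the positive $y$--axis, so depending on whether that axis meets $B$ we either reach $\{0\}\times\RP$ or, within one further step, enter a side--region regime. A side--region regime at $S_i$ is exactly a (line, $B_i$)--ray iteration: by \cref{l:raystep} it consists of the rotation--and--shrink $x\mapsto(\cos\theta_i)\mathcal R_{\theta_i}x$ while the iterate stays in the halfplane $R_A(B_i^\oplus)$, and the angle--increment estimate in the proof of \cref{l:ray} shows this lasts at most $\lfloor\pi/\min\{\theta_i,\pi-\theta_i\}\rfloor+1$ steps before the iterate leaves that halfplane (onto $\{0\}\times\RR$, into $B$, or into the other side region); if the $S_i$--regime persists until convergence, \cref{l:ray} bounds the total number of steps by the explicit constant $N_i$ of that lemma. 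The reflection step $x\mapsto R_Ax$ can only occur from the lower halfplane (as $B\subseteq\RR\times\RP$) and sends the iterate into $B$, i.e.\ into a new side--region or polar--cone configuration. Putting these together, the orbit decomposes into a concatenation of ``episodes'', each of uniformly bounded length, and all the bounds assemble into a single $N$; the $A=\RR\times\RM$ case follows from the reduction of the first paragraph.

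The main obstacle is precisely the bookkeeping of the previous paragraph: one must show that the orbit cannot pass through more than a uniformly bounded number of episodes --- in particular that it cannot shuttle between $S_1$ and $S_2$, or between a side region and the reflection regime, arbitrarily often. Here the scale invariance of the configuration (all of $A$, $B$, $-B^\oplus$, $S_1$, $S_2$, $C_1$, $C_2$ are cones with vertex $0$), together with the strict inward character of each rotation--and--shrink (valid because $\theta_i\in{]0,\pi[}$ in the nondegenerate case), is what closes the argument, but turning this into a clean uniform count is delicate. Equivalently, the difficulty is concentrated at the apex $0\in A\cap B$, where $B$ is locally identical to none of $C_1,C_2,B_1,B_2$, so that the modification results \cref{l:expand}, \cref{l:adjust} and \cref{t:finite} --- which would reduce everything to the halfplane or ray cases if the shadow limit lay in the relative interior of a bounding ray --- do not apply, and $\{0\}\times\RP$ must be reached by the direct tracing above.
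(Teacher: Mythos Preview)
Your overall architecture---partition the plane by the four $P_B$--regimes of $R_Ax$ and trace the orbit through the pieces---is exactly the paper's, and indeed your four regions coincide with the paper's five regions $R_1,\dots,R_5$ (your ``$R_Ax\in B$'' region is $R_1\cup R_5$, your ``$R_Ax\in -B^\oplus$'' region is $R_3$, and your $R_A(S_1),R_A(S_2)$ are $R_2,R_4$). There is one slip: when $R_Ax\in B$ the DRA step is $x\mapsto P_Ax=(x_1,0)$, not $x\mapsto R_Ax$; this lands on the $x$--axis, not in $B$, so your sentence ``sends the iterate into $B$'' is wrong, though the subsequent tracing is unaffected once corrected.

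The substantive gap is the one you flag yourself: you do not show that the orbit visits only boundedly many episodes. This is not merely bookkeeping---it is the whole content of the lemma, and your argument as written stops short of it. The paper closes it by splitting into the two cases $0\le\theta_1<\pi/2<\theta_2\le\pi$ and $0\le\theta_1<\theta_2\le\pi/2$, and in each case verifying an explicit \emph{acyclic} transition diagram. The mechanism is purely angular: each side region has angular width exactly $\pi/2$, and the corresponding rotation step has size $\min\{\theta_i,\pi-\theta_i\}\le\pi/2$, so on exiting a side region the orbit cannot overshoot the adjacent polar--cone region. Concretely, in the first case one gets $R_1\to R_2\to R_3\to\Fix$ and $R_5\to R_4\to R_3\to\Fix$ (no $S_1\leftrightarrow S_2$ transitions at all), while in the second case the only possible chain is $R_1\to R_2\to R_3\text{ or }R_4$, $R_3\to R_4\to R_5\to\Fix$ (at most one $S_2\to S_1$ transition, never the reverse). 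This yields the explicit bounds $N=\max\{\lfloor\pi/(2\theta_1)\rfloor,\lfloor\pi/(2(\pi-\theta_2))\rfloor\}+3$ and $N=\lfloor\pi/(2\theta_1)\rfloor+\lfloor\pi/(2\theta_2)\rfloor+5$, respectively. Your concern about infinite shuttling is thus legitimate in the abstract but does not materialise once the case split is made; without it, your outline is not a proof.

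Your reduction of the halfspace case to the hyperplane case is also too loose: inserting ``at most one projection step'' each time the $(H,B)$--orbit drops below the axis does not obviously yield a uniform bound, since you have not bounded how many such drops occur. The paper simply repeats the region analysis for $A=\RR\times\RM$ directly (``the other case being similar''), which avoids this issue.
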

\begin{proof}
We shall prove this for the case $A =\RR\times \{0\}$, the other case being similar.
For $i \in \{1, 2\}$, set $H_i =B_i^\oplus$, $H_i' =R_A(H_i)$, $B_i' =R_A(B_i)$, 
and let $B_1'' =\mathcal R_{\pi/2}(B_1')$, $B_2'' =\mathcal R_{\pi/2}^{-1}(B_2')$.
Without loss of generality, we distinguish two cases: 
$0 \leq \theta_1 <\pi/2 <\theta_2 \leq \pi$ or $0 \leq \theta_1 <\theta_2 \leq \pi/2$.

\begin{figure}[!ht]
\centering
\includegraphics[width =0.48\columnwidth]{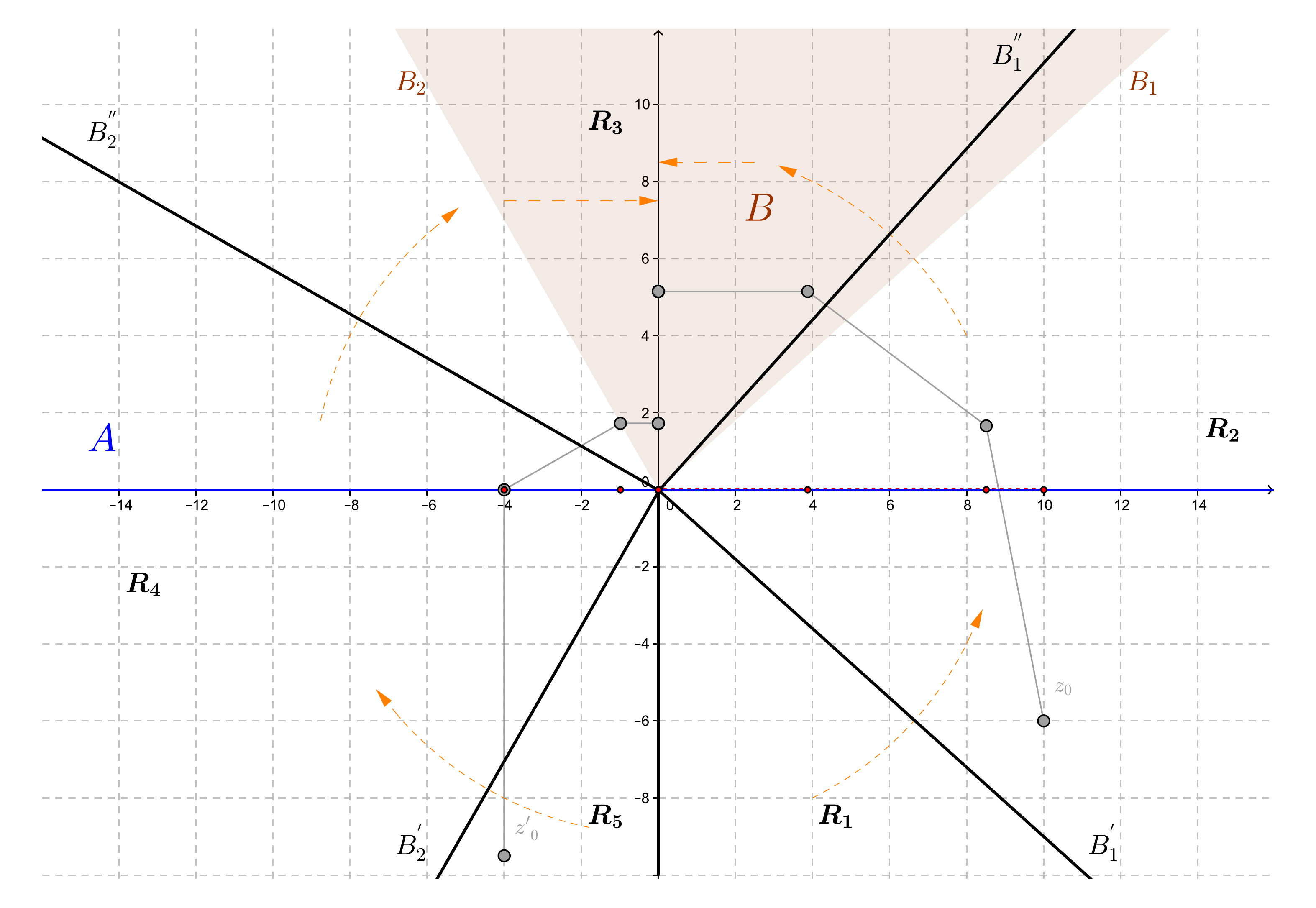}
\includegraphics[width =0.48\columnwidth]{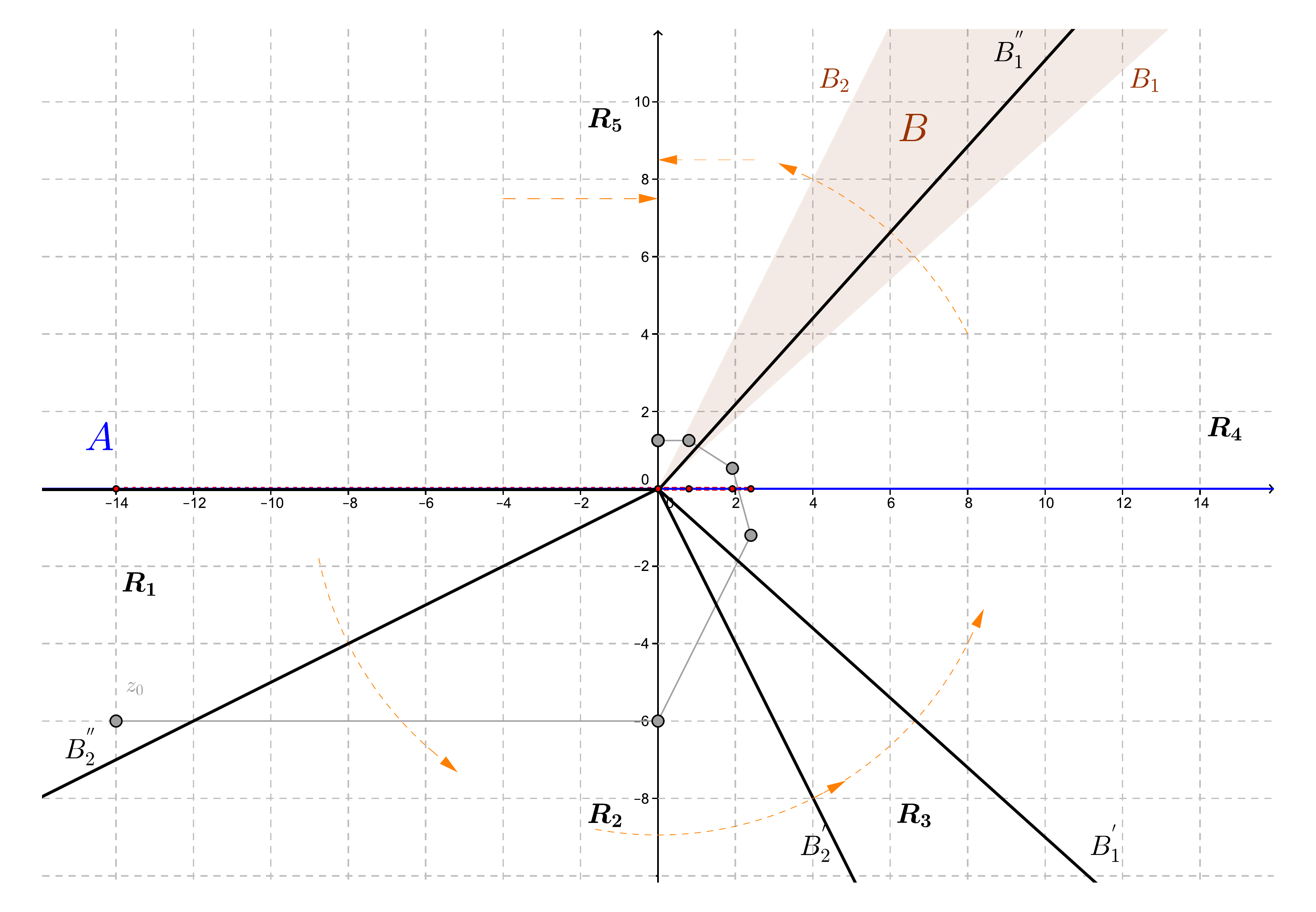}
\caption{The DRA for the case of a line and a cone in the Euclidean plane}
\label{fg:cone}
\end{figure}

\emph{Case 1:} $0 \leq \theta_1 <\pi/2 <\theta_2 \leq \pi$. 
As shown in the left image in Figure~\ref{fg:ray}, 
we study the behavior of the iterations in regions
\begin{subequations}
\begin{align}
R_1 &=\cone(\{0\}\times \RM\cup B_1') =R_A(B)\cap (\RP\times \RR), \\ 
R_2 &=\cone(B_1'\cup B_1'') \subseteq H_1'\smallsetminus R_A(B), \\
R_3 &=\cone(B_1''\cup B_2''), \\
R_4 &=\cone(B_2''\cup B_2') \subseteq H_2'\smallsetminus R_A(B), \\
R_5 &=\cone(B_2'\cup \{0\}\times \RM) =R_A(B)\cap (\RM\times \RR).
\end{align}
\end{subequations}
Set $x_0 :=(\alpha_0, \beta_0)$.

\emph{Case 1.1:} $x_0 \in R_1\cup R_5$. 
Then $P_Ax_0 =(\alpha_0, 0)$, and $R_Ax_0 =(\alpha_0, -\beta_0) \in B =R_A(R_1\cup R_5)$, so
\begin{equation}
x_1 =(\Id -P_A +P_BR_A)x_0 =(\alpha_0, \beta_0) -(\alpha_0, 0) +(\alpha_0, -\beta_0) =(\alpha_0, 0) \in R_2\cup R_4.
\end{equation} 

\emph{Case 1.2:} $x_0 \in R_2$. 
Then $x_0 \in H_1'\smallsetminus R_A(B)$, and $R_Ax_0 \in H_1\smallsetminus B$.
We also see that $R_Ax_0$ belongs to the halfspace with boundary $\lspan B_1$ and not containing $B_2$.
Thus, $P_BR_Ax_0 =P_{B_1}R_Ax_0$, and
\begin{equation}
x_1 =T_{A,B}x_0 =T_{A,B_1}x_0.
\end{equation} 
Using \cref{l:raystep}, this implies
\begin{equation}
x_0, \dots, x_{n-1} \in R_2 \quad\Rightarrow\quad \angle(x_0, x_n) =n\theta_1.
\end{equation}
Therefore, as in the proof of \cref{l:ray}, 
there exists $n_0 \in \NN$, $n_0 \leq \lfloor \pi/(2\theta_1) \rfloor +1$ such that $x_{n_0} \in R_3$.

\emph{Case 1.3:} $x_0 \in R_4$. 
By an argument similar to the above, 
we have $x_{n_0} \in R_3$ for some $n_0 \in \NN$, $n_0 \leq \lfloor \pi/(2\pi -2\theta_2) \rfloor +1$. 

\emph{Case 1.4:} $x_0 =(\alpha_0, \beta_0) \in R_3$. 
Then $\beta_0 \geq 0$ and $R_Ax_0 \not\in H_1\cup H_2$ since $R_3 \not\subseteq H_1'\cup H_2'$.
Therefore, $P_BR_Ax_0 =(0, 0)$, and 
\begin{equation}
x_1 =(\alpha_0, \beta_0) -(\alpha_0, 0) +(0, 0) =(0, \beta_0) \in \{0\}\times \RP \subseteq \Fix T_{A,B}.
\end{equation}
Hence, in all cases, there exists $n_1 \in \NN$ such that 
\begin{equation}
n_1 \leq N :=\max\left\{ \left\lfloor \frac{\pi}{2\theta_1} \right\rfloor, \left\lfloor \frac{\pi}{2(\pi -\theta_2)} \right\rfloor \right\} +3,
\end{equation}
and $x_{n_1} \in \Fix T_{A,B}$. 
This shows that $x_n \to x_{n_1} \in \Fix T_{A,B}$ in at most $N$ iterations.

\emph{Case 2:} $0 \leq \theta_1 <\theta_2 \leq \pi/2$. 
Partitioning
\begin{subequations}
\begin{align}
R_1 &=\cone(B_1''\cup B_2'')\cap (\RR\times \RM), \\ 
R_2 &=\cone(B_2''\cup B_2') \subseteq H_2'\smallsetminus R_A(B), \\
R_3 &=\cone(B_2'\cup B_1') =R_A(B), \\
R_4 &=\cone(B_1'\cup B_1'') \subseteq H_1'\smallsetminus R_A(B), \\
R_5 &=\cone(B_1''\cup B_2'')\cap (\RR\times \RP) 
\end{align}
\end{subequations} 
(see the right image in Figure~\ref{fg:cone}) and arguing as in the above case, 
we obtain that $x_n \to x \in \Fix T_{A,B}$ 
in at most $N$ iterations, where
\begin{equation}
N :=\left\lfloor \frac{\pi}{2\theta_1} \right\rfloor +\left\lfloor \frac{\pi}{2\theta_2} \right\rfloor +5.
\end{equation}
The proof is complete.
\end{proof}

\begin{remark}
\label{r:cone}
By the same argument, \cref{l:cone} also remains true when $\theta_1, \theta_2 \in [-\frac{\pi}{2}, \frac{\pi}{2}]$.
\end{remark}

\begin{theorem}
\label{t:poly2D}
Let $A$ be either a line or a halfplane, and $B$ be a closed convex set in the Euclidean plane $\RR^2$. 
Suppose that $A\cap B \neq\varnothing$, and that $B$ is locally identical with some polyhedral set around $A\cap \bd B$.
Then every DRA sequence $(x_n)_\nnn$ with respect to $(A, B)$ 
converges \emph{finitely} to a point $x \in \Fix T_{A,B}$ with $P_Ax \in A\cap B$.
\end{theorem}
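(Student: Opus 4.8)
The plan is to run the pair $(A,B)$ through the general convex convergence theory, then localize via the lemmas of Sections~\ref{s:lis} and~\ref{s:expand}, and finally reduce to the ``line/half\nobreakdash-plane versus cone'' situations analysed in \cref{l:ray} and \cref{l:cone}. Since $A$ and $B$ are closed convex with $A\cap B\neq\varnothing$, \cref{f:cvg}\ref{f:cvg_FixT} already yields that every DRA sequence $(x_n)_\nnn$ converges to some $x\in\Fix T_{A,B}$ with $c:=P_Ax\in A\cap B$; hence only the \emph{finiteness} of the convergence is at stake.

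The key step is to replace $B$, near the limit point $c$, by a set on which the DRA is understood. If $c\in\inte B$, then $B$ is locally identical to $\RR^2$ around $c$. If $c\in A\cap\bd B$, then by hypothesis $B$ is locally identical around $c$ to a polyhedral set, which in turn coincides near $c$ with a translate $c+K$ of its tangent cone $K$ at $c$; in $\RR^2$ a polyhedral cone is $\{0\}$, a ray, a line, a wedge, a half\nobreakdash-plane, or $\RR^2$, and since $B$ is convex with $c\in B$ (and, using $\inte B\neq\varnothing$, has interior points arbitrarily close to $c$), $K$ is full\nobreakdash-dimensional, i.e.\ a wedge or a half\nobreakdash-plane. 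In every case $B$ is locally identical around $c$ to a \emph{closed convex} set $B'$ that is $\RR^2$, a translated half\nobreakdash-plane, or a translated wedge/ray/line with apex $c$. Applying \cref{l:adjust} to $(A,B)$ with $A':=A$ (trivially locally identical to itself around $c$) and this $B'$, we obtain $n_0\in\NN$ with $x_{n+1}=T_{A,B'}x_n$ for all $n\geq n_0$. So it suffices to prove that the DRA for $(A,B')$ converges finitely.

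Translating by $-c$, that is a DRA sequence for $(A-c,\,B'-c)$, where $A-c$ is a line through $0$, a half\nobreakdash-plane whose boundary passes through $0$, or all of $\RR^2$ (the last exactly when $A$ is a half\nobreakdash-plane and $c\in\inte A$), and $B'-c$ is $\RR^2$, a half\nobreakdash-plane through $0$, or a wedge/ray/line with apex $0$. I then dispatch the cases: if $A-c=\RR^2$ or $B'-c=\RR^2$, one of $P_{A-c},P_{B'-c}$ is the identity, so $T_{A-c,B'-c}$ reduces to a single idempotent projector (up to a sign) and the sequence is constant after one further step; if $B'-c$ is a half\nobreakdash-plane I invoke \cref{t:H-H}; and if $B'-c$ is a wedge, ray, or line with apex $0$, then after an orthogonal change of variables bringing $A-c$ into the form $\RR\times\{0\}$ or $\RR\times\RM$ I invoke \cref{l:ray} (ray case) or \cref{l:cone} together with \cref{r:cone} (wedge/line case), each of which gives finite --- indeed uniformly finite --- convergence. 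Undoing the translation, $(x_n)_\nnn$ converges finitely, and by the first paragraph its limit lies in $\Fix T_{A,B}$ with $P_Ax\in A\cap B$.

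The main obstacle is the last dispatch: arranging the relative position of the line/half\nobreakdash-plane $A-c$ and the polyhedral cone $B'-c$ so that one of the two configurations in \cref{l:cone} applies (after a reflection fixing $A-c$, and in the broadened angular range of \cref{r:cone}) --- a finite but somewhat delicate plane\nobreakdash-geometry case check. Secondary points requiring care are the verification that $B'$ is genuinely closed, convex, and locally identical to $B$ around $c=P_Ax$ (so that \cref{l:adjust} is legitimate), the role of $\inte B\neq\varnothing$ in ruling out degenerate one\nobreakdash-dimensional tangent behaviour, and the bookkeeping of the low\nobreakdash-dimensional corner cases.
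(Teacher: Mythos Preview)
Your approach parallels the paper's: both localize near $c=P_Ax$ and reduce to the building blocks \cref{t:H-H} (halfplane target) and \cref{l:cone}/\cref{r:cone} (wedge target). The paper first invokes \cref{t:finite} to assume $B$ itself is polyhedral and then splits on whether $c$ is a vertex of $B$; if not, $B$ is locally a halfplane $B_j$ and \cref{l:expand} plus \cref{t:H-H} finish; if yes, $B$ is locally a translated wedge $C_j$ and \cref{l:expand} plus \cref{l:cone} finish. You bypass the \cref{t:finite} reduction and go straight to the tangent-cone model $B'=c+K$ via \cref{l:adjust}; this is a legitimate and slightly more direct route to the same endgame.

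There is, however, one concrete slip. You apply \cref{l:adjust} with $A':=A$ and then assert that $A-c$ can be ``all of $\RR^2$'' when $A$ is a halfplane with $c\in\inte A$. That is false: $A-c$ is still a halfplane, merely one containing $0$ in its interior. In that configuration, with $B'-c$ a proper wedge, neither \cref{l:cone} nor \cref{r:cone} applies (both require the line, or the boundary of the halfplane, to pass through the cone's apex), so this case is not actually covered by your dispatch. The fix is the one you seem to intend: in the \cref{l:adjust} step also localize $A$, setting $A':=\RR^2$ when $c\in\inte A$ (which is locally identical to $A$ there by \cref{l:id}\ref{l:id_int}); then $T_{A',B'}=P_{B'}$ is idempotent and the sequence stabilizes after one further step. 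Separately, your appeal to $\inte B\neq\varnothing$ is not among the theorem's stated hypotheses, though the section's opening paragraph indicates this is the intended regime.
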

\begin{proof}
Using \cref{t:finite}, it suffices to prove for the case where $B$ is a polyhedral set in $\RR^2$ satisfying $A\cap B \neq\varnothing$.
Then $B =\bigcap_{j \in J} B_j$ is a finite intersection of halfplanes $B_j$. 
Now by \cref{f:cvg}\ref{f:cvg_FixT}, $x_n \to x \in \Fix T_{A,B}$ with $P_Ax \in A\cap B =A\cap (\bigcap_{j \in J} B_j)$.

\emph{Case 1:} $P_Ax$ is not a vertex of $B$.  
Then there exists $j \in J$ such that $B$ and $B_j$ are locally identical around $P_Ax$.
Applying \cref{l:expand} for $A' =A$ and $B' =B_j$, we have 
\begin{equation}
(\exists n_0 \in \NN)(\forall n \geq n_0)\quad x_{n+1} =T_{A,B_j}x_n.
\end{equation}
Since $A$ is either a line or a halfplane, and $B_j$ is a halfplane in $\RR^2$, 
\cref{t:H-H} implies that $x_n \to x$ finitely.

\emph{Case 2:} $P_Ax$ is a vertex of $B$.
Noting that there are exactly two of halfplanes $B_j$ through each vertex of $B$,  
it can also represent $B =\bigcap_{j \in J} C_j$, where each $C_j$ is a closed convex cone in $\RR^2$.
We then find $j \in J$ such that $B$ and $C_j$ are locally identical around $P_Ax$.
By using again \cref{l:expand}, 
\begin{equation}
(\exists n_0 \in \NN)(\forall n \geq n_0)\quad x_{n+1} =T_{A,C_j}x_n.
\end{equation}
Here $A$ is either a line or a halfplane through vertex $P_Ax$ of the cone $C_j$.
Now apply \cref{l:cone} and \cref{r:cone}.
\end{proof}

\begin{example}
\label{ex:poly3D}
Suppose that $X =\RR^3$, that $A =\menge{x \in X}{Lx =a}$, and that $B =\RP^3$, 
where 
\begin{equation}
L =\begin{bmatrix}
1 & 1 & 0 \\
1 & 0 & 1 
\end{bmatrix} 
\quad\text{and}\quad
a =\begin{bmatrix} 1 \\ 0 \end{bmatrix}.
\end{equation}
Then for starting point $x_0 =(1/3, 2/3, 1/3) \in X$, 
the DRA sequence $(x_n)_\nnn$ with respect to $(A, B)$
converges $x_\infty =(1/3, 1, 1/3)$ with $P_Ax_\infty =(0, 1, 0) \in A\cap B$, 
but this convergence is not finite.
\end{example}
\begin{proof}
It is easy to see that $A =\menge{(-\lambda, \lambda+1, \lambda)}{\lambda \in \RR}$, and so 
\begin{equation}
A\cap B =\{(0, 1, 0)\}.
\end{equation}
Let $x =(\alpha, \beta, \gamma) \in X$.
Noting that the Moore--Penrose inverse of $L$ is given by
\begin{equation}
L^\dagger =\frac{1}{3}\begin{bmatrix}
1 & 1 \\ 
2 & -1 \\ 
-1 & 2 
\end{bmatrix},
\end{equation}
we learn from \cref{ex:proj_affine} that 
$P_Ax =x -L^\dagger(Lx -a)$, and so
\begin{equation}
\label{e:RAx}
R_Ax =2P_Ax -x =x -2L^\dagger(Lx -a) 
=\frac{1}{3}\left( \begin{bmatrix}
-1 & -2 & -2 \\ 
-2 & -1 & 2 \\ 
-2 & 2 & -1 
\end{bmatrix}x
+\begin{bmatrix}
2 \\
4 \\
-2 
\end{bmatrix} \right).
\end{equation}
By, e.g., \cite[Example~6.28]{BC11},
$P_Bx =(\max\{\alpha, 0\}, \max\{\beta, 0\}, \max\{\gamma, 0\})$,
and thus
\begin{equation}
\label{e:RBx}
R_Bx =(|\alpha|, |\beta|, |\gamma|).
\end{equation}
Setting $x_+ :=(\alpha_+, \beta_+, \gamma_+) =T_{A,B}x$,
we claim that if 
\begin{subequations}
\label{e:initial}
\begin{align}
\frac{2}{3} &\leq \alpha +\gamma, \\
-\frac{2}{3} &\leq \alpha -\gamma \leq \frac{2}{3}, \\
\frac{2}{3} &\leq \beta \leq \frac{4}{3}, 
\end{align}
\end{subequations} 
then $x_+ =\frac{1}{3}(Mx +b)$, where
\begin{equation}
M :=\begin{bmatrix}
2 & 1 & 1 \\ 
-1 & 1 & 1 \\ 
1 & -1 & 2 
\end{bmatrix}
\quad\text{and}\quad
b :=\begin{bmatrix}
-1 \\ 
2 \\ 
1 
\end{bmatrix},
\end{equation}
and \eqref{e:initial} also holds for $\alpha_+, \beta_+$ and $\gamma_+$.
Indeed, recall that 
\begin{equation}
R_Ax =\frac{1}{3}(-\alpha -2\beta -2\gamma +2, -2\alpha -\beta +2\gamma +4, -2\alpha +2\beta -\gamma -2).
\end{equation}
It follows from \eqref{e:initial} that $\alpha \geq 0$, $\gamma \geq 0$, and
\begin{subequations}
\begin{align}
&-\alpha -2\beta -2\gamma +2 \leq -(\alpha +\gamma) -2\beta +2 \leq -\frac{2}{3} -2\cdot \frac{2}{3} +2 =0, \\
&-2\alpha -\beta +2\gamma +4 =-2(\alpha -\gamma) -\beta +4 \geq -2\cdot \frac{2}{3} -\frac{4}{3} +4 =\frac{4}{3} >0, \\
&-2\alpha +2\beta -\gamma -2 \leq -(\alpha +\gamma) +2\beta -2 \leq -\frac{2}{3} +2\cdot \frac{4}{3} -2 =0.
\end{align}
\end{subequations}
By \eqref{e:RBx} and a direct computation,
\begin{equation}
x_+ =\frac{1}{2}(x +R_BR_Ax) =\frac{1}{3}(Mx +b),
\end{equation}
which means
\begin{subequations}
\begin{align}
\alpha_+ &=\frac{1}{3}(2\alpha +\beta +\gamma -1), \\ 
\beta_+ &=\frac{1}{3}(-\alpha +\beta +\gamma +2), \\ 
\gamma_+ &=\frac{1}{3}(\alpha -\beta +2\gamma +1).
\end{align}
\end{subequations}
Using again \eqref{e:initial} we get
\begin{subequations}
\begin{align}
\alpha_+ +\gamma_+ &=\alpha +\gamma \geq \frac{2}{3}, \\
-\frac{2}{3} <-\frac{4}{9} \leq \alpha_+ -\gamma_+ &=\frac{1}{3}((\alpha -\gamma) +2\beta -2) \leq \frac{4}{9} <\frac{2}{3}, \\
\frac{2}{3} \leq \beta_+ &=\frac{1}{3}(-(\alpha -\gamma) +\beta +2) \leq \frac{4}{3},
\end{align}
\end{subequations}
as claimed. 
Now let $x_0 =(1/3, 2/3, 1/3)$, the above claim implies that
\begin{equation}
(\forall\nnn)\quad x_{n+1} =T_{A,B}x_n =\frac{1}{3}(Mx_n +b).
\end{equation}
A direct argument yields
\begin{equation}
(\forall\nnn)\quad x_{n+3} =\frac{5}{3}x_{n+2} -x_{n+1} +\frac{1}{3}x_n,
\end{equation}
and then
\begin{equation}
x_n =\left( \frac{1}{3} -\frac{\sqrt{2}}{2}\frac{\sin(n\arctan\sqrt{2})}{3^{\frac{n}{2}+1}}, 
1 -\frac{\cos(n\arctan\sqrt{2})}{3^{\frac{n}{2}+1}}, 
\frac{1}{3} +\frac{\sqrt{2}}{2}\frac{\sin(n\arctan\sqrt{2})}{3^{\frac{n}{2}+1}} \right).
\end{equation}
Therefore, $x_n \to x_\infty =(1/3, 1, 1/3)$ linearly with rate $1/\sqrt{3}$, but not finitely.
\end{proof}

\section{Open problems}
\label{s:open}
We conclude with a list of specific open problems.
\begin{itemize}
\sepp 
\item[\textbf{P1}] 
Do the conclusions of \cref{f:P-E} and \cref{t:H-E} hold when $A$ is any hyperplane or halfspace?
\item[\textbf{P2}] 
Does \cref{p:H-B} remain true if $A$ is an affine subspace or a polyhedron?
\item[\textbf{P3}] 
Does \cref{c:H-Bs} remain true without assumption on the uniqueness?
\item[\textbf{P4}] 
Do \cref{c:P-B} and \cref{t:poly2D} remain true in $\RR^n$ with $n >2$?
\item[\textbf{P5}] 
Does \cref{f:A-P} remain true if we replace ``affine subspace'' by ``halfspace''?
\item[\textbf{P6}] 
What can be said about convergence of the DRA for two polyhedrons or for two balls?
\end{itemize}

\subsection*{Acknowledgments}
HHB was partially supported by the Natural Sciences and
Engineering Research Council of Canada 
and by the Canada Research Chair Program.
MND was partially supported by an NSERC accelerator grant of HHB.


\begin{thebibliography}{99}
\sepp 
\bibitem{AB13}
F.J.\ Arag\'on Artacho and J.M.\ Borwein, 
Global convergence of a nonconvex Douglas--Rachford iteration, 
\emph{Journal of Global Optimization}~57 (2013), 753--769.

\bibitem{ABT14}
F.J.\ Arag\'on Artacho, J.M.\ Borwein, and M.K.\ Tam, 
Recent results on Douglas--Rachford methods for combinatorial optimization problems,
\emph{Journal of Optimization Theory and Applications}~163 (2014), 1--30.

\bibitem{ABT15}
F.J.\ Arag\'on Artacho, J.M.\ Borwein, and M.K.\ Tam, 
Global behavior of the Douglas--Rachford method for a nonconvex feasibility problem, 
\emph{Journal of Global Optimization}, to appear. doi: \url{10.1007/s10898-015-0380-6}

\bibitem{BBNPW14}
H.H.\ Bauschke, J.Y.\ Bello Cruz, T.T.A.\ Nghia, H.M.\ Phan, and X.\ Wang,
The rate of linear convergence of the Douglas--Rachford algorithm for subspaces
is the cosine of the Friedrichs angle,
\emph{Journal of Approximation Theory}~185 (2014), 63--79.

\bibitem{BB96}
H.H.\ Bauschke and J.M.\ Borwein,
On projections algorithms for solving convex feasibility problems,
\emph{SIAM Review}~38 (1996), 367--426.

\bibitem{BC11}
H.H.\ Bauschke and P.L.\ Combettes,
\emph{Convex Analysis and Monotone Operator Theory in Hilbert Spaces},
Springer, 2011.

\bibitem{BCL04}
H.H.\ Bauschke, P.L.\ Combettes, and D.R.\ Luke, 
Finding best approximation pairs relative to two closed convex sets in Hilbert spaces,
\emph{Journal of Approximation Theory}~127 (2004), 178--192. 

\bibitem{BDM16}
H.H.\ Bauschke, M.N.\ Dao, and W.M.\ Moursi,
The Douglas--Rachford algorithm in the affine-convex case,
\emph{Operations Research Letters}~44 (2016), 379--382.

\bibitem{BDNP16}
H.H.\ Bauschke, M.N.\ Dao, D.\ Noll, and H.M.\ Phan,
Proximal point algorithm, Douglas--Rachford algorithm 
and alternating projections: a case study,
\emph{Journal of Convex Analysis}~23 (2016), 237--261.

\bibitem{BDNP15}
H.H.\ Bauschke, M.N.\ Dao, D.\ Noll, and H.M.\ Phan,
On Slater's condition and finite convergence of the Douglas--Rachford algorithm for solving convex feasibility problems in Euclidean spaces,
\emph{Journal of Global Optimization}, to appear. doi: \url{10.1007/s10898-015-0373-5}

\bibitem{BK04}
H.H.\ Bauschke and S.G.\ Kruk,
Reflection--projection method for convex feasibility problems with an obtuse cone,
\emph{Journal of Optimization Theory and Applications}~120 (2004), 503--531.

\bibitem{BM15}
H.H.\ Bauschke and W.M.\ Moursi,
On the order of the operators in the Douglas--Rachford algorithm,
\emph{Optimization Letters}~10 (2016), 447--455. 

\bibitem{BN14}
H.H.\ Bauschke and D.\ Noll,
On the local convergence of the Douglas--Rachford algorithm,
\emph{Archiv der Mathematik}~102 (2014), 589--600.

\bibitem{BNP15}
H.H.\ Bauschke, D.\ Noll, and H.M.\ Phan,
Linear and strong convergence of algorithms involving averaged nonexpansive operators, 
\emph{Journal of Mathematical Analysis and Applications}~421 (2015), 1--20.


\bibitem{Ben15}
J.\ Benoist,
The Douglas--Rachford algorithm for the case of the sphere and the line,
\emph{Journal of Global Optimization}~63 (2015), 363--380. 

\bibitem{BLT15}
J.M.\ Borwein, G.\ Li, and M.K.\ Tam,
Convergence rate analysis for averaged fixed point iterations in the presence of H\"older regularity,
\url{http://arxiv.org/abs/1510.06823}

\bibitem{BS11}
J.M.\ Borwein and B.\ Sims, 
The Douglas--Rachford algorithm in the absence of convexity,
In: \emph{Fixed-point Algorithms for Inverse Problems in Science and Engineering},
Springer, 2011, pp 93--109.

\bibitem{Cegielski}
A.\ Cegielski,
\emph{Iterative Methods for Fixed Point Problems in Hilbert
Spaces},
Springer, 2012. 

\bibitem{C3}
Y.\ Censor, W.\ Chen, P.L.\ Combettes,
R.\ Davidi, and G.T.\ Herman,
On the effectiveness of projection methods for convex feasibility
problems with linear inequality constraints,
\emph{Computational Optimization and Applications}~51 (2012),
1065--1088.

\bibitem{CenZen}
Y.\ Censor and S.A.\ Zenios,
\emph{Parallel Optimization},
Oxford University Press, 1997. 

\bibitem{Comb95}
P.L.\ Combettes,
The convex feasibility problem in image recovery,
\emph{Advances in Imaging and Electron Physics}~25 (1995),
155--270.

\bibitem{Comb97}
P.L.\ Combettes,
Hilbertian convex feasibility problems:
convergence of projection methods,
\emph{Applied Mathematics \& Optimization}~35 (1997),
311--330.

\bibitem{Comb04}
P.L.\ Combettes,
Solving monotone inclusions via compositions of
nonexpansive averaged operators, 
\emph{Optimization}~53 (2004), 475--504. 

\bibitem{DY15}
D.\ Davis and W.\ Yin,
Convergence rate analysis of several splitting schemes,
\url{http://arxiv.org/abs/1406.4834}

\bibitem{DR56}
J.\ Douglas and H.H.\ Rachford,
On the numerical solution of heat conduction problems
in two and three space variables,
\emph{Transactions of the AMS}~82 (1956), 421--439. 

\bibitem{EB92}
J.\ Eckstein and D.P.\ Bertsekas, 
On the Douglas--Rachford splitting method 
and the proximal point algorithm for maximal monotone operators, 
\emph{Mathematical Programming}~55 (1992), 293--318.

\bibitem{GGB}
\texttt{GeoGebra} software, \url{http://www.geogebra.org}

\bibitem{Gis15a}
P.\ Giselsson,
Tight linear convergence rate bounds for Douglas--Rachford splitting and ADMM,
\url{http://arxiv.org/abs/1503.00887}

\bibitem{Gis15b}
P.\ Giselsson,
Tight global linear convergence rate bounds for Douglas--Rachford splitting,
\url{http://arxiv.org/abs/1506.01556}

\bibitem{Gro77}
C.W.\ Groetsch, 
\emph{Generalized Inverses of Linear Operators: Representation and Approximation}, 
Marcel Dekker, New York, 1977.

\bibitem{Herman}
G.T.\ Herman,
\emph{Fundamentals of Computerized Tomography},
second edition, Springer, 2009.

\bibitem{HL13}
R.\ Hesse and D.R.\ Luke,
Nonconvex notions of regularity and convergence of fundamental
algorithms for feasibility problems,
\emph{SIAM Journal on Optimization}~23 (2013), 2397--2419. 

\bibitem{HLN14}
R.\ Hesse, D.R.\ Luke, and P.\ Neumann,
Alternating projections and Douglas--Rachford for sparse affine feasibility, 
\emph{IEEE Transactions on Signal Processing}~62 (2014), 4868--4881.

\bibitem{LP15}
G.\ Li and T.K.\ Pong,
Douglas--Rachford splitting for nonconvex optimization with application to nonconvex feasibility problems,
\emph{Mathematical Programming}, to appear. doi: \url{10.1007/s10107-015-0963-5}

\bibitem{LPFL15}
J.\ Liang, J.\ Fadili, G.\ Peyr\'e, and D.R.\ Luke,
Activity identification and local linear convergence of Douglas--Rachford/ADMM under partial smoothness,
in: \emph{Scale Space and Variational Methods in Computer Vision},
Springer, 2015, pp 642--653

\bibitem{LM79}
P.-L.\ Lions and B.\ Mercier,
Splitting algorithms for the sum of two nonlinear operators,
\emph{SIAM Journal on Numerical Analysis}~16 (1979), 964--979. 

\bibitem{PSB14}
P.\ Patrinos, L.\ Stella, and A.\ Bemporad,
Douglas--Rachford splitting: Complexity estimates and accelerated variants,
In: \emph{Proceedings of the 53rd IEEE Conference on Decision and Control}, 2014, pp 4234 - 4239.

\bibitem{Pha16}
H.M.\ Phan,
Linear convergence of the Douglas--Rachford method for two closed sets, 
\emph{Optimization}~65 (2016), 369--385.

\bibitem{Sva11}
B.F.\ Svaiter,
On weak convergence of the Douglas--Rachford method,
\emph{SIAM Journal on Control and Optimization}~49 (2011), 280--287.
\end{thebibliography}
\end{document}